\documentclass[11pt, draft, english]{article}

\usepackage[utf8]{inputenc}    
\usepackage[T1]{fontenc}       

\usepackage{amsmath}           
\usepackage{amssymb}          
\usepackage[amsmath,thmmarks,hyperref]{ntheorem} 

\usepackage{setspace}          
\usepackage{bbm}               
\usepackage{mathtools}         
\usepackage{mathrsfs}          
\usepackage[multiuser]{fixme}  
\usepackage[a4paper]{geometry} 
\usepackage[shortcuts]{extdash}
\usepackage{enumitem}
\usepackage{upgreek}
\usepackage{tikz-cd}
\usepackage{xcolor}
\usepackage{euflag}
\usepackage[final=true,        
           pdfpagelabels       
           ]{hyperref}         
\hypersetup{
    colorlinks,
    linkcolor={blue!50!black},
    citecolor={blue!50!black},
    urlcolor={blue!50!black}
}

\allowdisplaybreaks

\newcommand{\NN}{\mathbb{N}}
\newcommand{\RR}{\mathbb{R}}
\newcommand{\CC}{\mathbb{C}}

\newcommand{\KK}{\mathbbm{K}}

\newcommand{\algebra}[1]{\mathcal{#1}}
\newcommand{\A}{\algebra{A}}
\newcommand{\B}{\algebra{B}}
\newcommand{\C}{\algebra{C}}
\newcommand{\F}{\algebra{F}}
\renewcommand{\P}{\algebra{P}}
\newcommand{\lie}[1]{\mathfrak{#1}}
\newcommand{\Heisenberg}[1]{\lie g_{\textup{Hei},#1}}

\newcommand{\hermitian}{\mathrm h}
\newcommand{\mloc}{\mathrm{\textup{-}loc}}
\newcommand{\totloc}{{\Zentrum\mloc}}
\newcommand{\cat}{\mathrm{cat}}
\newcommand{\fin}{\mathrm{fin}}

\DeclareMathOperator{\ad}{ad}
\DeclareMathOperator{\RE}{Re}
\DeclareMathOperator{\IM}{Im}
\DeclareMathOperator{\GammaFun}{\Gamma} 
\DeclareMathOperator{\pr}{pr}
\DeclareMathOperator{\id}{id}

\newcommand{\Dom}{\mathcal{D}}
\newcommand{\Eom}{\mathcal{E}}
\newcommand{\Hilb}{\mathcal H}

\newcommand{\Schwartz}{\mathscr{S}}
\newcommand{\Weyl}[1]{\mathcal W_{#1}}
\newcommand{\Adbar}{\mathcal{L}^*}
\newcommand{\Univ}{\mathcal{U}}

\newcommand{\Zentrum}{\mathscr{Z}}
\newcommand{\WeylNil}{\mathcal N}
\newcommand{\dom}{\mathrm{dom}}
\newcommand{\domAna}{\mathrm{dom}^{\upomega}}
\newcommand{\starchar}{{\mathrm{char}^*}}

\DeclarePairedDelimiter{\ordinarySet}{\{}{\}}
\DeclarePairedDelimiter{\ordinaryIP}{\langle}{\rangle}
\DeclarePairedDelimiter{\abs}{|}{|}
\DeclarePairedDelimiter{\norm}{\|}{\|}
\newcommand{\set}[3][]{\ordinarySet[#1]{\,#2 \;#1|\; #3\,}}
\newcommand{\skal}[3][]{\ordinaryIP[#1]{\,#2 \,#1|\, #3\,}}
\newcommand{\kom}[3][]{\mathop{#1[}#2\mathbin{,}#3\mathop{#1]}}
\newcommand{\gen}[2][]{#1\langle\!#1\langle\,#2\,#1\rangle\!#1\rangle}
\newcommand{\genSAlg}[2][]{\gen[#1]{#2}_{*\textup{-}\mathrm{alg}}}

\newcommand{\I}{\mathrm{i}}
\newcommand{\D}{\mathrm{d}}
\newcommand{\argument}{\,\cdot\,}
\newcommand{\cc}[1]{\overline{#1}}
\newcommand{\at}[2][]{#1|_{#2}}
\newcommand{\SchroedNilGen}{\mathcal{B}}
\newcommand{\Ell}{\mathcal{L}}
\newcommand{\iotaUniv}[1]{\operatorname{ \iota^{\scriptscriptstyle\textup{can}}_{{#1\!}}}}
\newcommand{\iotaUnivC}[1]{\operatorname{ \iota^{\scriptscriptstyle\textup{can}}_{{#1,\CC}}}}
\newcommand{\iotaHeisenberg}{\operatorname{ \iota_{\textup{Hei}}}}

\newcommand{\theoremMainInternalTag}{$\clubsuit$}

\theoremheaderfont{\normalfont\bfseries}
\theorembodyfont{\itshape}
\newtheorem{lemma}{Lemma}
\newtheorem{proposition}[lemma]{Proposition}
\newtheorem{theorem}[lemma]{Theorem}
\newtheorem{corollary}[lemma]{Corollary}
\newtheorem{definition}[lemma]{Definition}
\newtheorem*{corollaryNN}{Corollary}

\theoremheaderfont{\normalfont\bfseries}
\theorembodyfont{\normalfont}
\newtheorem{example}[lemma]{Example}
\newtheorem{remark}[lemma]{Remark}

\theorembodyfont{\normalfont}
\makeatletter
\def\thmhead@plain#1#2#3{%
	\thmname{#1}\thmnumber{\@ifnotempty{#1}{ }\@upn{#2}}%
	\thmnote{ {\the\thm@notefont#3}}}
\let\thmhead\thmhead@plain
\makeatother

\theoremheaderfont{\scshape}
\theorembodyfont{\normalfont}
\theoremstyle{nonumberplain}
\theoremseparator{:}
\theoremsymbol{\hbox{$\square$}} 
\newtheorem{proof}{Proof}

\author{
    \textbf{Philipp Schmitt}\thanks{schmitt@math.uni-hannover.de}
    \\
    \and
    \textbf{Matthias Schötz}\thanks{matthias.schoetz@uni-wuerzburg.de}
    \\
}

\title{Real Nullstellensatz for nilpotent Lie algebras}

\begin{document}
	\maketitle
	
	\begin{abstract}
    We prove a real Nullstellensatz for the universal enveloping algebra $\Univ(\lie g)$
    of a nilpotent finite-dimensional real Lie algebra $\lie g$: An ideal of $\Univ(\lie g)$ is real if and only if
    it is the intersection of kernels of irreducible integrable $^*$\=/representations of $\Univ(\lie g)$ by
    (in general unbounded) operators on a Hilbert space. For this, we build upon some classical results about
    the complexification of $\Univ(\lie g)$
    that lead to a complex Nullstellensatz (every semiprime ideal is the intersection of primitive ideals),
    but focus only on the ``real points'', i.e., the $^*$\=/representations on Hilbert spaces. On the
    technical side, this requires to take into account an involution of $\Univ(\lie g)$, namely, the
    antipode of its Hopf algebra structure, which goes into the definition of real ideals of $\Univ(\lie g)$.
    Our result is the first noncommutative real Nullstellensatz for $^*$\=/algebras with unbounded irreducible
    $^*$\=/representations, demonstrating how methods of real algebraic geometry can lead to results about operator
    $^*$\=/algebras also in situations where $C^*$\=/algebraic methods fail.
	\end{abstract}

	{  \small
    \medskip
    \noindent\textbf{Keywords:} universal enveloping algebra, nilpotent Lie algebra, real ideal,
    noncommutative real algebraic geometry, real Nullstellensatz,
    $^*$\=/algebra, $^*$\=/representation, unbounded operators, Hopf algebra.

    \medskip
    \noindent\textbf{MSC 2020:} 14A22, 16D25, 16S30, 16W10, 17B30, 22E27, 47L60.
  }

	\tableofcontents

	\let\thefootnote\relax\footnotetext{Progress on this project has been made over several years at several locations.
	The first author acknowledges support from the EU Staff Exchange project 101086394 ``Operator Algebras That One Can See'',
	partially supported by the University of Warsaw Thematic Research Programme ``Quantum Symmetries''.
	The second author acknowledges support from the project COMPUTE, which is funded within the QuantERA II Programme that
	has received funding from the EU’s H2020 research and innovation programme under the GA No.~101017733 \euflag,
	and support by the Fondation de l’\'Ecole polytechnique as part of the Gaspard Monge Visiting Professor Program.
	Both authors would like to thank the organizers of the ``Math in the Mill'' workshop series in Sondheim vor der Rhön.}

\begin{onehalfspace}

\section{Introduction}
The natural numbers are denoted by $\NN \coloneqq \{1,2,3,\dots\}$ and $\NN_0 \coloneqq \NN \cup \{0\}$,
and the fields of real and complex numbers are denoted by $\RR$ and $\CC$, respectively.

A \emph{$^*$\=/algebra} is a unital associative algebra $\A$ over the field $\CC$ together with an
antilinear involution $\argument^* \colon \A \to \A$ such that $(ab)^* = b^* a^*$ for all $a,b\in \A$.
Its real linear subspace of \emph{hermitian} elements is $\A_\hermitian \coloneqq \set{a\in \A}{a^*=a}$.
A \emph{$^*$\=/subalgebra} of $\A$ is a complex linear subspace of $\A$ that is closed under multiplication,
stable under the $^*$\=/involution,
and contains the unit $1$ of $\A$; hence every $^*$\=/subalgebra is again a $^*$\=/algebra.
The $^*$\=/algebra $\A$ is called
\emph{formally real} if it has the following property:
whenever $a_1,\dots,a_n \in \A$ with $n\in \NN$ fulfill $\sum_{j=1}^n a_j^*a_j = 0$, then $a_1 = \dots = a_n = 0$.
Clearly, every $^*$\=/subalgebra of a formally real $^*$\=/algebra is again formally real.

For example, given any complex pre-Hilbert space $\Dom$ with inner product
$\skal{\argument}{\argument} \colon \Dom \times \Dom \to \CC$ (antilinear in the first, and linear in the second argument),
then the set $\Adbar(\Dom)$ of (linear) endomorphisms $a \colon \Dom \to \Dom$ that admit an
(algebraically) adjoint endomorphism $a^* \colon \Dom \to \Dom$,
i.e., $\skal{a^*\,\phi}{\psi} = \skal{\phi}{a\,\psi}$ for all $\phi,\psi \in \Dom$, is a formally real $^*$\=/algebra.
The unit of $\Adbar(\Dom)$ is the identity map $\id_\Dom \colon \Dom \to \Dom$.
The $^*$\=/subalgebras of $\Adbar(\Dom)$ are called the \emph{$O^*$\=/algebras over $\Dom$} (or operator $^*$\=/algebras).

A \emph{$^*$\=/algebra morphism} $\Phi \colon \A \to \B$ between two $^*$\=/algebras $\A$ and $\B$ is a linear map that fulfills
$\Phi(1) = 1$, $\Phi(aa') = \Phi(a)\,\Phi(a')$, and $\Phi(a^*) = \Phi(a)^*$ for all $a,a'\in \A$.
A \emph{$^*$\=/representation} of a $^*$\=/algebra $\A$ is a tuple $(\Dom,\pi)$ of a complex pre-Hilbert space $\Dom$ and a
$^*$\=/algebra morphism $\pi \colon \A \to \Adbar(\Dom)$, and we say that this $^*$\=/representation $(\Dom,\pi)$ is
\emph{faithful} if $\pi$ is injective.
So every $^*$\=/algebra that admits a faithful $^*$\=/representation is isomorphic to some $O^*$\=/algebra,
and therefore it must be formally real.

It is a mostly open problem to understand when the converse is also true.
Only few positive results are known. Namely, consider a formally real $^*$\=/algebra $\A$,
then there exists a faithful $^*$\=/representation of $\A$ in each of the following cases:

\begin{itemize}
  \item if $\A$ is a finitely generated commutative $^*$\=/algebra
    (this is the classical real Nullstellensatz for the polynomial algebra by Dubois \cite{dubois:nullstellensatzForOrderedFields} and Risler \cite{risler:nullstellensatz});
  \item if $\A$ is a finite-dimensional $^*$\=/algebra
    (this follows e.g.~from the Wedderburn--Artin structure theorems for semi-simple artinian rings,
    see \cite[Thm.~9.7.22]{palmer:banachAlgebrasAndTheGeneralTheoryOfStarAlgebras} for details);
  \item if $\A$ is a quotient of a $^*$\=/algebra of polynomials with matrix coefficients, \cite{cimpric:realAlgebraicGeometryForMatricesOverCommutativeRings}.
\end{itemize}
In all these cases, a faithful $^*$\=/representation can be obtained as a (possibly infinite)
direct sum of irreducible finite-dimensional $^*$\=/representations.
To the best of our knowledge, there are no positive general results known for situations where $^*$\=/representations on infinite-dimensional
spaces are required, let alone unbounded $^*$\=/representations, with the exception of our predecessor article \cite{schmitt.schoetz:realNullstellensatzForTwoStepNilpotentLieAlgebras}
treating the $2$-step nilpotent case. On the other hand, it is known that there are finitely generated
formally real $^*$\=/algebras that do not admit a faithful $^*$\=/representation, e.g., the $^*$\=/algebra generated by two elements
$a,x$ and their formal adjoints $a^*,x^*$, subject to the relations $xx^* + aa^* = 1$ and $a^*a = q aa^*$ for some
fixed number $q \in {]0,1[}$; see \cite[Prop.~1.8]{cimpricEtAl:realNullstellensatzGegenbeispiel}.

There are of course other results characterizing the existence of faithful $^*$\=/representations of certain types of $^*$\=/algebras
by other properties. E.g., the celebrated Gelfand--Naimark theorem guarantees that any $^*$\=/algebra that admits a $C^*$\=/norm has a
faithful $^*$\=/representation (by bounded operators), and \cite{schoetz:gelfandNaimarkTheorem} generalizes this to $\sigma$\=/bounded
$^*$\=/algebras that admit a certain closed partial order on the hermitian elements (but now unbounded $^*$\=/representations might be required).
Alternatively, \cite{popovych:onOStarRepresentabilityAndCStarRepresentabilityOfStarAlgebras}
gives a sufficient condition for the existence of a faithful $^*$\=/representation of a $^*$\=/algebra $\A$ via properties of a Gröbner basis of $\A$.
However, none of these provide a characterization by a simple algebraic property like being formally real.

Any $^*$\=/algebra $\A$ becomes a (complex) Lie algebra in a natural way, namely by endowing the vector space underlying $\A$
with the \emph{commutator bracket} $\kom{\argument}{\argument} \colon \A \times \A \to \A$,
\begin{equation}
  \label{eq:commutatorDef}
  (a,b) \mapsto \kom{a}{b} \coloneqq ab-ba.
\end{equation}
Note that $\kom{a}{b}^* = \kom{b^*}{a^*}$ for all $a,b\in \A$. By a \emph{Lie $^*$\=/subalgebra} of $\A$ we mean a
(complex) linear subspace $\lie g$ of $\A$ such that $a^* \in \lie g$ and $\kom{a}{b} \in \lie g$ for all $a,b\in \lie g$.

Recall that a finite-dimensional Lie algebra $\lie g$ is called \emph{nilpotent} if there exists $n\in \NN_0$ such that
$\ad_{a_1} \circ \cdots \circ \ad_{a_n} = 0$ for all $a_1,\dots,a_n \in \lie g$, where $\ad_a \colon \lie g \to \lie g$ is defined as
$\ad_ab \coloneqq \kom{a}{b}$ for all $a,b\in \lie g$. Our main motivation is to obtain the following:

\begin{corollaryNN}
  Let $\A$ be a $^*$\=/algebra that is generated by a finite-dimensional Lie $^*$\=/subalgebra $\lie g$ of $\A$.
  Assume $\lie g$ is nilpotent. Then the following are equivalent:
  \begin{enumerate}[ref=\textit{\roman*.)},label=\textit{\roman*.)}]
    \item $\A$ is formally real.
    \item There exists a faithful $^*$\=/representation of $\A$.
    \item \label{item:integrableFaithfulRep} There exists a faithful integrable $^*$\=/representation of $\A$.
  \end{enumerate}
\end{corollaryNN}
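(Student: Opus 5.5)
The implications iii.)\,$\Rightarrow$\,ii.)\,$\Rightarrow$\,i.) are immediate: an integrable $^*$\=/representation is in particular a $^*$\=/representation, and any $^*$\=/algebra with a faithful $^*$\=/representation is isomorphic to an $O^*$\=/algebra. Such an algebra is formally real because $\Adbar(\Dom)$ is: if $\sum_j a_j^*a_j = 0$, then $\sum_j \skal{a_j\phi}{a_j\phi} = 0$ for all $\phi\in\Dom$, hence every $a_j = 0$. So the real content is i.)\,$\Rightarrow$\,iii.). I plan to reduce this to the main theorem announced in the abstract, the real Nullstellensatz for $\Univ(\lie g_0)$ with $\lie g_0$ a real nilpotent Lie algebra. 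I will assume it in the form: every real ideal of $\Univ(\lie g_0)$ is the intersection of the kernels of irreducible integrable $^*$\=/representations.

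First I will realise $\A$ as a quotient of such an enveloping algebra. Set $\lie g_0 \coloneqq \set{x\in\lie g}{x^* = -x}$. Since $\kom{x}{y}^* = \kom{y^*}{x^*} = \kom{y}{x}\cdot 1 = -\kom{x}{y}$ for anti-hermitian $x,y$, this is a real Lie algebra. Moreover $\lie g = \lie g_0 \oplus \I\lie g_0$ is its complexification, and $\lie g_0$ is nilpotent because $\lie g$ is. The inclusion $\lie g_0 \to \A$ is a real Lie algebra morphism, so it extends to a unital algebra morphism $\Phi\colon \Univ(\lie g_0)\to\A$. Here $\Univ(\lie g_0)$ denotes the complex enveloping algebra with the $^*$\=/involution determined by $x^* = -x$ for $x\in\lie g_0$, which is the antipode combined with complex conjugation. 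Then $\Phi$ is a $^*$\=/morphism, since both sides are antilinear anti-multiplicative involutions that agree on generators. It is surjective because $\A$ is generated by $\lie g$. Hence $\A \cong \Univ(\lie g_0)/\ker\Phi$ as $^*$\=/algebras.

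Next I will check that $I\coloneqq\ker\Phi$ is a real ideal. Suppose $\sum_j a_j^*a_j \in I$. Then $\sum_j \Phi(a_j)^*\Phi(a_j) = 0$, so formal reality of $\A$ gives $\Phi(a_j)=0$, that is, $a_j\in I$ for all $j$. If the paper's definition of real ideal also involves hermitian summands, such as $\sum_j a_j^*a_j + b$ with $b\in I$, the same argument applies. The main theorem then yields a family $(\Dom_\alpha,\pi_\alpha)_\alpha$ of irreducible integrable $^*$\=/representations of $\Univ(\lie g_0)$ with $\bigcap_\alpha\ker\pi_\alpha = I$.

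Each $\pi_\alpha$ annihilates $I$, so it factors through $\A$. The representation $\pi\coloneqq\bigoplus_\alpha\pi_\alpha$ on the algebraic direct sum $\bigoplus_\alpha\Dom_\alpha$ is then a faithful $^*$\=/representation of $\A$. The remaining, and most delicate, step is checking that this direct sum is integrable in the paper's sense. I expect integrability to mean that $\pi$ arises as the derived representation of a unitary representation of the simply connected group with Lie algebra $\lie g_0$ on its smooth vectors, or something similar. The plan for this step is as follows. Take the Hilbert direct sum $U\coloneqq\bigoplus_\alpha U_\alpha$ of the corresponding unitary representations. Then check that the direct sum of derived representations is integrable, either by directly verifying the paper's definition or by passing to the appropriate closure or domain (for example, the smooth vectors of $U$). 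Passing to a larger domain does not affect faithfulness, since $\pi$ already restricts faithfully to the algebraic direct sum. If the paper's notion of integrability is not stable under arbitrary direct sums, I will instead replace the algebraic direct sum by the domain of smooth vectors of $U$, which is invariant and carries an integrable representation extending $\pi$.
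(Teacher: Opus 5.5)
Your argument is correct, but it is organised somewhat differently from the paper's.

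\textbf{Applying the Nullstellensatz.} The paper derives the corollary from Theorem~\ref{theorem:main} and Proposition~\ref{proposition:mainCorollary}. Since Definition~\ref{definition:realNullstellensatz} quantifies over all enveloping $^*$-algebras, the paper applies the Nullstellensatz directly to the real ideal $\{0\}$ of $(\A,\iota)$, with $\iota$ the inclusion $\lie g\hookrightarrow\A$. Your detour through $\Univ(\lie g_0)$ and the realness of $\ker\Phi$ is therefore not needed; it is just Remark~\ref{remark:envelopingquotient}, and it is harmless. Note that the statement you actually use is the complex one, Theorem~\ref{theorem:main} for $(\Univ(\lie g),\iotaUniv{\lie g})$. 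The literally real version over $\Univ(\lie g_0)$ with the antipode (Theorem~\ref{theorem:realUniv}, which is what the abstract describes) would not suffice, because $\ker\Phi$ need not be stable under conjugation; see the final example of the paper.

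\textbf{Integrability.} The paper keeps the algebraic direct sum $(\Eom,\rho)$. It notes that $\rho$ has a dense set of analytic vectors, since $\bigoplus_\alpha\domAna(\pi_\alpha,\iota)\subseteq\domAna(\rho,\iota)$. It then passes to the adjoint representation $(\dom(\rho^\dagger),\rho^\dagger)$ of Proposition~\ref{proposition:exponentiation}. This representation is integrable, is automatically a $^*$-representation of $\A$ itself, and extends $\rho$, hence is faithful. As you anticipated, some enlargement is needed, because the algebraic direct sum is in general not selfadjoint.

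Your alternative is $\D U$ on the smooth vectors $\Hilb^\infty(U)$ of the Hilbert sum $U=\bigoplus_\alpha U_\alpha$. This works too and avoids Proposition~\ref{proposition:exponentiation}, but it needs one check you skipped. $\D U$ must annihilate $\ker\Phi$ on all of $\Hilb^\infty(U)$, not only on the algebraic direct sum; otherwise it is a representation of $\Univ(\lie g)$ but not of $\A$. This does hold. The orthogonal projection $P_\alpha$ onto each summand commutes with $U$ and preserves smooth vectors, so $P_\alpha\,\D U(u)\,\phi=\pi_\alpha(u)\,P_\alpha\phi=0$ for all $u\in\ker\Phi$ and $\phi\in\Hilb^\infty(U)$.

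Integrability with respect to $(\A,\iota)$ then follows by the argument of Proposition~\ref{proposition:irredintstability}, since integrability depends only on the image of $\lie g$. You also need the equivalence of group integrability with the paper's analytic-vector criterion, which is Schmüdgen's Thm.~10.5.8 as quoted in Section~\ref{sec:integrability}.

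\textbf{Comparison.} The paper's route stays entirely inside $\A$ and needs no factorisation check, at the price of the nontrivial Proposition~\ref{proposition:exponentiation}. Your route is more transparent at the group level, but requires switching between the two notions of integrability and verifying that $\D U$ descends to $\A$.
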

In part~\ref{item:integrableFaithfulRep}, \emph{integrability} of a $^*$\=/representation $(\Dom,\pi)$
essentially means that $(\Dom,\pi)$ is the derivative of a strongly continuous unitary
representation of a (real) Lie group $G$ whose complexified Lie algebra is $\lie g$.
Such an additional regularity condition is desirable
because the $^*$\=/representations in question are typically not bounded, and unbounded $^*$\=/representations
in general may exhibit rather pathological behaviour. Details are discussed in Section~\ref{sec:integrability}.

We will obtain the above Corollary from our main Theorem~\ref{theorem:main},
which generalizes the real Nullstellensatz of real algebraic geometry from the real polynomial algebra to the universal
enveloping $^*$\=/algebra of a nilpotent finite-dimensional Lie $^*$\=/algebra (a complex Lie algebra with an antilinear
involution that provides a real form): every real ideal is the intersection of kernels of irreducible integrable
$^*$\=/representations.
This is a ``real'' analog of a well-known ``complex'' Nullstellensatz for the universal enveloping algebra of
finite-dimensional complex Lie algebras:
every semiprime ideal is the intersection of primitive ideals (see e.g.~\cite[Prop.~3.1.15]{dixmier:envelopingAlgebras}).
This purely algebraic result, however, does not allow one to draw the connection to $^*$\=/representations on Hilbert spaces.

The proof of  Theorem~\ref{theorem:main} combines methods of real algebraic geometry and operator theory.
Applications of functional analysis and operator theory to commutative and noncommutative real algebraic geometry
have been extensively studied over the last decades (most notably the various incarnations of the archimedean
Positivstellensatz \cite{krivine:positivstellensatz, jacobi:representationTheoremForPartiallyOrderedRings, helton.mcCullough:positivstellensatzForNonCommutativePolynomials}
and its applications to the moment problem, optimization, and quantum information theory,
e.g.~\cite{schmuedgen:KMomentProblemForCompactSemiAlgebraicSets, lasserre:globalOptimizationWithPolynomials, navascues.pironio.acin:convergentHierarchyOfSemidefiniteProgramsCharacterizingTheSetOfQuantumCorrelations}).
Conversely, Theorem~\ref{theorem:main} applies techniques of real algebraic geometry
to the representation theory of unbounded $^*$\=/algebras.

Roughly speaking, the toolbox of real algebraic geometry contains two mostly separate compartments: Some methods exploit
the fact that $\RR$ is the (unique) complete archimedean ordered field, while others rely on $\RR$ being (one of many)
real closed fields. The first type of techniques (complete archimedean ordered field) leads to e.g.~various strict Positivstellensätze,
characterizing the cone of in some sense ``strictly'' positive elements of $^*$\=/algebras. Examples are the archimedean
Positivstellensatz mentioned above and its unbounded
generalizations~\cite{marshall:extendingArchimedeanPositivstellensatzToTheNonCOmpactCase, schmuedgen:StrictPositivstellensatzForWeylAlgebra, schmuedgen:StrictPositivstellensatzForEnvelopingAlgebras}.
Typically, these results follow from some convex separation argument that provides a positive linear functional to which
the GNS construction can be applied. In rare cases, in particular when
one can prove closedness of some cone of positive elements by elementary means, one can also obtain non-strict Positivstellensätze this way,
e.g.~\cite{helton:positiveNoncommutativePolynomialsAreSumsOfSquares, helton.mcCullough.putinar:nonCommutativePositivstellensatzOnIsometries, schmitt.schoetz:symmetryReductionOfStatesII}.
However, other non-strict Positivstellensätze, like the one by Krivine and Stengle for polynomials on semi\-algebraic sets
\cite{krivine:positivstellensatz, stengle:positivstellensatz},
seem to require the second type of techniques (real closed fields). The same is true for the related real Nullstellensatz
for the polynomial algebra \cite{dubois:nullstellensatzForOrderedFields, risler:nullstellensatz}.
These theorems can be proven by first constructing evaluations in some real closed field extension of $\RR$,
and then applying the Tarski--Seidenberg principle in order to obtain real points.
A notable hybrid that combines both types of techniques is Schmüdgen's Positivstellensatz \cite{schmuedgen:KMomentProblemForCompactSemiAlgebraicSets}
for polynomials on compact semialgebraic sets:
its proof first applies the Krivine--Stengle Positivstellensatz in order to obtain an algebraic certificate of boundedness from the compactness assumption,
which then allows one to apply the archimedean Positivstellensatz.

Our proof of Theorem~\ref{theorem:main} is another hybrid: we combine the real Nullstellensatz for the polynomial algebra
(applied to suitable commutative real subalgebras of in general noncommutative $^*$\=/algebras)
with some classical structure results for enveloping algebras of nilpotent Lie algebras
(see e.g.~Dixmier's monograph~\cite{dixmier:envelopingAlgebras}) that provide the required $^*$\=/representations,
and with explicit analyticity estimates that prove integrability of these $^*$\=/representations.

This article is structured as follows: In the next Section~\ref{sec:integrability} we present our main
Theorem~\ref{theorem:main} after discussing some details concerning
enveloping $^*$\=/algebras of Lie $^*$\=/algebras and their integrable $^*$\=/representations.
A particular example for these notions is the Weyl $^*$\=/algebra and its Schrödinger $^*$\=/representation, which are examined in detail in
Section~\ref{sec:weyl}. In Section~\ref{sec:centralEnv} we adapt some classical structure results for enveloping algebras of nilpotent
Lie algebras to the $^*$\=/algebraic setting, and in Section~\ref{sec:realprime} we examine real prime ideals.
In particular, in Theorem~\ref{theorem:realIdealDecomposition} we show that every real ideal of a Noetherian $^*$\=/algebra
is the intersection of finitely many real prime ideals, which might be of interest also for other applications.
By combining these results we prove Theorem~\ref{theorem:main} in Section~\ref{sec:proof}.
Finally, in Section \ref{sec:realuniv} we discuss the related case of the (real) enveloping algebra of a nilpotent finite-dimensional real Lie algebra,
which is an involutive real Hopf algebra.
Theorem~\ref{theorem:main} can easily be adapted to this setting, resulting in Theorem~\ref{theorem:realUniv},
but the converse procedure would fail, i.e., from Theorem~\ref{theorem:realUniv} alone one could not deduce Theorem~\ref{theorem:main}
in a similarly straightforward way. We see this as an indication that $^*$\=/algebras (not involutive real algebras)
should be the chief object of study in noncommutative real algebraic geometry.

\section{Integrable \texorpdfstring{$^*$-representations}{*-representations} and the real Nullstellensatz} \label{sec:integrability}
By a \emph{Lie $^*$\=/algebra} we mean a complex Lie algebra $\lie g$ endowed with an antilinear involution
$\argument^* \colon \lie g \to \lie g$ such that $\kom{x}{y}^* = \kom{y^*}{x^*}$ for all $x,y \in \lie g$.
Its real linear subspace of \emph{hermitian} elements is $\lie g_\hermitian \coloneqq \set{x\in \lie g}{x^*=x}$.
In particular, any $^*$\=/algebra $\A$ with the commutator bracket from \eqref{eq:commutatorDef} is a Lie $^*$\=/algebra.
\emph{Lie $^*$\=/subalgebras} are Lie subalgebras that are stable under the $^*$\=/involution.
A \emph{Lie $^*$\=/algebra morphism} $\phi \colon \lie g \to \lie h$ between two Lie $^*$\=/algebras
is a linear map that fulfills $\phi\bigl(\kom{x}{x'}\bigr) = \kom[\big]{\phi(x)}{\phi(x')}$ and
$\phi(x^*) = \phi(x)^*$ for all $x,x' \in \lie g$.
Note that Lie $^*$\=/algebras are equivalent to real Lie algebras: For every Lie $^*$\=/algebra
$\lie g$, its \emph{antihermitian part} $\I \lie g_\hermitian = \set{\I x}{x\in \lie g_\hermitian}$ with the
restricted Lie bracket is a real Lie algebra; conversely, for every real Lie algebra $\lie h$, its complexification
$\lie h \oplus \I \lie h$ with the $\CC$-linearly extended Lie bracket and the $^*$\=/involution
$(y+\I y')^* \coloneqq -y + \I y'$ for all $y,y'\in \lie h$ is a Lie $^*$\=/algebra.
An \emph{enveloping $^*$\=/algebra} of a Lie $^*$\=/algebra $\lie g$ is a tuple
$(\A, \iota)$ of a $^*$\=/algebra $\A$ and a Lie $^*$\=/algebra morphism $\iota \colon \lie g\to \A$
(not necessarily injective)
such that $\A$ is generated (as a $^*$\=/algebra) by $\iota(\lie g)$, i.e., every element of $\A$
can be expressed as a linear combination of arbitrary products (including the empty product $1$) of elements of $\iota(\lie g)$.

\begin{remark} \label{remark:envelopingquotient}
  Recall that for any Lie algebra $\lie g$ over a field $\KK$, the \emph{universal enveloping algebra} $\Univ(\lie g)$ is
  an (associative) $\KK$\=/algebra $\Univ(\lie g)$ together with a Lie algebra morphism $\iotaUniv{\lie g} \colon \lie g \to \Univ(\lie g)$
  such that for every other tuple of a $\KK$\=/algebra $\A$ and a Lie algebra morphism $\phi \colon \lie g \to \A$
  there exists a unique algebra morphism $\Phi \colon \Univ(\lie g) \to \A$ that extends $\phi$ in the sense that
  $\Phi \circ \iotaUniv{\lie g} = \phi$. If $\lie g$ is a Lie $^*$\=/algebra (and in particular, $\KK = \CC$), then
  one can equip $\Univ(\lie g)$ with an appropriate $^*$\=/involution such that $\Univ(\lie g)$ becomes a $^*$\=/algebra,
  $\iotaUniv{\lie g}$ a Lie $^*$\=/algebra morphism, and then $\Phi$ is a $^*$\=/algebra morphism if and only if $\phi$ is a
  Lie $^*$\=/algebra morphism. Therefore, every enveloping $^*$\=/algebra of $\lie g$ can be realised as a quotient
  of $\Univ(\lie g)$.
  For finite-dimensional $\lie g$, $\Univ(\lie g)$ is Noetherian by~\cite[Cor.~7.4]{mcconnell.robson:noncommutativeNoetherianRings},
  hence all its quotients are also Noetherian.
\end{remark}

Consider a $^*$\=/algebra $\A$ and a $^*$\=/representation $(\Dom,\pi)$ of $\A$. A linear subspace
$\Eom$ of $\Dom$ is called \emph{$\pi$\=/invariant} if $\pi(a)\,\psi \in \Eom$ for all $a\in \A$, $\psi \in \Eom$.
In this case, the $^*$\=/representation $(\Dom,\pi)$ can be restricted to a $^*$\=/representation $(\Eom,\pi\at{\Eom})$,
where $\pi\at{\Eom}(a)\,\psi \coloneqq \pi(a)\,\psi$ for all $a\in \A$, $\psi \in \Eom$.
Write $\Hilb$ for the completion of $\Dom$ to a Hilbert space and $\norm{\argument}$ for the Hilbert norm on $\Hilb$,
then we can treat $\Dom$ as a
$\norm{\argument}$\=/dense linear subspace of $\Hilb$. For all $a\in \A$, the linear map
$\pi(a) \colon \Dom \to \Dom$ is a linear operator on $\Hilb$ (in general unbounded).
The domain of the adjoint operator of $\pi(a)$ is
\begin{equation}
  \label{eq:daggerDef}
  \dom\bigl(\pi(a)^\dagger\big)
  \coloneqq
  \set[\Bigg]{\phi \in \Hilb}{
    \begin{array}{l}
      \text{there is a vector $\pi(a)^\dagger\phi \in \Hilb$ such that}\\
      \text{$\skal{\phi}{\pi(a)\,\psi} = \skal{\pi(a)^\dagger\phi}{\psi}$ for all $\psi \in \Dom$}
    \end{array}
  }
\end{equation}
and $\Dom \subseteq \dom\bigl(\pi(a)^\dagger\big)$ because $\skal{\phi}{\pi(a)\,\psi} = \skal{\pi(a^*)\,\phi}{\psi}$
for all $\phi,\psi \in \Dom$. We write
\begin{equation}
  \dom(\pi^\dagger) \coloneqq \bigcap_{a\in\A} \dom\bigl(\pi(a)^\dagger\big),
\end{equation}
and we call the $^*$\=/representation $(\Dom,\pi)$ \emph{selfadjoint} if $\Dom = \dom(\pi^\dagger)$.
Such a selfadjoint $^*$\=/representation $(\Dom,\pi)$ is \emph{irreducible} if $\Dom \neq \{0\}$ and its \emph{bounded commutant}
\begin{equation}
  \pi(\A)'
  \coloneqq
  \set[\big]{
    \!C \colon \Dom \to \Dom
  }{
    \text{$C$ is a $\norm{\argument}$-bounded linear map such that}~C \,\pi(a) = \pi(a)\, C~\text{for all $a\in\A$}\!
  }\!\!
\end{equation}
is trivial, i.e., $\pi(\A)' = \set{\lambda\,\id_\Dom}{\lambda \in \CC}$.
Selfadjointness of a $^*$\=/representation guarantees that various reasonable notions of irreducibility are equivalent,
see \cite[Sec.~7.2 and Remark~3 in Sec.~8.3]{schmuedgen:UnboundedOperatorAlgebraAndRepresentationTheory}.
In particular, if a selfadjoint $^*$\=/representation $(\Dom,\pi)$ of a $^*$\=/algebra $\A$ is irreducible
and $\Eom$ is a $\pi$\=/invariant linear subspace of $\Dom$ such that the restricted $^*$\=/representation
$(\Eom,\pi\at\Eom)$ is selfadjoint, then $\Eom = \{0\}$ or $\Eom = \Dom$,
see \cite[Cor.~8.3.12]{schmuedgen:UnboundedOperatorAlgebraAndRepresentationTheory}.
Nevertheless, selfadjoint $^*$\=/representations may still exhibit rather pathological
behaviour, even for finitely generated commutative $^*$\=/algebras $\A$,
see e.g.~\cite[Sec.~9.4]{schmuedgen:UnboundedOperatorAlgebraAndRepresentationTheory}.

Now let $(\A,\iota)$ be an enveloping $^*$\=/algebra of a finite-dimensional Lie $^*$\=/algebra $\lie g$. In this case
there is a well established notion of wellbehaved $^*$\=/representations of $\A$, namely \emph{integrable} $^*$\=/representations.
Essentially, a $^*$\=/representation $(\Dom,\pi)$ of $(\A,\iota)$ is integrable if it arises from differentiating a strongly
continuous unitary representation of a real Lie group $G$ that integrates the real Lie algebra $\I \lie g_\hermitian$.
We will use an equivalent characterization of integrable $^*$\=/representations due to Nelson \cite{nelson:analyticVectors}:
Fix any basis $B$ of the linear subspace $\pi(\iota(\lie g))$ of $\Adbar(\Dom)$.
The space of \emph{analytic vectors} for the $^*$\=/representation $(\Dom,\pi)$ of $(\A,\iota)$ is defined as
\begin{equation}
  \domAna(\pi,\iota)
  \coloneqq
  \set[\Bigg]{
    \psi \in \Dom
  }{
    \begin{array}{l}
      \text{there is $\rho \in {[0,\infty[}$ such that for all $\ell \in \NN$ and $H_1,\dots,H_\ell \in B$}\\
      \text{the estimate $\norm{H_\ell \cdots H_1\,\psi} \le \rho^\ell\, \ell!$ holds}
    \end{array}
  }.
\end{equation}
It is straightforward to check that $\domAna(\pi,\iota)$ depends only on the linear subspace $\pi(\iota(\lie g))$ of $\Adbar(\Dom)$,
but not on the specific choice of the basis $B$.
Moreover, $\domAna(\pi,\iota)$ is a $\pi$\=/invariant linear subspace of $\Dom$.
The $^*$\=/representation $(\Dom,\pi)$ of the enveloping $^*$\=/algebra $(\A,\iota)$ of $\lie g$ is integrable
if and only if $\domAna(\pi,\iota)$ is $\norm{\argument}$\=/dense in $\Dom$
and $(\Dom,\pi)$ selfadjoint, see \cite[Thm.~10.5.8]{schmuedgen:UnboundedOperatorAlgebraAndRepresentationTheory}.
If only the first of these two conditions is fulfilled, it still gives rise to a (possibly larger) integrable $^*$\=/representation:

\begin{proposition} \label{proposition:exponentiation}
  Let $(\A,\iota)$ be an enveloping $^*$\=/algebra of a finite-dimensional Lie $^*$\=/algebra $\lie g$
  and $(\Dom,\pi)$ a $^*$\=/representation of $\A$.
  Assume that $\domAna(\pi,\iota)$ is $\norm{\argument}$\=/dense in $\Dom$.
  Then for all $a\in \A$ and $\phi \in \dom(\pi^\dagger)$ there is a unique element $\pi^\dagger(a)\,\phi \in \dom(\pi^\dagger)$
  such that
  \begin{equation}
    \label{eq:adjointRepDef}
    \skal{\phi}{\pi(a^*)\,\psi} = \skal{\pi^\dagger(a)\,\phi}{\psi}\quad\quad\text{for all }\psi \in \Dom
    ,
  \end{equation}
  and this assignment $\dom(\pi^\dagger) \ni \phi \mapsto \pi^\dagger(a)\,\phi \in \dom(\pi^\dagger)$ defines an element
  $\pi^\dagger(a) \in \Adbar\bigl( \dom(\pi^\dagger) \bigr)$.
  Moreover, the resulting map $\pi^\dagger \colon \A \to \Adbar\bigl( \dom(\pi^\dagger) \bigr)$ is a $^*$\=/algebra
  morphism, and $\bigl( \dom(\pi^\dagger), \pi^\dagger \bigr)$ is an integrable $^*$\=/representation of $(\A,\iota)$.
\end{proposition}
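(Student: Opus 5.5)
The plan is to reduce everything to the integration theory of Lie algebra representations, using Nelson's theorem together with the characterization of integrability cited above, \cite[Thm.~10.5.8]{schmuedgen:UnboundedOperatorAlgebraAndRepresentationTheory}. For each $x\in\lie g_\hermitian$ the operator $\pi(\iota(x))$ is symmetric on $\Dom$. I will check that every analytic vector $\psi\in\domAna(\pi,\iota)$ is an analytic vector for $\pi(\iota(x))$ in the one-operator sense, using $\norm{\pi(\iota(x))^\ell\psi}\le (c\rho)^\ell\ell!$, where $c$ is the norm of the coefficients of $\pi(\iota(x))$ in the basis $B$. Since $\domAna(\pi,\iota)$ is dense and $\pi$\=/invariant, Nelson's theorem \cite{nelson:analyticVectors} gives a strongly continuous unitary representation $U$ of the simply connected Lie group $G$ with Lie algebra $\I\lie g_\hermitian$ on $\Hilb$. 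Moreover, the closure of each $\pi(\iota(x))$ equals $-\I\,\D U(\exp(\I t x))/\D t$, i.e.\ $\pi(\iota(x))$ is essentially selfadjoint and its closure is the generator of $U$ along $\I x$. Let $(\Hilb^\infty,\D U)$ be the associated integrable $^*$\=/representation on smooth vectors, extended to $\Univ(\lie g)$ and thus defined on $\Univ(\lie g)$. By construction it is selfadjoint and integrable, and $\D U$ extends $\pi\circ\iota$ on $\Dom$ via the universal property of $\Univ(\lie g)$. Here one needs $\Dom\subseteq\Hilb^\infty$, which I would obtain from the analytic vectors being a core in Nelson's theorem together with the fact that $\D U(x)\supseteq\pi(\iota(x))$.

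The key identification, and the step I expect to be the main obstacle, is to show that $\dom(\pi^\dagger)=\Hilb^\infty$ and that $\pi^\dagger(a)=\D U(\tilde a)$ for any lift $\tilde a\in\Univ(\lie g)$ of $a$. The inclusion $\Hilb^\infty\subseteq\dom(\pi^\dagger)$ is immediate: for $\phi\in\Hilb^\infty$ and $\psi\in\Dom$ we have $\skal{\phi}{\pi(a)\psi}=\skal{\phi}{\D U(\tilde a)\psi}=\skal{\D U(\tilde a^*)\phi}{\psi}$. For the converse I would use that $\dom(\pi^\dagger)\subseteq\bigcap_{\tilde a}\dom\bigl(\overline{\pi(\iota(\cdot))}\bigr)$ in the following sense. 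For monomials $\tilde a=x_1\cdots x_\ell$ with $x_j\in\lie g_\hermitian$, I would argue by induction on $\ell$, using essential selfadjointness of each $\pi(\iota(x_j))$ on the invariant core $\domAna(\pi,\iota)$. The statement to prove is that $\phi\in\dom(\pi(\iota(x_1)\cdots\iota(x_\ell))^\dagger)$, together with the analogous property for all shorter products, forces $\phi\in\dom(\overline{\D U(x_\ell)}\cdots)$. Alternatively, and probably more cleanly, I would invoke the known result \cite[Sec.~10.2, 10.5]{schmuedgen:UnboundedOperatorAlgebraAndRepresentationTheory} that for the integrable representation $\D U$ of $\Univ(\lie g)$ and its restriction to a dense invariant subspace $\Dom$ containing a core of analytic vectors, the adjoint domain $\dom(\pi^\dagger)$ coincides with $\Hilb^\infty$. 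The reason is that the closure of $\pi$ equals the closure of $\D U$: the analytic vectors are cores for the Nelson Laplacian $1-\sum_j\D U(x_j)^2$, whose powers generate the graph topology. I would verify this core statement by noting that $\Delta=\sum_j\pi(\iota(x_j))^2$ is essentially selfadjoint on $\domAna$, again by Nelson, since analytic vectors of $\pi$ are analytic for $\Delta$ with the bound $\norm{\Delta^k\psi}\le (d\rho)^{2k}(2k)!$. Consequently $\overline{\pi}=\overline{\D U}$, and since $\D U$ is selfadjoint, $\dom(\pi^\dagger)=\dom(\D U^\dagger)=\Hilb^\infty$.

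With this identification the remaining assertions follow formally. Uniqueness of $\pi^\dagger(a)\phi$ holds because $\Dom$ is dense, and existence comes from $\pi^\dagger(a)\phi=\D U(\tilde a)\phi$, which lies in $\Hilb^\infty=\dom(\pi^\dagger)$ and satisfies \eqref{eq:adjointRepDef}. The map is well defined on $\A$ (independent of the lift $\tilde a$) precisely by the defining identity \eqref{eq:adjointRepDef}, which only involves $\pi(a^*)$. That $\pi^\dagger$ is a $^*$\=/algebra morphism into $\Adbar(\dom(\pi^\dagger))$ is inherited from $\D U$. Finally, integrability holds because $\pi^\dagger\circ\iota$ coincides with the derivative of $U$: selfadjointness is the equality $\dom(\pi^\dagger)=\Hilb^\infty$ together with $\D U$ being selfadjoint, and $\domAna(\pi,\iota)\subseteq\domAna(\pi^\dagger,\iota)$ is dense.
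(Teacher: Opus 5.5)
Your architecture (integrate with Nelson, show $\pi\subseteq\D U$ on $\Dom$, identify $\pi^\dagger$ with $\D U$) is what the results cited in the paper do internally. The paper simply quotes \cite[Cor.~10.5.5]{schmuedgen:UnboundedOperatorAlgebraAndRepresentationTheory} for exponentiability and \cite[Prop.~10.5.2]{schmuedgen:UnboundedOperatorAlgebraAndRepresentationTheory} for integrability of $\bigl(\dom(\pi^\dagger),\pi^\dagger\bigr)$. However, the Laplacian argument you commit to does not establish the step you flag as the main obstacle, $\dom(\pi^\dagger)\subseteq\Hilb^\infty$. Write $\bar\Delta$ for the selfadjoint closure of $\D U(\sum_j x_j^2)$, with $x_1,\dots,x_d$ a basis of $\lie g_\hermitian$.

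First, $\norm{\Delta^k\psi}\le(d\rho)^{2k}(2k)!$ does not make $\psi$ an analytic vector for $\Delta$; that needs growth like $s^k k!$. It only makes $\psi$ semi-analytic, so essential selfadjointness of the positive operator $\Delta$ requires a Nussbaum-type theorem, not Nelson's.

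Second, and decisively, even granting it, essential selfadjointness of $\Delta$ on $\domAna(\pi,\iota)$ only says that $\Dom$ is a core for $\bar\Delta$. Density of $\Dom$ in $\Hilb^\infty$ for the graph topology of $\D U$ requires $\Dom$ to be a core for every power $\bar\Delta^n$. An invariant core of a selfadjoint operator need not be a core for its square; multiplication by $x^2$ on $\CC[x]\subseteq L^2(\mu)$, for suitable measures $\mu$ built from indeterminate moment problems, gives examples. So ``$\Delta$ essentially selfadjoint, hence $\overline{\pi}=\D U$'' is a non sequitur.

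The identity $\overline{\pi}=\D U$ can in fact be proved here, but only by using analyticity a second time (e.g.\ to show $U(g)\psi\in\dom(\overline{\pi})$ for all $g$ and then smoothing with $U(f)$). It is also more than the statement needs. Your fallback of invoking the known result is exactly the citation the paper makes.

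The clean repair is your first, induction-based idea, which never involves $\overline{\pi}$. For $x_1,\dots,x_\ell\in\lie g_\hermitian$ let $A_j\coloneqq\overline{\pi(\iota(x_j))}$. By Nelson's theorem this is selfadjoint and coincides with the closure of $\D U(x_j)$ (in your normalisation). Suppose $\phi\in\dom(\pi^\dagger)$ and, inductively, $\phi$ lies in the domain of every product of fewer than $\ell$ such closed generators. Then selfadjointness of $A_1,\dots,A_{\ell-1}$ and $\pi$-invariance of $\Dom$ give $\skal{\phi}{\pi\bigl(\iota(x_1)\cdots\iota(x_\ell)\bigr)\,\psi}=\skal{A_{\ell-1}\cdots A_1\phi}{\pi(\iota(x_\ell))\,\psi}$ for all $\psi\in\Dom$. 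Hence $A_{\ell-1}\cdots A_1\phi\in\dom\bigl(\pi(\iota(x_\ell))^\dagger\bigr)=\dom(A_\ell)$.

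So $\phi$ is in the domain of every product of closed generators, hence in $\dom(\bar\Delta^n)$ for all $n$. This holds because $\sum_j A_j^2$ is a symmetric extension of the essentially selfadjoint operator $\D U(\sum_j x_j^2)$, and therefore $\sum_j A_j^2\subseteq\bar\Delta$. Finally, $\bigcap_n\dom(\bar\Delta^n)=\Hilb^\infty$ is the standard Nelson--Goodman description of smooth vectors.

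The same conclusion holds trivially for vectors of $\Dom$, which is invariant under all $A_j$, and this is how $\Dom\subseteq\Hilb^\infty$ should be justified. As written, that step is circular: $\D U(x)$ is only defined on $\Hilb^\infty$, and Nelson's theorem only yields $\overline{\D U(x)}=\overline{\pi(\iota(x))}$.
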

\begin{proof}
  Write $\Hilb$ for the completion of $\Dom$ to a Hilbert space.
  Let $a\in \A$ and $\phi \in \dom(\pi^\dagger)$.
  By definition of $\dom(\pi^\dagger)$, there exists an element
  $\pi^\dagger(a)\,\phi \in \Hilb$ such that \eqref{eq:adjointRepDef} holds (in the notation of
  \eqref{eq:daggerDef}, $\pi^\dagger(a)\,\phi = \pi(a^*)^\dagger\,\phi$), and $\pi^\dagger(a)\,\phi$ is uniquely
  determined by \eqref{eq:adjointRepDef} because $\Dom$ is dense in $\Hilb$.
  This is the so-called adjoint representation of $(\Dom,\pi)$,
  see \cite[Def.~8.1.4]{schmuedgen:UnboundedOperatorAlgebraAndRepresentationTheory}.
  In particular, $\pi^\dagger(a)\,\phi \in \dom(\pi^\dagger)$ for all $a\in \A$, $\phi \in \dom(\pi^\dagger)$
  by \cite[Prop.~8.1.3]{schmuedgen:UnboundedOperatorAlgebraAndRepresentationTheory}.
  The assumption that $\domAna(\pi,\iota)$ is $\norm{\argument}$\=/dense in $\Dom$ guarantees that $(\Dom,\pi)$ is an
  exponentiable $^*$\=/representation of $(\A,\iota)$ as in
  \cite[Cor.~10.5.5]{schmuedgen:UnboundedOperatorAlgebraAndRepresentationTheory}.
  Therefore, by \cite[Prop.~10.5.2]{schmuedgen:UnboundedOperatorAlgebraAndRepresentationTheory},
  $\bigl( \dom(\pi^\dagger), \pi^\dagger \bigr)$ is an integrable $^*$\=/representation of $(\A,\iota)$.
\end{proof}
Note that it is a non-trivial consequence of Proposition~\cite[Prop.~10.5.2]{schmuedgen:UnboundedOperatorAlgebraAndRepresentationTheory}
that $\bigl( \dom(\pi^\dagger), \pi^\dagger \bigr)$ is in fact a $^*$\=/representation
(e.g., see \cite[Ex.~8.1.14]{schmuedgen:UnboundedOperatorAlgebraAndRepresentationTheory}).
We also mention that by \cite[Cor.~10.2.18]{schmuedgen:UnboundedOperatorAlgebraAndRepresentationTheory},
an integrable $^*$\=/representation is irreducible if and only if an integrating unitary group representation
(restricted to the connected component of the unit) is irreducible.

Integrability and irreducibility depend only on properties of the represented Lie $^*$\=/algebra:

\begin{proposition} \label{proposition:irredintstability}
  Let $\Dom$ be a pre-Hilbert space.
  Moreover, for $k\in \{1,2\}$, let $\lie g_k$ be a finite-dimensional Lie $^*$\=/algebra, $(\A_k,\iota_k)$ an enveloping $^*$\=/algebra
  of $\lie g_k$, and $\pi_k \colon \A_k \to \Adbar(\Dom)$ a $^*$\=/algebra morphism.
  If $(\Dom,\pi_1)$ is an irreducible integrable $^*$\=/representation of $(\A_1,\iota_1)$
  and $\pi_1(\iota_1(\lie g_1)) = \pi_2(\iota_2(\lie g_2))$, then
  $(\Dom,\pi_2)$ is an irreducible integrable $^*$\=/representation of $(\A_2,\iota_2)$.
\end{proposition}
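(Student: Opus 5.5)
The plan is to show that each ingredient of ``irreducible integrable'' depends only on the subspace $L \coloneqq \pi_1(\iota_1(\lie g_1)) = \pi_2(\iota_2(\lie g_2))$ of $\Adbar(\Dom)$. The first step is to compare the images of the two algebras. Since $\A_k$ is generated as a $^*$\=/algebra by $\iota_k(\lie g_k)$ and $\pi_k$ is a $^*$\=/algebra morphism, the image $\pi_k(\A_k)$ is the $^*$\=/subalgebra of $\Adbar(\Dom)$ generated by $\pi_k(\iota_k(\lie g_k)) = L$. This subspace is already closed under the involution, because $\iota_k$ and $\pi_k$ respect $^*$. Concretely, $\pi_k(\A_k)$ is the linear span of all finite products of elements of $L$, including the empty product $\id_\Dom$. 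Hence $\pi_1(\A_1) = \pi_2(\A_2) \eqqcolon \mathcal{O}$.

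Next I check that each property in question is a property of the pair $(L,\mathcal{O})$ alone.

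\begin{itemize}
\item \emph{Selfadjointness.} Here $\dom(\pi_k^\dagger) = \bigcap_{a\in\A_k}\dom\bigl(\pi_k(a)^\dagger\bigr) = \bigcap_{T\in\mathcal{O}}\dom(T^\dagger)$. This depends only on the set of operators $\mathcal{O}$, so $\Dom = \dom(\pi_1^\dagger)$ holds if and only if $\Dom = \dom(\pi_2^\dagger)$.
\item \emph{Irreducibility.} The bounded commutant $\pi_k(\A_k)'$ consists of the bounded $C$ commuting with every $T \in \mathcal{O}$. So it is the same set for $k=1,2$, and triviality transfers. The condition $\Dom \neq \{0\}$ is also common to both.
\item \emph{Analytic vectors.} As remarked after the definition, $\domAna(\pi_k,\iota_k)$ depends only on the linear subspace $\pi_k(\iota_k(\lie g_k)) = L$ and not on the chosen basis. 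Choosing the same basis $B$ of $L$ for both gives $\domAna(\pi_1,\iota_1) = \domAna(\pi_2,\iota_2)$. In particular, $\norm{\argument}$\=/density in $\Dom$ transfers.
\end{itemize}

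Finally, I combine these using the characterization cited just before Proposition~\ref{proposition:exponentiation} (\cite[Thm.~10.5.8]{schmuedgen:UnboundedOperatorAlgebraAndRepresentationTheory}). A $^*$\=/representation of an enveloping $^*$\=/algebra of a finite-dimensional Lie $^*$\=/algebra is integrable if and only if its analytic vectors are dense and it is selfadjoint. Both conditions transfer from $\pi_1$ to $\pi_2$, so $(\Dom,\pi_2)$ is integrable, and by the second bullet it is irreducible.

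The only step needing some care is the first, identifying $\pi_k(\A_k)$ as the span of products of elements of $L$. This uses that $\A_k$ is generated by $\iota_k(\lie g_k)$ in the sense defined in the paper: linear combinations of products, with the $^*$\=/closure automatic since $\iota_k$ is a Lie $^*$\=/algebra morphism. I do not expect a genuine obstacle. The substantive analytic input, namely Nelson's criterion and the basis-independence of $\domAna$, is already available from the discussion preceding the proposition.
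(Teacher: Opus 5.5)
Your proposal is correct and follows essentially the same route as the paper. The paper also observes that $\pi_1(\A_1)=\pi_2(\A_2)$ and $\domAna(\pi_1,\iota_1)=\domAna(\pi_2,\iota_2)$, and concludes that the bounded commutants, the domains $\bigcap_a\dom\bigl(\pi_k(a)^\dagger\bigr)$, and density of the analytic vectors all transfer.
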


\begin{proof}
  $\pi_1(\iota_1(\lie g_1)) = \pi_2(\iota_2(\lie g_2))$ implies $\pi_1(\A_1) = \pi_2(\A_2)$
  and $\domAna(\pi_1,\iota_1) = \domAna(\pi_2,\iota_2)$.
  Therefore, $\set{\lambda \id_\Dom}{\lambda \in \CC} = \pi_1(\A_1)' = \pi_2(\A_2)'$,
  $\Dom = \bigcap_{a\in\A_1} \dom\bigl(\pi_1(a)^\dagger\big) = \bigcap_{a\in\A_2} \dom\bigl(\pi_2(a)^\dagger\big)$,
  and $\domAna(\pi_2,\iota_2)$ is $\norm{\argument}$\=/dense in $\Dom$ because $\domAna(\pi_1,\iota_1)$ is.
\end{proof}

Irreducible integrable $^*$\=/representations
are the noncommutative generalization of ``real points'' (or rather, $^*$\=/characters) in real algebraic geometry:
Let $\A$ be a commutative $^*$\=/algebra generated by finitely many hermitian elements $x_1,\dots,x_d$ with $d\in \NN$.
Define $\iota \colon \CC^d \to \A$ as the linear map that sends the $j$\=/th standard basis vector $e_j$ of $\CC^d$
to $x_j$, then $(\A,\iota)$ is an enveloping $^*$\=/algebra of the abelian Lie $^*$\=/algebra $\CC^d$
(with the $0$-bracket and the $^*$\=/involution given by elementwise complex conjugation).
A \emph{$^*$\=/character} of $\A$ is a $^*$\=/homomorphism $\varphi \colon \A \to \CC$.
As $\CC \cong \Adbar(\Hilb)$ for the $1$\=/dimensional Hilbert space $\Hilb$, a $^*$\=/character $\varphi$ of $\A$
is the same as a $1$\=/dimensional $^*$\=/representation $(\Hilb,\varphi)$ of $\A$,
and it is easy to check that $(\Hilb,\varphi)$ is even an irreducible integrable $^*$\=/representation of $(\A,\iota)$.
By \cite[Cor.~9.1.11]{schmuedgen:UnboundedOperatorAlgebraAndRepresentationTheory},
these $1$\=/dimensional $^*$\=/representations are the only irreducible integrable ones.

This correspondence between $^*$\=/characters and irreducible integrable $^*$\=/representations naturally leads to the question,
whether the classical real Nullstellensatz can be extended to enveloping $^*$\=/algebras, for a suitable notion of real ideals.
Namely, an ideal $I$ of a $^*$\=/algebra $\A$ is called \emph{real} if the following holds:
Whenever $k\in \NN$ and $a_1,\dots,a_k \in \A$ fulfill $\sum_{j=1}^k a_j^* a_j \in I$, then $a_1,\dots,a_k \in I$.
For example, it is easy to check that for every $^*$\=/representation $(\Dom,\pi)$ of $\A$,
the kernel $\ker(\pi)$ is a real ideal of $\A$ because $\Adbar(\Dom)$ is formally real.

\begin{definition} \label{definition:realNullstellensatz}
  Let $\lie g$ be a finite-dimensional Lie $^*$\=/algebra. We say that \emph{the real Nullstellensatz holds for $\lie g$}
  if for every enveloping $^*$\=/algebra $(\A,\iota)$ of $\lie g$ and all ideals $I$ of $\A$ the following are equivalent:
  \begin{enumerate}
    \item \label{item:realNullstellensatz:real} $I$ is real,
    \item \label{item:realNullstellensatz:ker} there is a set $X$ of irreducible integrable $^*$\=/representations of $(\A,\iota)$
      such that
      \begin{equation}
        I = \bigcap_{(\Dom,\pi)\in X} \ker(\pi),
      \end{equation}
      where, by convention, the empty intersection is $\bigcap_{(\Dom,\pi)\in \emptyset} \ker(\pi) \coloneqq \A$.
  \end{enumerate}
\end{definition}
As the kernel of any $^*$\=/representation is a real ideal and as one easily checks that intersections of real ideals are again real,
the implication \ref{item:realNullstellensatz:ker}~$\implies$~\ref{item:realNullstellensatz:real} is always true.
So the real Nullstellensatz holds for $\lie g$ if and only if the converse implication
\ref{item:realNullstellensatz:real}~$\implies$~\ref{item:realNullstellensatz:ker} holds.
Moreover, by Remark~\ref{remark:envelopingquotient}, it would actually be sufficient to consider
only the case $(\A,\iota) = \bigl(\Univ(\lie g),\iotaUniv{\lie g}\bigr)$.

One motivation for Definition~\ref{definition:realNullstellensatz} comes from the observation
that such a noncommutative real Nullstellensatz leads to a representation theorem for the corresponding quotient $^*$\=/algebras:

\begin{proposition} \label{proposition:mainCorollary}
  Let $(\A,\iota)$ be an enveloping $^*$\=/algebra of a finite-dimensional Lie $^*$\=/algebra $\lie g$
  and assume that the real Nullstellensatz holds for $\lie g$. Then the following are equivalent:
  \begin{enumerate}
    \item \label{item:prop:mainCorollary:real} $\A$ is formally real.
    \item \label{item:prop:mainCorollary:faithful} There exists a faithful $^*$\=/representation of $\A$.
    \item \label{item:prop:mainCorollary:integrable} There exists a faithful integrable $^*$\=/representation of $\A$.
  \end{enumerate}
\end{proposition}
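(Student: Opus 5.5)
We prove the cycle of implications \ref{item:prop:mainCorollary:integrable}~$\implies$~\ref{item:prop:mainCorollary:faithful}~$\implies$~\ref{item:prop:mainCorollary:real}~$\implies$~\ref{item:prop:mainCorollary:integrable}. The first implication is trivial. For the second, let $(\Dom,\pi)$ be a faithful $^*$\=/representation. Suppose $\sum_{j=1}^n a_j^*a_j = 0$. Then for every $\psi\in\Dom$ we have $\sum_j \norm{\pi(a_j)\psi}^2 = \skal{\psi}{\pi(\sum_j a_j^*a_j)\psi} = 0$, so $\pi(a_j)=0$ for all $j$. Faithfulness then gives $a_j=0$, so $\A$ is formally real.

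The substantial implication is \ref{item:prop:mainCorollary:real}~$\implies$~\ref{item:prop:mainCorollary:integrable}. If $\A$ is formally real, then the zero ideal $\{0\}$ is real: $\sum_j a_j^*a_j\in\{0\}$ forces all $a_j=0$. Since the real Nullstellensatz holds for $\lie g$ and $(\A,\iota)$ is an enveloping $^*$\=/algebra of $\lie g$, Definition~\ref{definition:realNullstellensatz} yields a set $X$ of irreducible integrable $^*$\=/representations of $(\A,\iota)$ with $\bigcap_{(\Dom,\pi)\in X}\ker(\pi)=\{0\}$.

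It remains to assemble these into a single faithful integrable $^*$\=/representation. The degenerate case $\A=\{0\}$, i.e.\ $1=0$, is handled separately: there the zero representation on $\Dom=\{0\}$ is trivially faithful, and integrable since $\domAna=\{0\}$ is dense in $\Dom=\{0\}$ and $\dom(\pi^\dagger)=\{0\}$. Otherwise $X\neq\emptyset$. We form the algebraic direct sum $\Dom_\oplus\coloneqq\bigoplus_{(\Dom,\pi)\in X}\Dom$ with the orthogonal sum inner product and $\pi_\oplus(a)\coloneqq\bigoplus\pi(a)$. This is a $^*$\=/representation, and its kernel is $\bigcap_X\ker(\pi)=\{0\}$, so it is faithful.

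The only real obstacle is integrability of the direct sum, which we handle via Proposition~\ref{proposition:exponentiation}. We fix a basis $b_1,\dots,b_m$ of $\lie g$ and use the finite spanning set $\pi_\oplus(\iota(b_i))$ of $\pi_\oplus(\iota(\lie g))$ in place of a basis; the definition of analytic vectors is insensitive to this up to a change of $\rho$. A finite sum of vectors $\psi_1+\dots+\psi_r$ with each $\psi_k\in\domAna(\pi_k,\iota)$ is then analytic for $\pi_\oplus$: take $\rho$ to be a suitable maximum of the constants for the $\psi_k$ multiplied by the size of the coefficients relating the chosen spanning set to the bases used for each $\pi_k$, and apply the triangle inequality, which only costs a factor $r$ that $\rho^\ell$ absorbs for $\ell\ge1$. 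Since each $\domAna(\pi,\iota)$ is dense in $\Dom$ by integrability, $\domAna(\pi_\oplus,\iota)$ is dense in $\Dom_\oplus$.

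Proposition~\ref{proposition:exponentiation} then shows that $(\dom(\pi_\oplus^\dagger),\pi_\oplus^\dagger)$ is integrable. It extends $\pi_\oplus$, so it is still faithful: if $\pi_\oplus^\dagger(a)=0$ then $\pi_\oplus(a)=0$ on $\Dom_\oplus\subseteq\dom(\pi_\oplus^\dagger)$, hence $a=0$. This proves \ref{item:prop:mainCorollary:integrable}. (Alternatively, one could verify directly that the algebraic direct sum of selfadjoint representations is selfadjoint, but that fails for infinite sums, which is why passing to $\pi_\oplus^\dagger$ is the cleaner route.)
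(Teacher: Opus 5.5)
Your proposal is correct and follows essentially the same route as the paper: apply the Nullstellensatz to the real ideal $\{0\}$, form the algebraic direct sum of the resulting irreducible integrable $^*$\=/representations, note that the direct sum of their dense spaces of analytic vectors is dense and consists of analytic vectors, and pass to the integrable $^*$\=/representation obtained from Proposition~\ref{proposition:exponentiation}, which stays faithful because it extends the direct sum. Your separate treatment of $\A=\{0\}$ and your explicit analytic-vector estimate only spell out steps the paper treats as clear.
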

\begin{proof}
  The implication \ref{item:prop:mainCorollary:integrable}~$\implies$~\ref{item:prop:mainCorollary:faithful} is trivial,
  and the implication \ref{item:prop:mainCorollary:faithful}~$\implies$~\ref{item:prop:mainCorollary:real} is easy to check
  (which has been discussed already in the introduction). It remains to prove the implication
  \ref{item:prop:mainCorollary:real}~$\implies$~\ref{item:prop:mainCorollary:integrable}, so assume that $\A$ is formally real,
  i.e., the ideal $\{0\}$ of $\A$ is real. As the real Nullstellensatz holds for $\lie g$ by assumption,
  there is a set $X$ of irreducible integrable $^*$\=/representations of $(\A,\iota)$ such that $\bigcap_{(\Dom,\pi)\in X} \ker(\pi) = \{0\}$.
  We define a new pre-Hilbert space as the direct sum
  \begin{equation*}
    \Eom \coloneqq \bigoplus_{(\Dom,\pi) \in X} \Dom,
  \end{equation*}
  and for every $a\in \A$, we define the linear map $\rho(a) \colon \Eom \to \Eom$,
  \begin{equation*}
    \sum_{(\Dom,\pi) \in X} \psi_{(\Dom,\pi)}
    \mapsto
    \rho(a) \sum_{(\Dom,\pi) \in X} \psi_{(\Dom,\pi)}
    \coloneqq
    \sum_{(\Dom,\pi) \in X} \pi(a)\,\psi_{(\Dom,\pi)}
    .
  \end{equation*}
  It is straightforward to check that this way, we obtain a $^*$\=/algebra morphism $\rho \colon \A \to \Adbar(\Eom)$.
  From $\bigcap_{(\Dom,\pi)\in X} \ker(\pi) = \{0\}$ it follows that $\ker(\rho) = \{0\}$.
  So $(\Eom,\rho)$ is a faithful $^*$\=/representation of $\A$.

  For every $(\Dom,\pi) \in X$ we can treat the pre-Hilbert space $\Dom$ as a linear subspace of $\Eom$.
  Under this convention, the identity $\rho(a)\,\psi = \pi(a)\,\psi$ holds for all $a\in \A$, $(\Dom,\pi) \in X$, and $\psi \in \Dom$.
  The spaces of analytic vectors $\domAna(\pi,\iota)$ for any $(\Dom,\pi) \in X$ then are also linear subspaces of $\Eom$,
  and clearly $\domAna(\pi,\iota) \subseteq \domAna(\rho,\iota)$ for all $(\Dom,\pi) \in X$.
  As $\domAna(\rho,\iota)$ is a linear subspace of $\Eom$, it follows that
  $\bigoplus_{(\Dom,\pi) \in X} \domAna(\pi,\iota) \subseteq \domAna(\rho,\iota)$.
  Moreover, $\norm{\argument}$\=/denseness of $\domAna(\pi,\iota)$ in $\Dom$ for all $(\Dom,\pi) \in X$,
  implies that $\bigoplus_{(\Dom,\pi) \in X} \domAna(\pi,\iota)$, and consequently also $\domAna(\rho,\iota)$,
  are $\norm{\argument}$\=/dense in $\Eom$.
  Let $\bigl( \dom(\rho^\dagger), \rho^\dagger \bigr)$ be the integrable $^*$\=/representation
  of $(\A,\iota)$ from Proposition~\ref{proposition:exponentiation}.
  For all $a\in \A$ and $\phi \in \Eom$, the identity $\rho^\dagger(a)\,\phi = \rho(a)\,\phi$ holds,
  because $\skal{\phi}{\rho(a^*)\,\psi} = \skal{\rho(a)\,\phi}{\psi}$ for all $\psi \in \Eom$.
  It follows that $\ker(\rho^\dagger) \subseteq \ker(\rho) = \{0\}$, so
  $\bigl( \dom(\rho^\dagger), \rho^\dagger \bigr)$ is also faithful.
\end{proof}

Unfortunately, little is known about the class of Lie $^*$\=/algebras for which the real Nullstellensatz holds.
In the commutative case we have the following consequence of the classical real Nullstellensatz for the polynomial algebra:

\begin{theorem}[classical real Nullstellensatz] \label{theorem:NullstellensatzCommutative}
  Let $\A$ be a finitely generated and formally real commutative $^*$\=/algebra and write
  $\starchar(\A)$ for the set of $^*$\=/characters of $\A$. Then $\bigcap_{\varphi \in \starchar(\A)} \ker(\varphi) = \{0\}$.
\end{theorem}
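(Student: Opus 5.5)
We reduce to the classical real Nullstellensatz of Dubois and Risler for the real polynomial ring. Since $\A$ is commutative and finitely generated, choose generators $a_1,\dots,a_m$. Replacing each $a_j$ by its real and imaginary parts $\tfrac12(a_j+a_j^*)$ and $\tfrac1{2\I}(a_j-a_j^*)$, we get finitely many hermitian generators $x_1,\dots,x_d$. Let $\A_\hermitian$ be the real subspace of hermitian elements. Because $\A$ is commutative, $\A_\hermitian$ is a real subalgebra: for $a,b$ hermitian, $(ab)^*=b^*a^*=ba=ab$. It is generated as a real algebra by $x_1,\dots,x_d$, since every monomial in the commuting hermitian $x_j$ is hermitian, and $\A=\A_\hermitian\oplus\I\A_\hermitian$. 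Hence $\A_\hermitian\cong\RR[X_1,\dots,X_d]/J$ for the ideal $J$ of polynomial relations with real coefficients.

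Next we translate formal reality. Let $p_1,\dots,p_k\in\RR[X]$ with $\sum_j p_j^2\in J$. Then $\sum_j p_j(x)^*p_j(x)=\sum_j p_j(x)^2=0$ in $\A$. Formal reality of $\A$ gives $p_j(x)=0$, i.e.\ $p_j\in J$. So $J$ is a real ideal of $\RR[X]$ in the classical sense.

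By the Dubois--Risler real Nullstellensatz, every real ideal equals the vanishing ideal of its real zero set. Thus $J=\mathcal I(Z(J))$ with $Z(J)=\set{t\in\RR^d}{p(t)=0\text{ for all }p\in J}$. Each $t\in Z(J)$ gives a real algebra morphism $\A_\hermitian\to\RR$, $p(x)\mapsto p(t)$. We extend it complex-linearly to $\varphi_t\colon\A\to\CC$, $\varphi_t(h+\I h')\coloneqq p(t)+\I p'(t)$ for $h=p(x)$ and $h'=p'(x)$. This map is multiplicative because $\A\cong\A_\hermitian\otimes_\RR\CC$ as algebras. It is a $^*$\=/character because $(h+\I h')^*=h-\I h'$ and the values on $\A_\hermitian$ are real.

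Finally, let $a=h+\I h'$ lie in $\ker(\varphi_t)$ for all $t\in Z(J)$. Since $\varphi_t(h)$ and $\varphi_t(h')$ are real, both vanish for every such $t$. Writing $h=p(x)$, this says $p$ vanishes on $Z(J)$, so $p\in J$ and $h=0$; likewise $h'=0$. Hence $\bigcap_{\varphi\in\starchar(\A)}\ker(\varphi)\subseteq\bigcap_{t\in Z(J)}\ker(\varphi_t)=\{0\}$.

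The only substantive input is the classical Nullstellensatz itself. Within this reduction, the one step needing care is the identification $\A\cong\A_\hermitian\otimes_\RR\CC$, which ensures that the complexified characters are well-defined and multiplicative.
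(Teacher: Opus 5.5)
Your proposal is correct and follows the paper's proof essentially step for step. Both arguments take hermitian generators of $\A_\hermitian$, show that the kernel of $\RR[X_1,\dots,X_d]\to\A_\hermitian$ is real by formal reality of $\A$, invoke Dubois--Risler to obtain real points, and complexify the evaluation maps into $^*$-characters. The only difference is cosmetic: you show that an element killed by every $\varphi_t$ has vanishing hermitian and antihermitian parts, whereas the paper takes a nonzero part of a given $\hat a\neq 0$ and separates it by a single point.
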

\begin{proof}
  As $\A$ is commutative, its hermitian part $\A_\hermitian$ is a commutative real algebra.
  Every $a\in \A$ can be decomposed as $a = a_r + \I a_i$ with uniquely determined $a_r,a_i \in \A_\hermitian$,
  namely $a_r = \frac12(a+a^*)$ and $a_i = \frac{1}{2\I}(a-a^*)$. By decomposing the (finitely many) generators
  of $\A$ in this way, we obtain finitely many generators $g_1,\dots,g_n$ of the commutative real algebra $\A_\hermitian$,
  where $n\in \NN$. Let $\pr \colon \RR[x_1,\dots,x_n] \to \A_\hermitian$
  be the unique algebra homomorphism that maps $x_i$ to $g_i$ for all $i\in \{1,\dots,n\}$.
  Note that $\pr$ is surjective.
  If $k\in \NN$ and $p_1,\dots,p_k \in \RR[x_1,\dots,x_n]$ fulfill
  $\sum_{j=1}^k p_j^2 \in \ker(\pr)$, then $\sum_{j=1}^k \pr(p_j)^2 =0$, so $\pr(p_1) = \dots = \pr(p_k) = 0$
  because $\A$ is formally real, and consequently $p_1,\dots,p_k \in \ker(\pr)$.
  Therefore, the Nullstellensatz for the real polynomial algebra applies and shows that there is a subset $X \subseteq \RR^n$ such that
  $\ker(\pr) = \set{p\in \RR[x_1,\dots,x_n]}{p(\xi) = 0~\text{for all }\xi\in X}$, see e.g.~\cite{risler:nullstellensatz}
  or \cite[Thm.~4.1.4]{bcr:realAlgebraicGeometry}.

  Now let $\hat a = \hat a_r + \I \hat a_i \in \A$ with $\hat a_r,\hat a_i \in \A_\hermitian$. If $\hat a \neq 0$,
  then at least one of $\hat a_r$ or $\hat a_i$ is a non-zero element of $\A_\hermitian$, which we denote by $\hat a'$.
  There also is $\hat p' \in \RR[x_1,\dots,x_n]$
  such that $\pr(\hat p') = \hat a'$. From $\hat a' \neq 0$ it follows that $\hat p' \notin \ker(\pr)$,
  so there is $\xi \in X$ such that $\hat p'(\xi) \neq 0$. We can now define the algebra morphism
  $\varphi \colon \A_\hermitian \to \RR \subseteq \CC$,
  \begin{equation*}
    a \mapsto \varphi(a) \coloneqq p(\xi)\quad\quad\text{for any $p\in\RR[x_1,\dots,x_n]$ such that $\pr(p) = a$}
    ,
  \end{equation*}
  which is welldefined because $\xi \in X$ means that $p(\xi) = 0$ for all $p\in\ker(\pr)$. By construction
  $\varphi(\hat a') = \hat p'(\xi) \neq 0$, so $\varphi_\CC \colon \A \to \CC$, $a\mapsto \varphi_\CC(a) \coloneqq
  \varphi(a_r)+\I \varphi(a_i)$ is a $^*$\=/character of $\A$ such that $\varphi_\CC(\hat a) \neq 0$.
\end{proof}
As a straightforward consequence of Theorem~\ref{theorem:NullstellensatzCommutative},
the real Nullstellensatz holds for every abelian finite-dimensional Lie $^*$\=/algebra.
%
%
To the best of our knowledge, there are no results concerning the real Nullstellensatz for
non-abelian Lie $^*$\=/algebras, except the predecessor article
\cite{schmitt.schoetz:realNullstellensatzForTwoStepNilpotentLieAlgebras}, which solved the $2$\=/step nilpotent case.
The purpose of this article is to give the solution in the general nilpotent case:

\begin{theorem} \label{theorem:main}
  The real Nullstellensatz holds for every nilpotent finite-dimensional Lie $^*$\=/algebra.
\end{theorem}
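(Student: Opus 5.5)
We reduce to $(\A,\iota) = \bigl(\Univ(\lie g),\iotaUniv{\lie g}\bigr)$ by Remark~\ref{remark:envelopingquotient}, since a real ideal of a quotient pulls back to a real ideal of $\Univ(\lie g)$. We only need the implication from real ideals to kernels: given a real ideal $I$ and $a \notin I$, we construct an irreducible integrable $^*$\=/representation $\pi$ with $I\subseteq\ker(\pi)$ and $\pi(a)\neq 0$. The argument proceeds in three steps.

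\textbf{Reduction to real prime ideals.} First we show that every real ideal of a Noetherian $^*$\=/algebra is a finite intersection of real prime ideals, using that $\Univ(\lie g)$ is Noetherian. The natural route is to take the minimal primes over $I$. They are finitely many because of the Noetherian property, and their intersection is $I$ because a real ideal is semiprime: if $b\A b\subseteq I$ then $(b^*b)^*(b^*b)\in I$, realness gives $b^*b\in I$, and then $b\in I$. We then need that each minimal prime over a real ideal is real (or can be replaced by real primes). For this we use the $^*$\=/stability of $I$ and a sums-of-squares argument in the prime quotient. This allows us to assume $I=P$ is a real prime ideal.

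\textbf{Structure of $\Univ(\lie g)/P$.} For nilpotent $\lie g$, Dixmier's theory says that a prime quotient $\Univ(\lie g)/P$ has center $\Zentrum$ which is a finitely generated commutative domain. After localizing at the nonzero central elements, the quotient becomes a Weyl algebra over the field of fractions of $\Zentrum$. We adapt this to the $^*$\=/setting. The center $\Zentrum(\Univ(\lie g)/P)$ is a finitely generated commutative $^*$\=/algebra, and it is formally real because $P$ is real. By Theorem~\ref{theorem:NullstellensatzCommutative} it therefore has many $^*$\=/characters $\chi$. We localize at a suitable single central element $z$ with $z^*=z$, chosen so that $(\Univ(\lie g)/P)[z^{-1}] \cong \Zentrum[z^{-1}]\otimes \Weyl{n}$. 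This requires choosing the Dixmier/Kirillov-type generators hermitian, so that the isomorphism is compatible with $^*$. For a $^*$\=/character $\chi$ of $\Zentrum[z^{-1}]$, composing with $\chi\otimes\id$ gives a $^*$\=/homomorphism onto the Weyl $^*$\=/algebra $\Weyl{n}$, which we then compose with the Schrödinger representation. Given $a\notin P$, its image in the localization is of the form $z^{-m}\sum_k c_k\otimes w_k$ with $w_k$ linearly independent and some $c_k\neq0$. By Theorem~\ref{theorem:NullstellensatzCommutative} there is a $\chi$ with $\chi(c_k z)\neq 0$, and then $\pi(a)\neq0$ because the Schrödinger representation is faithful on $\Weyl{n}$.

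\textbf{Irreducibility and integrability.} The Schrödinger representation on Schwartz space is irreducible and integrable for the Heisenberg Lie $^*$\=/algebra. Our $\pi(\iota(\lie g))$, however, is a different finite-dimensional space of polynomial differential operators inside $\Weyl{n}$. It generates all of $\Weyl{n}$, so the representation is irreducible, and selfadjointness is inherited once we know the same algebra is represented. We then need $\domAna(\pi,\iota)$ to be dense, which calls for explicit analyticity estimates: Hermite functions (or Gaussians times polynomials) satisfy $\norm{H_\ell\cdots H_1\psi}\le\rho^\ell\ell!$ when the $H_j$ are polynomials of bounded degree in $x,\partial$. Here there is a risk, since for degree $>2$ such estimates generally fail with $\ell!$ and give only $(\ell!)^{d/2}$. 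So we expect the main obstacle to be precisely this integrability step. Our first attempt is to use the nilpotency (triangular structure of $\ad$) to show that the images of a suitable basis of $\lie g$ act on Gaussian-type vectors with factorial growth, as for Kirillov's induced representations. If this fails, we instead identify $\pi$ as the derivative of a unitary representation induced from a character of a polarizing subalgebra. Then integrability is automatic, and Proposition~\ref{proposition:exponentiation} together with Proposition~\ref{proposition:irredintstability} transfers it to $\pi$.
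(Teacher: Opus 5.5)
Your architecture is the paper's: real primes, central localization, a $^*$-version of Dixmier's theorem identifying the localized prime quotient with a Weyl algebra over the center, $^*$-characters from the commutative real Nullstellensatz, specialization, and the Schrödinger representation. The genuine gap is integrability, which you flag as the main obstacle but do not prove, and your first strategy fails as stated. Gaussians only give $(\ell!)^{d/2}$. Isotropic super-Gaussians $\exp(-\sum_j x_j^{2m}/2)$ fail as well: for $G=P_1Q_2^2$ the norm factorizes because $P_1$ and $Q_2$ commute, and $\norm{G^\ell\Omega}$ grows like $(\ell!)^{1+1/(2m)}$ up to exponential factors. Two ideas are missing.

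\textbf{Triangular image of $\lie g$.} The structure isomorphism must control where $\lie g$ goes, not just be a $^*$-isomorphism. Running the Kirillov-lemma induction with Dixmier's formulas $x\mapsto 1\otimes p$ and $u\mapsto\sum_{k}\frac{\I^k}{k!}\iota'(\ad_x^k u)\otimes q^k$ gives $\Lambda(\iota(\lie g))\subseteq\KK\otimes\WeylNil_r$. This means each $p_t$ occurs at most linearly, with coefficients polynomial in $q_{t+1},\dots,q_r$ only. This triangular form survives specialization by a $^*$-character.

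\textbf{Adapted analytic vector.} One needs an anisotropic vector matched to this order, $\Omega(x)=\prod_{j=1}^r\exp\bigl(-x_j^{(2n)^j}/2\bigr)$, with $n$ bounding the degrees. Since $P_t\Omega$ only produces a factor $x_t^{(2n)^t-1}$, each generator raises the weighted degree $\sum_j k_j/(2n)^j$ of $Q^k\Omega$ by at most $1$. Log-convexity of $\GammaFun$ then gives $\norm{G_\ell\cdots G_1\Omega}\le\rho^\ell\,\ell!$.

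Even then, a single analytic vector does not make $\domAna$ dense. The paper uses that $\Omega$ is also analytic for the Heisenberg generators, so $\pi_\cat(\Weyl{r})\,\Omega$ is dense: the projection onto its closure lies in the commutant of the irreducible Schrödinger representation.

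Your fallback via induced representations would need Dixmier's classification to show that $\ker(\pi)$ is the annihilator of a Kirillov representation attached to a real functional. That representation, not $\pi$, would then be the witness. Proposition~\ref{proposition:irredintstability} only transfers integrability between representations on the same pre-Hilbert space with the same represented operator space, which equal kernels do not give.

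Two smaller points. First, finite generation of $\Zentrum(\Univ(\lie g)/P)$ is asserted without justification, and it is not what you need. Theorem~\ref{theorem:NullstellensatzCommutative} must be applied to $\Zentrum[z^{-1}]$ itself, because $^*$-characters of a finitely generated subalgebra need not extend. Finite generation of $\Zentrum[z^{-1}]$ follows from your isomorphism: $(\Univ(\lie g)/P)[z^{-1}]$ is generated by the images of $\lie g$ and $z^{-1}$, and extracting $\Weyl{n}$-coefficients as in Proposition~\ref{proposition:WeylTotallySimple} shows that $\Zentrum[z^{-1}]$ is generated by finitely many coefficients.

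Second, in Step~1 the ``sums-of-squares argument in the prime quotient'' is too vague, but your minimal-prime route closes easily. If $P_1,\dots,P_m$ are the minimal primes over the semiprime ideal $I$, primeness and incomparability give $P_1=(I:P_2\cap\dots\cap P_m)$. Ideal quotients of real ideals are real by Lemma~\ref{lemma:realIdealDecomposition}. This is a shorter alternative to the paper's maximal-counterexample argument.
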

As an immediate consequence of Theorem~\ref{theorem:main} and Proposition~\ref{proposition:mainCorollary}
one obtains the corollary announced in the introduction. In the proof of Theorem~\ref{theorem:main} we will
reduce the problem for nilpotent finite-dimensional Lie $^*$\=/algebras to the classical real Nullstellensatz
in the form of Theorem~\ref{theorem:NullstellensatzCommutative}.

In the following Sections \ref{sec:weyl} to \ref{sec:realprime} we develop some general theory that will be needed
in the proof of Theorem~\ref{theorem:main}, which is then given in Section~\ref{sec:proof}.
Let us outline the main steps for an enveloping $^*$\=/algebra $(\A,\iota)$ of a nilpotent finite-dimensional
Lie $^*$\=/algebra $\lie g$:
Every real ideal of $\A$ is the intersection of finitely many real prime ideals,
which allows us to reduce to the case of real prime ideals, and after dividing out a real prime ideal and localizing
at the center (without $0$) one obtains a formally real $^*$\=/algebra $\B$ whose center $\KK$ is a field extension of $\CC$
(this is discussed in Section~\ref{sec:realprime}).
By adapting some of the well-known structure theory for universal enveloping algebras of nilpotent finite-dimensional Lie algebras
to the $^*$\=/algebraic setting we show that $\B$ is isomorphic to a tensor product of $\KK$ with a Weyl $^*$\=/algebra
$\Weyl{r}$, see Section~\ref{sec:centralEnv}.
The classical real Nullstellensatz in the form of Theorem~\ref{theorem:NullstellensatzCommutative} can be applied to
a suitable finitely generated $^*$\=/subalgebra $\KK_\fin$ of $\KK$ and provides many $^*$\=/characters $\varphi \colon \KK_\fin \to \CC$.
On the other tensor factor $\Weyl{r}$, the usual Schrödinger $^*$\=/representation can be applied. In combination
we obtain many $^*$\=/representations of $\A$.
In the next Section~\ref{sec:weyl} we develop the machinery to show that these are integrable and irreducible.

\section{Schrödinger \texorpdfstring{$^*$\=/representation}{*-representation} of the Weyl  \texorpdfstring{$^*$\=/algebra}{*-algebra}}
\label{sec:weyl}
The \emph{Heisenberg Lie $^*$\=/algebra of rank $r\in \NN_0$} is the Lie $^*$\=/algebra $\Heisenberg{r}$
of dimension $2r+1$ and with basis $c,d_1,\dots,d_r,e_1,\dots,e_r \in \Heisenberg{r}$ such that
\begin{align}
  c&=c^* &&\text{and} &\kom{c}{x} &= 0 &&\text{for all }x\in \Heisenberg{r},\\
  d_j &= d_j^* &&\text{and} &\kom{d_j}{d_k} &= 0 &&\text{for all }j,k\in\{1,\dots,r\},\\
  e_j &= e_j^* &&\text{and} &\kom{e_j}{e_k} &= 0 &&\text{for all }j,k\in\{1,\dots,r\},
\shortintertext{and}
  &&&& \kom{d_j}{e_k} &= -\I \delta_{j,k} c &&\text{for all }j,k\in\{1,\dots,r\}
  ,
\end{align}
where $\delta_{j,k} \in \{0,1\}$ is the Kronecker-$\delta$. Clearly, $\Heisenberg{r}$ is nilpotent;
in fact, $\kom{{\kom{x}{y}}}{z} = 0$ for all $x,y,z\in \Heisenberg{r}$.

The \emph{Weyl $^*$\=/algebra of rank $r\in \NN_0$} is the $^*$\=/algebra that is generated by $2r$
hermitian elements $p_1,\dots,p_r,q_1,\dots, q_r$ subject to the \emph{canonical commutation relations}
\begin{equation}
  \label{eq:ccr}
  \kom{p_j}{p_k} = \kom{q_j}{q_k} = 0\quad\quad\text{and}\quad\quad
  \kom{p_j}{q_k} = -\I \delta_{j,k}
  \quad\quad\text{for all }j,k\in\{1,\dots,r\},
\end{equation}
where $\delta_{j,k} \in \{0,1\}$ is the Kronecker-$\delta$.
Together with the Lie $^*$\=/algebra morphism $\iotaHeisenberg \colon \Heisenberg{r} \to \Weyl{r}$
that is uniquely determined by
\begin{equation}
  \label{eq:iotaCan}
  \iotaHeisenberg(c) \coloneqq 1,
  \quad\quad
  \iotaHeisenberg(d_j) \coloneqq p_j,
  \quad\quad\text{and}\quad\quad
  \iotaHeisenberg(e_j) \coloneqq q_j
  \quad\quad
  \text{for $j\in\{1,\dots,r\}$,}
\end{equation}
the Weyl $^*$\=/algebra is an enveloping $^*$\=/algebra of $\Heisenberg{r}$.

The \emph{Schrödinger $^*$\=/representation} of $\Weyl{r}$ is the $^*$\=/representation
$\bigl( \Schwartz(\RR^r), \pi_\cat \bigr)$ on the Schwartz space $\Schwartz(\RR^r)$ of rapidly decreasing
smooth functions $f\colon \RR^r \to \CC$, with inner product
\begin{equation}
  \skal{f}{g} \coloneqq \int_{\RR^r} \cc{f(x)}\,g(x)\,\D^rx
  \quad\quad\text{(where $\cc{\argument} \colon \CC \to \CC$ denotes complex conjugation),}
\end{equation}
and with $\pi_\cat \colon \Weyl{r} \to \Adbar\bigl( \Schwartz(\RR^r) \bigr)$ defined as the unique $^*$\=/algebra
morphism that fulfills $\pi_\cat(p_j) \coloneqq P_j$ and $\pi_\cat(q_j) \coloneqq Q_j$ for all $j\in\{1,\dots,r\}$,
where $P_j,Q_j \colon \Schwartz(\RR^r) \to \Schwartz(\RR^r)$ are the \emph{momentum operators} and \emph{position operators},
respectively, i.e.,
\begin{equation}
  P_j \,\psi = -\I \frac{\partial \psi}{\partial x_j}
  \quad\quad\text{and}\quad\quad
  \bigl( Q_j \,\psi\bigr)(x) = x_j \,\psi(x)\quad\quad\text{for all}~x\in \RR^r, \psi \in \Schwartz(\RR^r)
  .
\end{equation}
One readily checks that these are hermitian operators and that, analogously to \eqref{eq:ccr},
\begin{equation}
  \label{eq:CCR}
  \kom{P_j}{P_k} = \kom{Q_j}{Q_k} = 0\quad\quad\text{and}\quad\quad
  \kom{P_j}{Q_k} = -\I \delta_{j,k}
  \quad\quad\text{for all }j,k\in\{1,\dots,r\}.
\end{equation}
We will make use of the following properties of the Schrödinger $^*$\=/representation:

\begin{proposition} \label{proposition:Schroedinger}
  Let $r\in \NN_0$.
  \begin{enumerate}
    \item \label{item:Schroedinger:faithful}
      The Schrödinger $^*$\=/representation $\bigl(\Schwartz(\RR^r),\pi_\cat\bigr)$ of $\Weyl{r}$ is faithful.
    \item \label{item:Schroedinger:irredintegrable}
      The Schrödinger $^*$\=/representation $\bigl(\Schwartz(\RR^r),\pi_\cat\bigr)$ is an irreducible integrable
      $^*$\=/representation of the enveloping $^*$\=/algebra $(\Weyl{r},\iotaHeisenberg)$ of the Heisenberg
      Lie $^*$\=/algebra $\Heisenberg{r}$.
    \item \label{item:Schroedinger:denseOrbit}
      Consider any $\Omega \in \domAna(\pi_\cat,\iotaHeisenberg) \setminus \{0\}$.
      Then $\set{\pi_\cat(a)\,\Omega}{a\in\Weyl{r}}$ is a $\norm{\argument}$\=/dense linear subspace of $\Schwartz(\RR^r)$.
  \end{enumerate}
\end{proposition}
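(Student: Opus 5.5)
I will prove the three parts in order, relying throughout on the Hermite functions. For $\alpha\in\NN_0^r$ write $h_\alpha(x) = \prod_{j=1}^r h_{\alpha_j}(x_j)$, where $h_n$ is the $n$-th normalized Hermite function on $\RR$. It is classical that $\{h_\alpha\}_\alpha$ is an orthonormal basis of $L^2(\RR^r)$ consisting of Schwartz functions. With the annihilation and creation operators $A_j = \frac{1}{\sqrt2}(Q_j + \I P_j)$ and $A_j^\dagger = \frac{1}{\sqrt2}(Q_j - \I P_j)$ one has $A_j h_\alpha = \sqrt{\alpha_j}\, h_{\alpha-e_j}$ and $A_j^\dagger h_\alpha = \sqrt{\alpha_j+1}\, h_{\alpha+e_j}$.

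\emph{Faithfulness.} Every $a\in\Weyl{r}$ can be written in normal-ordered form $a = \sum_{\beta,\gamma} c_{\beta\gamma}\, q^\beta p^\gamma$, using \eqref{eq:ccr}. Then $\pi_\cat(a) = \sum_{\beta,\gamma} c_{\beta\gamma}\, x^\beta (-\I\partial)^\gamma$ is a differential operator with polynomial coefficients. Suppose $\pi_\cat(a)=0$. Apply it to the Gaussians $e^{-\abs{x-y}^2}$, or more simply to the functions $e^{\I\skal{k}{x}}\chi(x)$ with $\chi$ a Schwartz function equal to $1$ on an open set $U$. On $U$ this gives $\sum c_{\beta\gamma}\, x^\beta k^\gamma = 0$ for all $x\in U$ and $k\in\RR^r$. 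Hence this polynomial vanishes identically and all $c_{\beta\gamma}=0$. This uses that the normal-ordered monomials span $\Weyl{r}$; linear independence is then a consequence of the argument itself.

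\emph{Integrability and irreducibility.} $\pi_\cat(\iotaHeisenberg(\Heisenberg{r}))$ is spanned by $\id$, $P_j$, $Q_j$, which are combinations of $A_j$ and $A_j^\dagger$. The lowering/raising formulas give, for any $\ell$ words $H_1,\dots,H_\ell$ from this basis,
\[
\norm{H_\ell\cdots H_1 h_\alpha} \le \prod_{i=1}^{\ell}\sqrt{2(\abs{\alpha}+i)} \le C^\ell\sqrt{(\abs{\alpha}+\ell)!} .
\]
Since $\sqrt{(\abs{\alpha}+\ell)!}\le \rho^\ell\,\ell!$ for a suitable $\rho$ depending on $\abs{\alpha}$, every $h_\alpha$ is analytic. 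Thus the span of the Hermite functions lies in $\domAna(\pi_\cat,\iotaHeisenberg)$, and it is dense.

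For selfadjointness I need $\dom(\pi_\cat^\dagger)\subseteq\Schwartz(\RR^r)$. Take $\phi\in L^2$ lying in the domain of the adjoint of $\pi_\cat(N^k)$ for all $k$, where $N=\sum_j(P_j^2+Q_j^2)$. Expanding $\phi=\sum_\alpha c_\alpha h_\alpha$, this forces $\sum_\alpha (2\abs{\alpha}+r)^{2k}\abs{c_\alpha}^2<\infty$ for all $k$. These coefficient decay conditions characterize $\Schwartz(\RR^r)$ (the Hermite characterization of Schwartz space), so $\phi\in\Schwartz(\RR^r)$.

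For irreducibility, let $C$ be in the bounded commutant. Then $C$ commutes with $A_j^\dagger A_j$ on $\Schwartz(\RR^r)$, so it preserves their joint eigenspaces, which are one-dimensional. Hence $C h_\alpha = \lambda_\alpha h_\alpha$. Commuting with $A_j^\dagger$ gives $\lambda_{\alpha+e_j} = \lambda_\alpha$, so all $\lambda_\alpha$ equal a common $\lambda$. By boundedness and density, $C=\lambda\,\id$.

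\emph{Dense orbit.} This is the step I expect to be the main obstacle. Let $\Omega\neq0$ be analytic and let $\Eom$ be the closure of $\{\pi_\cat(a)\Omega\}$ in $L^2$. I plan to use integrability. By Nelson's theorem, the analytic vector $\Omega$ satisfies
\[
U(g)\Omega = \sum_{n\ge0} \frac{1}{n!}\,\bigl(\I\,\pi_\cat(X)\bigr)^n\Omega
\]
for small $X$ in the real Lie algebra, with the series converging in norm. Hence $U(g)\Omega\in\Eom$ for $g$ near the identity. Since $\Omega$ stays analytic under the orbit, this extends by connectedness to the whole group $G$ (the Heisenberg group acting by Weyl operators). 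So $\Eom$ is a nonzero closed $U(G)$-invariant subspace. The Schrödinger representation of the Heisenberg group is irreducible (Stone–von Neumann; equivalently, part \ref{item:Schroedinger:irredintegrable} combined with \cite[Cor.~10.2.18]{schmuedgen:UnboundedOperatorAlgebraAndRepresentationTheory}). Therefore $\Eom = L^2(\RR^r)$, and in particular the orbit is dense in $\Schwartz(\RR^r)$ with respect to $\norm{\argument}$. The delicate points are justifying the exponential series at every point and the connectedness extension. If these cause trouble, the fallback is to show directly that the orthogonal projection onto $\Eom$ lies in the bounded commutant of the integrable representation, and then apply part \ref{item:Schroedinger:irredintegrable}.
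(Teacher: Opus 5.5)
Your proposal is correct in substance, but for part~\ref{item:Schroedinger:denseOrbit} your main route differs from the paper's. For parts~\ref{item:Schroedinger:faithful} and \ref{item:Schroedinger:irredintegrable}, the paper simply cites Schmüdgen. Your arguments for these two parts are correct and make the proposition self-contained: normal ordering plus test functions $e^{\I k\cdot x}\chi$ for faithfulness, Hermite functions for analytic vectors, the Hermite characterization of $\Schwartz(\RR^r)$ for selfadjointness, and the number operators $A_j^\dagger A_j$ for the commutant.

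For part~\ref{item:Schroedinger:denseOrbit}, you exponentiate to the unitary representation $U$ of the Heisenberg group and use its irreducibility. The paper instead stays inside the $O^*$-algebra framework. It restricts $\pi_\cat$ to the cyclic space $\Dom=\set{\pi_\cat(a)\,\Omega}{a\in\Weyl{r}}$, all of whose vectors are analytic. Proposition~\ref{proposition:exponentiation} then makes the adjoint representation $(\pi_\cat|_{\Dom})^\dagger$ an integrable $^*$-representation. With this, the paper shows that the orthogonal projection $P$ onto $\overline{\Dom}$ satisfies $P\phi\in\dom(\pi_\cat^\dagger)=\Schwartz(\RR^r)$ and $\pi_\cat(a)\,P\phi=P\,\pi_\cat(a)\,\phi$. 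Hence $P$ lies in the bounded commutant and equals $\id$ by part~\ref{item:Schroedinger:irredintegrable}.

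This is exactly your fallback. The paper's version is more economical: the group is hidden inside Proposition~\ref{proposition:exponentiation}, and only the commutant definition of irreducibility is needed. Yours is more conceptual, but it needs Nelson's exponential formula, analyticity of orbit maps, and the equivalence of irreducibility for $\pi_\cat$ and for $U$.

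One step of your main route needs repair: extending $U(g)\,\Omega\in\Eom$ from a neighbourhood of $e$ to all of $G$. The reason you give, that each $\pi_\cat(a)\,\Omega$ is again analytic, does not suffice. The radius of convergence depends on $a$. Moreover, $U(g)\,\Omega$ for $g$ near $e$ lies in $\Eom$ but not in the orbit, so the series argument cannot be iterated.

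What works is the following. For $X$ in a fixed ball, $\sum_n\frac{1}{n!}\pi_\cat(X)^n\Omega$ is an absolutely convergent power series in the coordinates of $X$. So $g\mapsto U(g)\,\Omega$ is real-analytic near $e$, and hence on all of $G$, since $U(g_0g)\,\Omega=U(g_0)\,U(g)\,\Omega$ with $U(g_0)$ bounded. For $\eta\perp\Eom$, the real-analytic function $g\mapsto\skal{\eta}{U(g)\,\Omega}$ vanishes near $e$, hence on the connected group $G$.

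This gives only $U(G)\,\Omega\subseteq\Eom$, not $U(G)$-invariance of $\Eom$ as you claim. So apply irreducibility of $U$ to the closed span of $U(G)\,\Omega$ instead: it is invariant, nonzero, and contained in $\Eom$.
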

\begin{proof}
  See, e.g., the references
  \cite[Exs.~2.5.2,10.2.14]{schmuedgen:UnboundedOperatorAlgebraAndRepresentationTheory} or
  \cite[Exs.~4.32, 9.33 and Prop.~8.2]{schmuedgen:invitationToStarAlgebras}
  for parts~\ref{item:Schroedinger:faithful} and \ref{item:Schroedinger:irredintegrable}.

  For part~\ref{item:Schroedinger:denseOrbit}, note that $\Dom \coloneqq \set{\pi_\cat(a)\,\Omega}{a\in\Weyl{r}}$ is a
  $\pi_\cat$\=/invariant linear subspace of $\domAna(\pi_\cat,\iotaHeisenberg)$, so we can restrict the Schrödinger
  $^*$\=/representation to $\Dom$ and obtain
  a $^*$\=/representation $(\Dom,\pi_\cat\at{\Dom})$ of $\Weyl{r}$. Clearly $\domAna(\pi_\cat\at{\Dom},\iotaHeisenberg) = \Dom$.
  We identify the $\norm{\argument}$\=/completion $\Hilb$ of $\Dom$ with the $\norm{\argument}$\=/closure of $\Dom$ in the
  $\norm{\argument}$\=/completion $\Ell^2(\RR^r)$ of $\Schwartz(\RR^r)$. Let $P \colon \Ell^2(\RR^r) \to \Ell^2(\RR^r)$
  be the orthogonal projection onto this closed linear subspace $\Hilb$ of $\Ell^2(\RR^r)$. First we show that
  $P\,\phi \in \Schwartz(\RR^r)$ for all $\phi \in \Schwartz(\RR^r)$
  and $\pi_\cat(a)\,P\,\phi = P\,\pi_\cat(a)\,\phi$ for all $a\in \Weyl{r}$, $\phi \in \Schwartz(\RR^r)$:

  Recall that $\Dom \subseteq \dom\bigl(\pi_\cat\at{\Dom}^\dagger\bigr) \subseteq \Hilb$.
  By Proposition \ref{proposition:exponentiation}, there is an integrable (and in particular, selfadjoint)
  $^*$\=/representation
  $\bigl( \dom(\pi_\cat\at{\Dom}^\dagger), \pi_\cat\at{\Dom}^\dagger \bigr)$ of $(\Weyl{r},\iotaHeisenberg)$.
  Consider any $\phi \in \Schwartz(\RR^r)$. Then for all $a\in \A$ and $\psi \in \Dom$, the identity
  \begin{equation*}
    \skal[\big]{P\,\phi}{\pi_\cat\at{\Dom}(a^*)\,\psi}
    =
    \skal[\big]{\phi}{\pi_\cat\at{\Dom}(a^*)\,\psi}
    =
    \skal[\big]{\pi_\cat(a)\,\phi}{\psi}
    =
    \skal[\big]{P\,\pi_\cat(a)\,\phi}{\psi}
  \end{equation*}
  holds, because $\psi \in \Dom \subseteq \Hilb$ and
  $\pi_\cat\at{\Dom}(a^*)\,\psi = \pi_\cat(a^*)\,\psi \in \Dom \subseteq \Hilb$. This shows that
  $P\,\phi \in \dom\bigl(\pi_\cat\at{\Dom}^\dagger\bigr)$ and $\pi_\cat\at{\Dom}^\dagger(a)\,P\,\phi = P\,\pi_\cat(a)\,\phi$
  for all $a\in \Weyl{r}$.
  Consider again any $\phi \in \Schwartz(\RR^r)$. Then for all $a\in \Weyl{r}$ and $\psi \in \Schwartz(\RR^r)$, the identity
  \begin{equation*}
    \skal[\big]{P\,\phi}{\pi_\cat(a^*)\,\psi}
    =
    \skal[\big]{P\,\phi}{\pi_\cat\at{\Dom}^\dagger(a^*)\,P\,\psi}
    =
    \skal[\big]{ \pi_\cat\at{\Dom}^\dagger(a)\,P\,\phi}{P\,\psi}
    =
    \skal[\big]{ P\,\pi_\cat(a)\,\phi}{\psi}
  \end{equation*}
  holds, because $\pi_\cat\at{\Dom}^\dagger(a^*)\,P\,\psi = P\,\pi_\cat(a^*)\,\psi$ and
  $\pi_\cat\at{\Dom}^\dagger(a)\,P\,\phi = P\,\pi_\cat(a)\,\phi$.
  This shows that $P\,\phi \in \dom(\pi_\cat^\dagger)$
  and $\pi_\cat^\dagger(a)\,P\,\phi = P\,\pi_\cat(a)\,\phi$ for all $a\in \Weyl{r}$.
  As $\bigl( \Schwartz(\RR^r), \pi_\cat \bigr)$ is an integrable $^*$\=/representation if $(\Weyl{r},\iotaHeisenberg)$,
  and in particular selfadjoint, it follows that indeed
  $P\,\phi \in \Schwartz(\RR^r)$ and $\pi_\cat(a)\,P\,\phi = P\,\pi_\cat(a)\,\phi$ for all $a\in \Weyl{r}$.

  We have shown that the orthogonal projection $P$ can be restricted to
  $P\at{\Schwartz(\RR^r)} \colon \Schwartz(\RR^r) \to \Schwartz(\RR^r)$,
  and that $P\at{\Schwartz(\RR^r)}$ is in the bounded commutant $\pi_\cat(\Weyl{r})'$.
  By irreducibility of the Schrödinger $^*$\=/representation,
  $P\at{\Schwartz(\RR^r)} = \lambda \id_{\Schwartz(\RR^r)}$ for some $\lambda \in \CC$. As $P\at{\Schwartz(\RR^r)}$
  is a projection onto a nonzero linear subspace of $\Schwartz(\RR^r)$ (e.g., $\Omega$ is in the image of $P\at{\Schwartz(\RR^r)}$),
  $P\at{\Schwartz(\RR^r)} = \id_{\Schwartz(\RR^r)}$. It follows that $\Schwartz(\RR^r) \subseteq \Hilb$, and therefore
  $\Dom$ is $\norm{\argument}$\=/dense in $\Schwartz(\RR^r)$.
\end{proof}
However, $\Weyl{r}$ can also be viewed as an enveloping $^*$\=/algebra of other nilpotent Lie $^*$\=/algebras $\lie g$
by choosing other Lie $^*$\=/algebra morphisms $\iota \colon \lie g \to \Weyl{r}$.
In this section we are going to show that the Schrödinger $^*$\=/representation remains an irreducible integrable
$^*$\=/representation of $( \Weyl{r}, \iota )$ for many other possible choices of $\iota$.

\begin{definition} \label{definition:WeylNil}
  Let $r \in \NN_0$. We write
  \begin{equation}
    \WeylNil_r
    \coloneqq
    \set[\bigg]{
      a_0 + \sum_{j=1}^r p_ja_j
    }{
      a_j\in \genSAlg{q_{j+1},\dots,q_r} \text{ for } j\in\{0,\dots,r\}
    }
    ,
  \end{equation}
  where $ \genSAlg{q_{j+1},\dots,q_r}$ is the $^*$\=/subalgebra of $\Weyl{r}$ that is generated by $q_{j+1},\dots,q_r$,
  i.e., the set of linear combinations of arbitrary products (including the empty product $1$) of $q_{j+1},\dots,q_r$.
\end{definition}
Clearly, $\WeylNil_r$ is a linear subspace of $\Weyl{r}$.
Note also that $a_j p_j = p_j a_j$ for all $j\in \{1,\dots,r\}$ and $a_j\in \genSAlg{q_{j+1},\dots,q_r}$;
therefore, $\WeylNil_r$ is stable under the $^*$\=/involution.
One can also check that $\WeylNil_r$ is a (locally nilpotent) Lie $^*$\=/subalgebra of $\Weyl{r}$,
but we won't make use of this fact as we are only interested in finite-dimensional Lie $^*$\=/subalgebras
of $\Weyl{r}$ that are contained in $\WeylNil_r$.

\begin{example}
  Let $\lie w_5$ be the $5$-dimensional truncated Witt Lie $^*$\=/algebra, i.e., $\lie w_5$ has basis $e_1,\dots,e_5$,
  and $e_k^* = e_k$ and $\kom{e_k}{e_\ell} \coloneqq \I (\ell-k) e_{k+\ell}$ for all $k,\ell \in \{1,\dots, 5\}$,
  where we set $e_j \coloneqq 0$ for $j>5$.
  Let $\iota \colon \lie w_5 \to \Weyl{2}$ be the linear map that is determined by
  \begin{align*}
    e_1 \mapsto \iota(e_1) &\coloneqq -\sqrt{6} p_1 \\
    e_2 \mapsto \iota(e_2) &\coloneqq \frac{1}{6} q_1^3 + q_1 q_2 - p_2 \\
    e_3 \mapsto \iota(e_3) &\coloneqq \sqrt{\frac32} q_1^2 + \sqrt{6} q_2 \\
    e_4 \mapsto \iota(e_4) &\coloneqq 3q_1 \\
    e_5 \mapsto \iota(e_5) &\coloneqq \sqrt{6}.
  \end{align*}
  One can check that $\iota(e_k)^* = \iota(e_k)$ and
  $\kom{\iota(e_k)}{\iota(e_\ell)} = \iota\bigl( \kom{e_k}{e_\ell} \bigr)$ for all $k,\ell \in \{1,\dots,5\}$,
  so $\iota$ is a Lie $^*$\=/algebra morphism. It is also not hard to check that
  $\Weyl{2}$ is generated (as a $^*$\=/algebra) by $\iota(\lie w_5)$ (it suffices to check the generators $p_1,p_2,q_1,q_2$),
  i.e., $(\Weyl{2},\iota)$
  is an enveloping $^*$\=/algebra of $\lie{w}_5$. By the following Theorem~\ref{theorem:integrable},
  the Schrödinger $^*$\=/representation $\bigl( \Schwartz(\RR^2), \pi_\cat \bigr)$
  is an irreducible integrable $^*$\=/representation of $(\Weyl{2},\iota)$.
\end{example}

\begin{theorem} \label{theorem:integrable}
  Let $r\in \NN_0$ and let $\bigl(\Weyl{r}, \iota\bigr)$ be an enveloping $^*$\=/algebra of
  some finite-dimensional Lie $^*$\=/algebra $\lie g$.
  Assume that $\iota(\lie g) \subseteq \WeylNil_r$. Then the Schrödinger $^*$\=/representation
  $\bigl( \Schwartz(\RR^r), \pi_\cat \bigl)$ is an irreducible integrable $^*$\=/representation
  of $\bigl(\Weyl{r}, \iota\bigr)$.
\end{theorem}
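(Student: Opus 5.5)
Since $\iota(\lie g)$ generates $\Weyl{r}$, we have $\pi_\cat(\Weyl{r})$ unchanged, so selfadjointness and triviality of the bounded commutant carry over from Proposition~\ref{proposition:Schroedinger}, part~\ref{item:Schroedinger:irredintegrable}: the domain $\dom(\pi_\cat^\dagger)$ and the commutant $\pi_\cat(\Weyl{r})'$ depend only on the image $\pi_\cat(\Weyl{r})$. By the Nelson criterion recalled before Proposition~\ref{proposition:exponentiation}, it therefore only remains to show that $\domAna(\pi_\cat,\iota)$ is $\norm{\argument}$\=/dense in $\Schwartz(\RR^r)$. By Proposition~\ref{proposition:Schroedinger}, part~\ref{item:Schroedinger:denseOrbit}, it suffices to find a single nonzero $\Omega$ that is analytic both for $(\pi_\cat,\iotaHeisenberg)$ and for $(\pi_\cat,\iota)$. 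Indeed, $\domAna(\pi_\cat,\iota)$ is $\pi_\cat$\=/invariant, so it then contains the dense orbit $\set{\pi_\cat(a)\,\Omega}{a\in\Weyl{r}}$.

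The candidate for $\Omega$ is a function with compactly supported Fourier transform in the $p$\=/variables, or more concretely a Gaussian-type vector adapted to the triangular structure of $\WeylNil_r$. I would first try $\Omega$ with $\hat\Omega\in C_c^\infty$; such vectors are analytic for the Heisenberg generators, and $Q_j$ acts as a derivative in Fourier space. The cleaner route, though, is to exploit the structure directly. Each element of $\iota(\lie g)$ has the form $a_0+\sum_j p_j a_j$ with $a_j$ polynomial in $q_{j+1},\dots,q_r$. Hence the operators $\pi_\cat(\iota(x))$ are first-order differential operators $-\I\sum_j a_j(x_{j+1},\dots,x_r)\,\partial_j + a_0(x)$, whose vector field is triangular: the coefficient of $\partial_j$ depends only on later variables.

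The key estimate proceeds by induction on $j$, from $r$ down to $1$. I take $\Omega(x)=\prod_j \omega_j(x_j)$, or better an $\Omega$ that is a product of entire functions with suitable decay. The main obstacle is controlling $\norm{H_\ell\cdots H_1\Omega}\le\rho^\ell\ell!$, since the polynomial coefficients $a_0,a_j$ have arbitrary degree and naive bounds give $(\ell!)^{\deg}$ growth. To overcome this I would use the exponentiated picture instead of word estimates: the flows of the triangular vector fields are polynomial maps, explicitly solvable step by step because $\partial_j$ has coefficients independent of $x_1,\dots,x_j$. The potential terms integrate to phases $\exp(\I\,\text{polynomial})$. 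Thus the one-parameter unitary groups $\exp(\I t\,\overline{\pi_\cat(\iota(x))})$ are explicit: a polynomial change of variables composed with multiplication by a unimodular polynomial phase and a Jacobian factor, which is $1$ in the relevant determinant since the Jacobian is triangular with unit diagonal. They preserve $\Schwartz(\RR^r)$ and compose to a unitary representation of the simply connected nilpotent group.

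Alternatively, and more in the paper's spirit, one shows that a vector of the form $\Omega(x)=\exp(-\sum_j \cosh$-type or $e^{-x_j^{2N}}$ weights$)$ is analytic. Concretely, I would aim for analytic vectors with $\norm{H_\ell\cdots H_1\Omega}\le \rho^\ell \ell!$ by choosing $\Omega$ whose holomorphic extension to a complex tube $\set{z}{\abs{\IM z}<\epsilon}$ has superpolynomial decay in every variable. An $\ell$\=/fold word is then estimated via Cauchy estimates on polydiscs of radius $\sim 1/\ell$ weighted so that the polynomial coefficients are absorbed by the decay. The grading by the triangular structure (assign weights so that each $H$ has bounded degree) is where I expect the real work to be. Finally, analyticity for $\iotaHeisenberg$ holds for the same $\Omega$ by the same estimate, since $\iotaHeisenberg(\Heisenberg{r})\subseteq\WeylNil_r$ as well.
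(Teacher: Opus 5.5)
Your reduction is exactly the paper's. Selfadjointness and triviality of the bounded commutant depend only on $\pi_\cat(\Weyl{r})$. By $\pi_\cat$\=/invariance of $\domAna(\pi_\cat,\iota)$ and the dense-orbit part of Proposition~\ref{proposition:Schroedinger}, it suffices to find one $\Omega\neq0$ that is analytic for both $\iota$ and $\iotaHeisenberg$. But producing that $\Omega$ is the whole analytic content of the theorem, and you do not do it: you list candidates and defer ``the real work''.

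The concrete candidates you name also fail.
\begin{itemize}
\item If $\hat\Omega\in C_c^\infty$, then $\norm{Q_j^\ell\Omega}\le\rho^\ell\ell!$ forces $e^{t\abs{x_j}}\Omega\in L^2$ for some $t>0$. Then $\hat\Omega$ is real-analytic in $\xi_j$ and hence $0$, so such vectors are never analytic for the Heisenberg generators.
\item Weights that decay equally fast in every variable also fail. Take $r=2$ and let $\lie g$ be the span of $1,q_1,q_2,q_2^2,p_1,p_2,p_1q_2,p_1q_2^2$. This is a nilpotent Lie $^*$\=/subalgebra of $\Weyl{2}$, contained in $\WeylNil_2$ and generating $\Weyl{2}$. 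Set $\Omega=\omega(x_1)\,\omega(x_2)$. Then $\norm{(P_1Q_2^2)^\ell\Omega}=\norm{\omega^{(\ell)}}\,\norm{s^{2\ell}\omega}$. For $\omega(s)=e^{-s^{2N}/2}$ this grows like $(\ell!)^{1-1/(2N)}(\ell!)^{1/N}=(\ell!)^{1+1/(2N)}$ up to geometric factors, for every $N$. For $\omega(s)=e^{-\cosh s}$, already $P_1Q_2$ gives $\ell!$ times the superexponential factor $(\log\ell)^\ell$.
\item In the exponentiated picture, explicit one-parameter unitary groups for the individual generators do not show that they ``compose to a unitary representation'' of $G$. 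Nelson's classical example shows that essentially selfadjoint operators commuting on a common invariant core need not have commuting unitary groups, and this composition is precisely the content of integrability. To make that route work you would have to integrate the whole Lie algebra of triangular vector fields to a global group action with a phase cocycle. You would then need to show that $\Schwartz(\RR^r)$ consists of smooth vectors on which $dU$ agrees with $\pi_\cat\circ\iota$, and invoke selfadjointness of $\pi_\cat$ to get equality. None of this is done, and it would bypass rather than supply the analytic vector your reduction calls for.
\end{itemize}

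The missing idea is what makes the word estimate close; you only gesture at it with ``a grading by the triangular structure''. The decay of $\Omega$ must become strictly and sufficiently faster along the triangular order. Then the moments in $x_{j'}$, $j'>j$, created by a generator $P_jQ^k$ (with $k_i=0$ for $i\le j$) are paid for by the sub-factorial growth of the $x_j$\=/derivatives.

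The paper takes $\Omega(x)=\prod_{j=1}^r\exp(-x_j^{(2n)^j}/2)$, where $n$ bounds the $Q$\=/degrees occurring in $\pi_\cat(\iota(\lie g))$. It filters the polynomial multiples of $\Omega$ by the weighted degree $\sum_jk_j/(2n)^j$. Because $P_t\Omega=\frac{\I(2n)^t}{2}Q_t^{(2n)^t-1}\Omega$, every $G\in\SchroedNilGen_n$ raises this weighted degree by at most $1$; this is where the geometric growth of the exponents is used. In the $\GammaFun$\=/weighted norm $\norm{\argument}_\approx\ge\norm{\argument}$, log-convexity of $\GammaFun$ bounds the cost of each step out of $\Dom_d$ by $3(2n)^r(d+1)$. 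This yields $\norm{G_\ell\cdots G_1\Omega}\le(3(2n)^r)^\ell\,\ell!\,\norm{\Omega}_\approx$. Since the Heisenberg generators lie in $\SchroedNilGen_1\subseteq\SchroedNilGen_n$, the same $\Omega$ then also works for $\iotaHeisenberg$, as you anticipated.
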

The $r=0$\,-case is clear and we will only discuss the $r>0$\,-case in the following.

Note that it is well known that (up to unitary equivalence) every irreducible integrable $^*$\=/representation
of an enveloping $^*$\=/algebra of a nilpotent finite-dimensional Lie $^*$\=/algebra can be described
by polynomial differential operators acting on a Schwartz space $\Schwartz(\RR^m)$ for some $m\in\NN_0$,
see e.g.~\cite[Sec.~5]{pedersen:geometricQuantizationAndTheUniversalEnvelopingAlgebraOfANilpotentLieGroup}.
Theorem~\ref{theorem:integrable} provides a partial converse, guaranteeing integrability of certain $^*$\=/representations
by polynomial differential operators. The proof will occupy the rest of this section,
throughout which we keep the notation and assumptions of Theorem~\ref{theorem:integrable}.
We also use standard multiindex notation, namely, we write
\begin{equation}
  Q^k \coloneqq \prod_{j=1}^r Q_j^{k_j} \in \Adbar\bigl( \Schwartz(\RR^r) \bigr)\quad\quad\text{for }k\in\NN_0^r.
\end{equation}
The following definitions and intermediary results are only relevant for the proof of Theorem~\ref{theorem:integrable},
i.e., until the end of this section.

\begin{definition}
  Let $m\in \NN$. We write $\SchroedNilGen_m \coloneqq \bigcup_{t=0}^r \SchroedNilGen_{m;t}$
  where, for $t\in \{1,\dots,r\}$,
  \begin{equation}
    \SchroedNilGen_{m;t}
    \coloneqq
    \set[\Big]{
      \!P_t Q^k
    }{
      k \in \NN_0^r\text{ such that } k_j = 0 \text{ for }j\in\{1,\dots,t\}\text{ and }k_j \le m\text{ for }j \in \{t+1,\dots,r\}\!
    },\!
  \end{equation}
  and similarly,
  \begin{equation}
    \SchroedNilGen_{m;0}
    \coloneqq
    \set[\Big]{ \!Q^k }{ k \in \NN_0^r\text{ such that } k_j \le m\text{ for }j\in\{1,\dots,r\}\! }
    .
  \end{equation}
\end{definition}
Clearly $\SchroedNilGen_m \subseteq \SchroedNilGen_{m'}$ for all $m,m' \in \NN$ with $m\le m'$, and
$\bigcup_{m\in \NN} \SchroedNilGen_m$ is a basis of $\pi_\cat(\WeylNil_r)$.
It follows that for sufficiently large $m \in \NN$, the finite-dimensional linear subspace
$\pi_\cat(\iota(\lie g))$ of $\pi_\cat(\WeylNil_r)$ is contained in the linear span of $\SchroedNilGen_m$.

\begin{definition}
  We choose $n$ as the smallest element of $\NN$ such that
  $\pi_\cat(\iota(\lie g))$ is contained in the linear span of $\SchroedNilGen_n$.
  Then, we define the function $\Omega \in \Schwartz(\RR^r)$,
  \begin{equation}
    \RR^r \ni x \mapsto \Omega(x) \coloneqq
    \prod_{j=1}^r \exp\biggl( - \frac{x_j^{(2n)^j}}{2} \biggr)
    ,
  \end{equation}
  the (finite) sets of multiindices
  \begin{equation}
    K_d
    \coloneqq
    \set[\bigg]{k \in \NN_0^r}{ \sum_{j=1}^r \frac{k_j}{(2n)^j} \le d }
    \quad\quad\text{for }d\in \NN_0
    ,
  \end{equation}
  and the linear subspace
  \begin{equation}
    \Dom_d
    \coloneqq
    \set[\bigg]{
      \sum_{k \in K_d} f_k \,Q^k\, \Omega
    }{
      f_k \in \CC \text{ for }k\in K_d
    }
    \subseteq
    \Schwartz(\RR^r)
    \quad\quad\text{for }d\in \NN_0
    .
  \end{equation}
  We also define
  \begin{equation}
    \Dom_\infty \coloneqq \bigcup_{d\in \NN_0} \Dom_d \subseteq \Schwartz(\RR^r)
    .
  \end{equation}
\end{definition}
Clearly $K_d \subseteq K_{d'}$ for $d,d' \in \NN_0$ with $d\le d'$, so $(\Dom_d)_{d\in \NN_0}$
is an increasing sequence of linear subspaces of $\Schwartz(\RR^r)$ and therefore $\Dom_\infty$
is also a linear subspace of $\Schwartz(\RR^r)$.
This space $\Dom_\infty$ consists of all products of a polynomial function on $\RR^r$ with $\Omega$.
In particular, every element $f\in\Dom_\infty$ can be expanded in a unique way as
$f = \sum_{k\in \NN_0^r} f_k\,Q^k\,\Omega$ with coefficients $f_k \in \CC$ (and only finitely many of them non-zero).
Note that, $t\in \{1,\dots,r\}$,
\begin{align}
  \bigl(P_t\,\Omega\bigr)(x)
  =
  -\I \frac{\partial}{\partial x_t}
  \prod_{j=1}^r \exp\biggl( - \frac{x_j^{(2n)^j}}{2} \biggr)
  &=
  \frac{\I (2n)^t \,x_t^{(2n)^t-1}}{2}
  \prod_{j=1}^r \exp\biggl( - \frac{x_j^{(2n)^j}}{2} \biggr)
  \quad\quad\text{for all } x\in\RR^r,
\shortintertext{so}
  \label{eq:PtOmega}
  P_t\,\Omega
  &=
  \frac{\I (2n)^t}{2} \,Q_t^{(2n)^t-1}\,\Omega
  .
\end{align}
Consider any $m\in \NN$. By substituting $y \coloneqq x^{2m}$ one finds that
\begin{equation}
  \int_{-\infty}^\infty x^{2k} \exp\bigl(-x^{2m} \bigr)\,\D x
  =
  \frac{1}{m} \GammaFun\biggl( \frac{2k+1}{2m} \biggr)
  \le
  \GammaFun\biggl( \frac{2k+1}{2m} \biggr)
  \quad\quad\text{for all }k\in \NN_0
  .
\end{equation}
Consequently,
\begin{equation}
  \label{eq:integral}
  \norm[\big]{Q^k\,\Omega}
  =
  \prod_{j=1}^r \biggl( \int_{-\infty}^\infty x_j^{2k_j} \exp\Bigl( - x_j^{(2n)^j}\Bigr)\,\D x_j \biggr)^{\!\frac12}
  \le
  \prod_{j=1}^r \GammaFun\biggl( \frac{2k_j+1}{(2n)^j} \biggr)^{\!\frac12}
  .
\end{equation}

\begin{definition}
  We define the norm $\norm{\argument}_{\approx}$ on $\Dom_\infty$ as
  \begin{equation}
    \norm{f}_{\approx}
    \coloneqq
    \sum_{k \in \NN_0^r} \abs{f_k} \prod_{j=1}^r \GammaFun\biggl( \frac{2k_j+1}{(2n)^j} \biggr)^{\!\frac12}
    \quad\text{for }f = \sum_{k \in \NN_0^r} f_k \,Q^k\, \Omega \in \Dom_\infty \text{ with coefficients $f_k \in \CC$}
    .
  \end{equation}
\end{definition}
By \eqref{eq:integral},
\begin{equation}
  \label{eq:normestimate}
  \norm{f}
  \le
  \sum_{k \in \NN_0^r} \abs{f_k}\, \norm[\big]{ Q^k \,\Omega}
  \le
  \sum_{k \in \NN_0^r} \abs{f_k} \prod_{j=1}^r \GammaFun\biggl( \frac{2k_j+1}{(2n)^j} \biggr)^{\!\frac12}
  =
  \norm{f}_\approx
  \quad\quad\text{for all }f \in \Dom_\infty
  .
\end{equation}
In the following it will be easier to estimate the norm $\norm{\argument}_\approx$ rather than the Hilbert norm $\norm{\argument}$.
As a preparation we need some elementary properties of the Gamma function:

\begin{lemma} \label{lemma:GammaEstimate}
  Let $\beta \in {]0,\infty[}$. For all $x_1,\dots,x_r \in {]0,\beta]}$ and $\lambda_1,\dots,\lambda_r \in [0,1]$
  that fulfill $\sum_{j=1}^r \lambda_j \le 1$, the estimate
  \begin{equation}
    \prod_{j=1}^r \GammaFun\bigl(x_j+2\lambda_j\bigr)
    \le
    (\beta + 1)^2 \prod_{j=1}^r \GammaFun(x_j)
  \end{equation}
  holds.
\end{lemma}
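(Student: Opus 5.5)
The plan is to reduce the estimate to a one-variable bound for each factor, using log-convexity of the Gamma function together with the functional equation $\GammaFun(x+1) = x\,\GammaFun(x)$. The constraint $\sum_{j=1}^r \lambda_j \le 1$ then only enters at the very end, when the individual factors are multiplied.

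First, for a single $x \in {]0,\beta]}$ and $\lambda \in [0,1]$, write $x + 2\lambda = (1-\lambda)\,x + \lambda\,(x+2)$. By the Bohr--Mollerup property, $\log \GammaFun$ is convex on ${]0,\infty[}$, so
\begin{equation*}
  \GammaFun(x+2\lambda) \le \GammaFun(x)^{1-\lambda}\,\GammaFun(x+2)^{\lambda}.
\end{equation*}
Applying the functional equation twice gives
\begin{equation*}
  \GammaFun(x+2) = x\,(x+1)\,\GammaFun(x) \le \beta(\beta+1)\,\GammaFun(x) \le (\beta+1)^2\,\GammaFun(x).
\end{equation*}
Since $\GammaFun(x) > 0$, substituting this into the convexity bound yields
\begin{equation*}
  \GammaFun(x+2\lambda) \le (\beta+1)^{2\lambda}\,\GammaFun(x).
\end{equation*}

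Second, I apply this bound to each pair $(x_j, \lambda_j)$ and take the product over $j \in \{1,\dots,r\}$. This gives
\begin{equation*}
  \prod_{j=1}^r \GammaFun(x_j+2\lambda_j) \le (\beta+1)^{2\sum_{j=1}^r \lambda_j} \prod_{j=1}^r \GammaFun(x_j).
\end{equation*}
Since $\beta + 1 \ge 1$ and $0 \le \sum_{j=1}^r \lambda_j \le 1$, the prefactor is at most $(\beta+1)^2$, which is the claimed estimate.

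I do not expect a real obstacle. The only nontrivial input is the log-convexity of $\GammaFun$, which is classical, for instance via Hölder's inequality applied to the Euler integral. A point to handle with care is that the bound $x(x+1) \le (\beta+1)^2$ uses $x \le \beta$ but not any lower bound on $x$, so arbitrarily small $x_j > 0$ are allowed.
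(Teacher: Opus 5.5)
Your proposal is correct and follows essentially the same route as the paper: log-convexity gives $\GammaFun(x+2\lambda) \le \GammaFun(x+2)^{\lambda}\GammaFun(x)^{1-\lambda}$, the bound $\GammaFun(x+2) = x(x+1)\GammaFun(x) \le (\beta+1)^2\GammaFun(x)$ gives the single-factor estimate, and taking the product with $\sum_{j=1}^r \lambda_j \le 1$ finishes the proof. Your remark that only the upper bound $x \le \beta$ is needed is accurate.
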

\begin{proof}
  Clearly $\GammaFun(x+2) = (x+1)x\GammaFun(x) \le (\beta+1)^2 \GammaFun(x)$ for all $x\in {]0,\beta]}$,
  and by logarithmic convexity of the Gamma function it follows that
  \begin{equation*}
    \GammaFun(x+2\lambda)
    =
    \GammaFun\bigl(\lambda(x+2) + (1-\lambda)x\bigr)
    \le
    \GammaFun(x+2)^\lambda \GammaFun(x)^{1-\lambda}
    \le
    (\beta+1)^{2\lambda} \GammaFun(x)
    \quad\quad
    \text{for all $x\in {]0,\beta]}$, $\lambda\in [0,1]$.}
  \end{equation*}
  Now let $x_1,\dots,x_r \in {]0,\beta]}$ and $\lambda_1,\dots,\lambda_r \in [0,1]$
  such that $\sum_{j=1}^r \lambda_j \le 1$. Then
  \begin{equation*}
    \prod_{j=1}^r \GammaFun\bigl(x_j+2\lambda_j\bigr)
    \le
    \prod_{j=1}^r (\beta+1)^{2\lambda_j} \Gamma(x_j)
    =
    (\beta+1)^{2\sum_{j=1}^r \lambda_j} \prod_{j=1}^r \Gamma(x_j)
    \le
    (\beta+1)^2 \prod_{j=1}^r \Gamma(x_j)
    .
  \end{equation*}
\end{proof}

\begin{proposition} \label{proposition:OmegaAnalytic}
  Let $d\in \NN_0$, $f \in \Dom_d$, and $G \in \SchroedNilGen_n$. Then
  \begin{equation}
    G f \in \Dom_{d+1}
    \quad\quad
    \text{and}
    \quad\quad
    \norm{G f}_\approx \le 3(2n)^r (d+1)\,\norm{f}_\approx
    .
  \end{equation}
\end{proposition}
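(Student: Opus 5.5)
The plan is to reduce to monomials and then compute $G$ explicitly on each monomial. Both $G$ and the coefficient functional $f\mapsto (f_k)_k$ are linear, and $\norm{\argument}_\approx$ is a weighted $\ell^1$-norm in the coefficients $f_k$. By the triangle inequality it therefore suffices to treat $f = Q^k\,\Omega$ with $k\in K_d$. For such $f$ I will show that $G\,Q^k\Omega$ is a combination of at most two monomials $Q^{m}\Omega$ with $m\in K_{d+1}$, and that
\begin{equation*}
  \norm{G\,Q^k\Omega}_\approx \le 3(2n)^r(d+1)\prod_{j=1}^r \GammaFun\bigl(\tfrac{2k_j+1}{(2n)^j}\bigr)^{\frac12}.
\end{equation*}
Throughout I write $x_j \coloneqq (2k_j+1)/(2n)^j$. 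Since $k\in K_d$ we have $x_j \le 2d+1 =: \beta$, so Lemma~\ref{lemma:GammaEstimate} applies with $(\beta+1)^2 = (2d+2)^2$. More precisely, I will use the intermediate inequality from its proof, $\prod_j \GammaFun(x_j+2\lambda_j) \le (\beta+1)^{2\sum_j\lambda_j}\prod_j\GammaFun(x_j)$. This sharper form is needed to get a bound that is linear in $d$.

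\emph{Case $G=Q^l\in\SchroedNilGen_{n;0}$.} Here $G\,Q^k\Omega = Q^{k+l}\Omega$. Since $l_j\le n$, we get $\sum_j l_j/(2n)^j \le n/(2n-1)\le 1$, so $k+l\in K_{d+1}$. Taking $\lambda_j = l_j/(2n)^j$, the norm ratio is at most $2d+2$, which is well below the claimed constant.

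\emph{Case $G=P_tQ^l\in\SchroedNilGen_{n;t}$.} Here $l_j=0$ for $j\le t$. Put $m=k+l$, so that $m_t=k_t$. Using the Leibniz rule together with \eqref{eq:PtOmega}, I get
\begin{equation*}
  P_t\,Q^{m}\Omega = -\I k_t\,Q^{m-e_t}\Omega + \tfrac{\I(2n)^t}{2}\,Q^{m+((2n)^t-1)e_t}\Omega,
\end{equation*}
where the first term is absent if $k_t=0$.

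For the second term, the added weight is $\frac{(2n)^t-1}{(2n)^t}+\sum_{j>t} \frac{l_j}{(2n)^j}$. The sum is at most $\frac{n}{(2n)^t(2n-1)}\le (2n)^{-t}$, so the total added weight is at most $1$. Hence the multiindex lies in $K_{d+1}$ and the lemma applies with $\sum\lambda_j\le1$. The resulting contribution is at most $\tfrac{(2n)^t}{2}(2d+2) = (2n)^t(d+1)$.

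The first term is the main obstacle. Its prefactor $k_t$ can be as large as $d(2n)^t$, and the $t$-th Gamma argument decreases to $y \coloneqq x_t - 2/(2n)^t \ge (2n)^{-t}$. I will absorb $k_t$ using $\GammaFun(y) = \GammaFun(y+1)/y$:
\begin{equation*}
  k_t\,\GammaFun(y)^{\frac12} = k_t\Bigl(\tfrac{(2n)^t}{2k_t-1}\Bigr)^{\frac12}\GammaFun(y+1)^{\frac12} \le \bigl(k_t(2n)^t\bigr)^{\frac12}\,\GammaFun(y+1)^{\frac12} \le (2n)^t d^{\frac12}\,\GammaFun(y+1)^{\frac12}.
\end{equation*}
Now $y+1 = x_t + 2\lambda_t$ with $\lambda_t = \tfrac12 - (2n)^{-t}$. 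For $j>t$ take $\lambda_j = l_j/(2n)^j$, which sum to at most $(2n)^{-t}$. So $\sum_j\lambda_j\le\tfrac12$, and the sharper Gamma inequality gives a factor $(2d+2)^{1/2}$ after taking the square root. The multiindex $k+l-e_t$ has weight at most $d+1$, so it lies in $K_{d+1}$.

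The first term therefore contributes at most $(2n)^t\sqrt{d}\sqrt{2d+2}\le 2(2n)^r(d+1)$. Adding the second term's $(2n)^r(d+1)$ gives the claimed $3(2n)^r(d+1)$, and membership $Gf\in\Dom_{d+1}$ follows from the index bookkeeping above.
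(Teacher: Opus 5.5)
Your proof is correct. Its skeleton matches the paper's: split $P_tQ^{k+\ell}\Omega$ into the derivative term and the term coming from \eqref{eq:PtOmega}, track the weights to land in $K_{d+1}$, and control the Gamma factors with $\beta = 2d+1$. The genuine difference is in how you estimate the derivative term $k_t\,Q^{k+\ell-e_t}\Omega$.

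The paper applies the functional equation twice, $\GammaFun(y+2) = (y+1)\,y\,\GammaFun(y)$ with $y = (2k_t-1)/(2n)^t$. Together with the elementary inequality $k_t^2 \le \bigl(2k_t-1+(2n)^t\bigr)(2k_t-1)$, this gives $k_t\,\GammaFun(y)^{1/2} \le (2n)^t\,\GammaFun(y+2)^{1/2}$. The derivative term is then bounded by exactly the same Gamma product as the second term. So a single application of Lemma~\ref{lemma:GammaEstimate} as stated, with $\sum_j\lambda_j\le 1$, handles both terms at once, and the factor $d+1$ enters only through $\beta$.

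You apply the functional equation only once. You absorb $k_t/\sqrt{y}$ using the a priori bound $k_t \le d\,(2n)^t$ from $k\in K_d$. To compensate, you need the refined exponent $(\beta+1)^{2\sum_j\lambda_j}$ from inside the Lemma's proof, with $\sum_j\lambda_j\le\frac12$, so that the two half-powers $\sqrt{d}\cdot\sqrt{2d+2}$ combine into something linear in $d$.

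Your route is more transparent about where the $d$-dependence of the derivative term comes from, and it gives the slightly better constant $(1+\sqrt2)(2n)^t(d+1)$. The paper's route needs only the Lemma exactly as stated and treats both terms uniformly.
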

\begin{proof}
  Let $d\in \NN_0$ and $f \in \Dom_d$; we can expand $f$ as $f = \sum_{k \in K_d} f_k \,Q^k\, \Omega$ with coefficients $f_k \in \CC$.

  First consider $G \in \SchroedNilGen_{n;t}$ for $t\in\{1,\dots,r\}$, so $G = P_t Q^\ell$ for some $\ell \in \NN_0^r$
  such that $\ell_j = 0$ for $j\in\{1,\dots,t\}$ and $\ell_j  \le n$ for $j \in \{t+1,\dots,r\}$. We write
  $e_t \in \NN_0^r$ for the multiindex $(e_t)_j \coloneqq \delta_{t,j}$, where $\delta$ is the Kronecker-$\delta$. Then
  \begin{align*}
    Gf
    &=
    \sum_{k \in K_d} f_k \, P_t \,Q^{k+\ell}\, \Omega
    \\
    &=
    -\I \sum_{\substack{k \in K_d \\ k_t > 0}} k_t\,f_k \,Q^{k+\ell-e_t} \, \Omega
    +
    \sum_{k \in K_d} f_k \,Q^{k+\ell} P_t \, \Omega
    \\
    &=
    -\I \sum_{\substack{k \in K_d \\ k_t > 0}} k_t\,f_k \,Q^{k+\ell-e_t} \, \Omega
    +
    \frac{\I (2n)^t}{2} \sum_{k \in K_d} f_k \,Q^{k+\ell+((2n)^t-1) e_t} \, \Omega
  \end{align*}
  by \eqref{eq:CCR} and \eqref{eq:PtOmega}. We write
  \begin{equation*}
    \lambda_j \coloneqq \frac{\ell_j + ((2n)^t-1)\delta_{t,j}}{(2n)^j} \in {[0,\infty[}
    \quad\quad\text{for }j\in\{1,\dots,r\}
    ,
  \end{equation*}
  then $\sum_{j=1}^r \lambda_j = \frac{(2n)^t-1}{(2n)^t} + \sum_{j=t+1}^r \frac{\ell_j}{(2n)^j} \le 1$ because
  \begin{equation*}
    \sum_{j=t+1}^r \frac{\ell_j}{(2n)^j}
    \le
    \frac{1}{2} \sum_{j=t+1}^r \frac{1}{(2n)^{j-1}}
    =
    \frac{1}{2(2n)^t} \sum_{j'=0}^{r-t-1} \frac{1}{(2n)^{j'}}
    \le
    \frac{1}{(2n)^t}
    .
  \end{equation*}
  It follows that $k+\ell+((2n)^t-1) e_t \in K_{d+1}$ for all $k\in K_d$
  (and then clearly also $k+\ell - e_t \in K_{d+1}$ for all $k\in K_d$ with $k_t>0$), because
  \begin{equation*}
    \sum_{j=1}^r \frac{k_j+\ell_j+((2n)^t-1) \delta_{t,j} }{ (2n)^j }
    =
    \sum_{j=1}^r \frac{k_j}{ (2n)^j } + \sum_{j=1}^r \lambda_j
    \le
    d + 1
    .
  \end{equation*}
  This shows that $Gf \in \Dom_{d+1}$. Moreover:
  \begin{align*}
    \norm{G f}_\approx
    &\le
    \sum_{\substack{k \in K_d \\ k_t > 0}} k_t\,\abs{f_k} \,\norm[\big]{ Q^{k+\ell-e_t} \, \Omega }_\approx
    +
    \frac{(2n)^t}{2} \sum_{k \in K_d} \abs{f_k} \, \norm[\big]{ Q^{k+\ell+((2n)^t-1) e_t} \, \Omega }_\approx
    \\
    &=
    \sum_{\substack{k \in K_d \\ k_t > 0}} k_t\,\abs{f_k}
    \,\prod_{j=1}^r \GammaFun\biggl( \frac{2k_j+2\ell_j-2\delta_{t,j}+1}{(2n)^j} \biggr)^{\!\frac12}
    +
    \frac{(2n)^t}{2} \sum_{k \in K_d} \abs{f_k} \,
    \prod_{j=1}^r \GammaFun\biggl( \frac{2k_j+1}{(2n)^j} + 2\lambda_j\biggr)^{\!\frac12}
    .
  \end{align*}
  Now note that $k_t^2 \le \bigl(2 k_t - 1 + (2n)^t\bigr)(2k_t -1)$ for all $k_t \in \NN$,
  because
  \begin{equation*}
    \bigl(2 k_t - 1 + (2n)^t\bigr)(2k_t -1) - k_t^2 = 3k_t^2 - 4 k_t + 1 + (2n)^t(2 k_t -1) = 3k_t^2 -1 + \bigl((2n)^t-2\bigr)(2 k_t-1) \ge 0
    .
  \end{equation*}
  This estimate and the identities $\ell_t=0$ and $x\,\Gamma(x) = \Gamma(x+1)$ for $x\in {]0,\infty[}$
  allow us to derive the following inequality, which we then use for estimating the first sum in the above bound
  for $\norm{G f}_\approx$:
  \begin{align*}
    k_t \GammaFun\biggl( \frac{2k_t+2\ell_t-2\delta_{t,t}+1}{(2n)^t} \biggr)^{\!\frac12}
    &\le
    (2n)^t
    \biggl( \frac{2 k_t - 1 + (2n)^t}{(2n)^t} \biggr)^{\!\frac12}
    \biggl( \frac{2k_t -1}{(2n)^t} \biggr)^{\!\frac12}
    \GammaFun\biggl( \frac{2k_t+2\ell_t-1}{(2n)^t} \biggr)^{\!\frac12}
    \\
    &=
    (2n)^t \biggl( \frac{2 k_t + 2\ell_t - 1}{(2n)^t} + 1\biggr)^{\!\frac12}
    \biggl( \frac{2 k_t + 2\ell_t - 1}{(2n)^t}\biggr)^{\!\frac12}
    \GammaFun\biggl( \frac{2k_t+2\ell_t-1}{(2n)^t} \biggr)^{\!\frac12}
    \\
    &=
    (2n)^t
    \GammaFun\biggl( \frac{2k_t+2\ell_t + 2((2n)^t-1)\delta_{t,t} + 1}{(2n)^t} \biggr)^{\!\frac12}
  \end{align*}
  and consequently
  \begin{align*}
    \sum_{\substack{k \in K_d \\ k_t > 0}} k_t\,\abs{f_k}
    \,\prod_{j=1}^r \GammaFun\biggl( \frac{2k_j+2\ell_j-2\delta_{t,j}+1}{(2n)^j} \biggr)^{\!\frac12}
    &\le
    (2n)^t \sum_{\substack{k \in K_d \\ k_t > 0}} \abs{f_k}
    \,\prod_{j=1}^r \GammaFun\biggl( \frac{2k_j+2\ell_j+ 2((2n)^t-1)\delta_{t,j}+1}{(2n)^j} \biggr)^{\!\frac12}
    \\
    &=
    (2n)^t \sum_{\substack{k \in K_d \\ k_t > 0}} \abs{f_k}
    \,\prod_{j=1}^r \GammaFun\biggl( \frac{2k_j+1}{(2n)^j} + 2\lambda_j \biggr)^{\!\frac12}
    .
  \end{align*}
  It follows that
  \begin{equation*}
    \norm{G f}_\approx \le \frac{3(2n)^t }{2} \sum_{k \in K_d} \abs{f_k} \prod_{j=1}^r \GammaFun\biggl( \frac{2k_j+1}{(2n)^j} + 2\lambda_j\biggr)^{\frac12}
    .
  \end{equation*}
  As $\sum_{j=1}^r \lambda_j \le 1$ and $\frac{2k_j+1}{(2n)^j} \le 2d+1$ for all $j\in \{1,\dots,r\}$,
  Lemma~\ref{lemma:GammaEstimate} applies with $\beta \coloneqq 2d+1$, so
  \begin{align*}
    \norm{G f}_\approx
    &\le
    \frac{3(2n)^t}{2} \sum_{k \in K_d} \abs{f_k} \,\bigl(2d+2\bigr)
    \prod_{j=1}^r \GammaFun\biggl( \frac{2k_j+1}{(2n)^j}\biggr)^{\!\frac12}
    =
    3(2n)^t(d+1) \norm{f}_\approx
    .
  \end{align*}
  It remains to treat the case $G \in \SchroedNilGen_{n;0}$, i.e., $G = Q^\ell$ for some $\ell \in \NN_0^r$
  such that $\ell_j \le n$ for all $j\in\{1,\dots,r\}$. In this case
  $Gf = \sum_{k \in K_d} f_k \, Q^{k+\ell}\, \Omega$.
  We write $\lambda_j \coloneqq \frac{\ell_j}{(2n)^j} \in {[0,\infty[}$ for $j\in\{1,\dots,r\}$. Then again
  $\sum_{j=1}^r \lambda_j \le \frac{1}{2}\sum_{j=1}^r \frac{1}{(2n)^{j-1}} \le 1$, and $k+\ell \in K_{d+1}$ because
  \begin{equation*}
    \sum_{j=1}^r \frac{k_j+\ell_j}{(2n)^j} = \sum_{j=1}^r \frac{k_j}{(2n)^j} + \sum_{j=1}^r \lambda_j \le d+1
    .
  \end{equation*}
  This shows that $Gf \in \Dom_{d+1}$, and by similar estimates as before,
  in particular by application of Lemma~\ref{lemma:GammaEstimate} with $\beta \coloneqq 2d+1$, it also follows that
  \begin{align*}
    \norm{G f}_\approx
    &\le
    \sum_{k \in K_d} \abs{f_k} \, \norm[\big]{ Q^{k+\ell} \,\Omega }_\approx
    \\
    &=
    \sum_{k \in K_d} \abs{f_k} \, \prod_{j=1}^r \GammaFun\biggl( \frac{2k_j+1}{(2n)^j} + 2\lambda_j \biggr)^{\!\frac12}
    \\
    &\le
    \sum_{k \in K_d} \abs{f_k} \, \bigl( 2d+2 \bigr) \prod_{j=1}^r \GammaFun\biggl( \frac{2k_j+1}{(2n)^j} \biggr)^{\!\frac12}
    \\
    &=
    2(d+1) \norm{f}_\approx
    .
  \end{align*}
  In all cases, i.e., for all $G \in \mathcal{B}_{n;t}$ with $t\in \{0,\dots,r\}$,
  the estimate $\norm{G f}_\approx \le 3(2n)^r(d+1) \norm{f}_\approx$ holds.
\end{proof}

\begin{corollary} \label{corollary:OmegaAnalytic}
  There is $\rho \in {[0,\infty[}$ such that $\norm[\big]{G_\ell\cdots G_1\,\Omega} \le \rho^\ell\,\ell!$
  for all $\ell\in \NN$ and $G_1,\dots,G_\ell \in \SchroedNilGen_n$.
\end{corollary}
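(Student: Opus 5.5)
We iterate Proposition~\ref{proposition:OmegaAnalytic} starting from $\Omega \in \Dom_0$ and then pass from $\norm{\argument}_\approx$ to the Hilbert norm via \eqref{eq:normestimate}. Note that $\Omega = Q^0\,\Omega$, so $\Omega \in \Dom_0$ (indeed $0 \in K_0$). Its approximate norm is $\norm{\Omega}_\approx = \prod_{j=1}^r \GammaFun\bigl(1/(2n)^j\bigr)^{1/2}$, a finite constant which we denote by $C$.

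Now fix $\ell \in \NN$ and $G_1,\dots,G_\ell \in \SchroedNilGen_n$. We show by induction on $i \in \{0,\dots,\ell\}$ that
\begin{equation*}
  G_i \cdots G_1\,\Omega \in \Dom_i
  \quad\text{and}\quad
  \norm{G_i\cdots G_1\,\Omega}_\approx \le \bigl(3(2n)^r\bigr)^i\, i!\, C .
\end{equation*}
The case $i=0$ is the observation above. For the inductive step, apply Proposition~\ref{proposition:OmegaAnalytic} with $d \coloneqq i$ and $f \coloneqq G_i\cdots G_1\,\Omega \in \Dom_i$. It gives $G_{i+1} f \in \Dom_{i+1}$ together with $\norm{G_{i+1} f}_\approx \le 3(2n)^r (i+1)\norm{f}_\approx$. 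Inserting the induction hypothesis turns the factor $(i+1)\cdot i!$ into $(i+1)!$. The point is that the growth factor at step $i$ is linear in the degree $d=i$, so the product of these factors is exactly a factorial.

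By \eqref{eq:normestimate} we then get
\begin{equation*}
  \norm{G_\ell\cdots G_1\,\Omega} \le \bigl(3(2n)^r\bigr)^\ell\,\ell!\,C .
\end{equation*}
It remains to absorb $C$ into the geometric factor. Set $\rho \coloneqq 3(2n)^r \max\{1,C\}$. Since $\ell \ge 1$ and $\max\{1,C\} \ge 1$, we have $C \le \max\{1,C\}^\ell$, and the claimed bound $\norm{G_\ell\cdots G_1\,\Omega} \le \rho^\ell\,\ell!$ follows.

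There is no real obstacle here; all the analytic work is in Proposition~\ref{proposition:OmegaAnalytic}. The only care needed is the bookkeeping: the degree index $d$ must increase by exactly one per application, so that the $(d+1)$-factors multiply to $\ell!$ rather than anything larger.
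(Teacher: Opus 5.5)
Your proof is correct and follows the paper's argument essentially verbatim. Like the paper, you start from $\Omega \in \Dom_0$, iterate Proposition~\ref{proposition:OmegaAnalytic} to get the $\norm{\argument}_\approx$-bound $\bigl(3(2n)^r\bigr)^\ell\,\ell!\,\norm{\Omega}_\approx$, pass to the Hilbert norm via \eqref{eq:normestimate}, and absorb the constant. The paper takes $\rho = 3(2n)^r\bigl(\norm{\Omega}_\approx + 1\bigr)$ where you take $\rho = 3(2n)^r\max\{1,\norm{\Omega}_\approx\}$, which makes no difference.
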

\begin{proof}
  Induction over $\ell$, starting with $\Omega \in \Dom_0$ and $\norm{\Omega}_\approx \le \norm{\Omega}_\approx$,
  and applying Proposition~\ref{proposition:OmegaAnalytic} in the induction step, shows that
  \begin{equation*}
    G_\ell\cdots G_1\,\Omega \in \Dom_{\ell}
    \quad
    \text{and}
    \quad
    \norm[\big]{G_\ell\cdots G_1\,\Omega}_\approx \le \bigl(3(2n)^r\bigr)^\ell\ell!\,\norm{\Omega}_\approx
    \quad\text{for all $\ell \in \NN_0$ and $G_1,\dots,G_\ell \in \SchroedNilGen_n$.}
  \end{equation*}
  Set $\rho \coloneqq 3(2n)^r \bigl(\norm{\Omega}_\approx+1\bigr)$, then $\bigl(3(2n)^r\bigr)^\ell \norm{\Omega}_\approx \le \rho^\ell$
  for all $\ell \in \NN$.
  By \eqref{eq:normestimate}, the norm $\norm{\argument}_\approx$ can be used to estimate the Hilbert norm $\norm{\argument}$
  on $\Dom_\infty$ and it follows that
  \begin{equation*}
    \norm[\big]{G_\ell\cdots G_1\,\Omega}
    \le
    \norm[\big]{G_\ell\cdots G_1\,\Omega}_\approx
    \le
    \bigl(3(2n)^r\bigr)^\ell\ell!\,\norm{\Omega}_\approx
    \le
    \rho^\ell\,\ell!
    \quad\quad\!\! \text{for all $\ell \in \NN$ and $G_1,\dots,G_\ell \in \SchroedNilGen_n$}.\!
  \end{equation*}
\end{proof}

\begin{proof}[of Theorem~\ref{theorem:integrable}]
  From part~\ref{item:Schroedinger:irredintegrable} of Proposition~\ref{proposition:Schroedinger}
  we already know that the Schrödinger $^*$\=/representation
  $\bigl(\Schwartz(\RR^r),\pi_\cat\bigr)$ of $\Weyl{r}$ is selfadjoint and irreducible
  (these properties depend only on the $O^*$\=/algebra $\pi_\cat(\Weyl{r})$, not on whether or how we view $\Weyl{r}$
  as an enveloping $^*$\=/algebra of some Lie $^*$\=/algebra).
  It only remains to show that $\domAna(\pi_\cat,\iota)$ is $\norm{\argument}$\=/dense in $\Schwartz(\RR^r)$.

  So let $B$ be a basis of $\pi_\cat(\iota(\lie g))$.
  As $\pi_\cat(\iota(\lie g))$ is contained in the linear span of $\SchroedNilGen_n$ by definition of $n$,
  every element $H\in B$ can be expanded as
  \begin{equation*}
    H = \sum_{G \in \SchroedNilGen_n} \xi_{H,G}\,G
    \quad\quad\text{with suitable coefficients $\xi_{H,G} \in \CC$.}
  \end{equation*}
  As both $B$ and $\SchroedNilGen_n$ are finite,
  the maximum $\mu \coloneqq \max \set[\big]{\abs{\xi_{H,G}}}{H\in B \text{ and } G \in\SchroedNilGen_n}$ exists.
  For every $\ell \in \NN$ and all $H_1,\dots,H_\ell \in B$ the estimate
  \begin{equation*}
    \norm[\big]{ H_\ell \cdots H_1\,\Omega }
    \le
    \sum_{G_1,\dots,G_\ell \in \SchroedNilGen_n}
    \abs[\big]{\xi_{H_\ell,G_\ell}} \cdots \abs[\big]{\xi_{H_1,G_1}}
    \,
    \norm[\big]{ G_\ell \cdots G_1\,\Omega }
    \le
    \bigl(\mu \rho \abs{\SchroedNilGen_n} \bigr)^\ell\,\ell!
  \end{equation*}
  holds, with $\rho \in {[0,\infty[}$ from the previous Corollary~\ref{corollary:OmegaAnalytic}
  and with $\abs{\SchroedNilGen_n}$ the number of elements in $\SchroedNilGen_n$.
  This shows that $\Omega$ is an analytic vector for the $^*$\=/representation
  $\bigl( \Schwartz(\RR^r), \pi_\cat \bigl)$ of the enveloping $^*$\=/algebra $\bigl(\Weyl{r}, \iota\bigr)$ of $\lie g$,
  i.e., $\Omega \in \domAna(\pi_\cat,\iota)$. By $\pi_\cat$-invariance of $\domAna(\pi_\cat,\iota)$,
  it follows that $\set{\pi_\cat(a)\,\Omega}{a\in \Weyl{r}} \subseteq \domAna(\pi_\cat,\iota)$.

  However, $\Omega$ is also an analytic vector with respect to the enveloping $^*$\=/algebra
  $\bigl(\Weyl{r}, \iotaHeisenberg\bigr)$ of the Heisenberg Lie $^*$\=/algebra $\Heisenberg{r}$ as in \eqref{eq:iotaCan},
  i.e., $\Omega \in \domAna(\pi_\cat,\iotaHeisenberg)$:
  Namely, let $c,d_1,\dots,d_r, e_1,\dots,e_r$ be the standard basis of $\Heisenberg{r}$,
  then $\pi_\cat(\iotaHeisenberg(c)) = \id_{\Schwartz(\RR^r)} = Q^0 \in \SchroedNilGen_1$ and
  $\pi_\cat(\iotaHeisenberg(d_j)) = P_j \in \SchroedNilGen_1$, $\pi_\cat(\iotaHeisenberg(e_j)) = Q_j \in \SchroedNilGen_1$.
  As $\SchroedNilGen_1 \subseteq \SchroedNilGen_n$, Corollary~\ref{corollary:OmegaAnalytic} also
  shows that $\Omega \in \domAna(\pi_\cat,\iotaHeisenberg)$.
  Part~\ref{item:Schroedinger:denseOrbit} of Proposition~\ref{proposition:Schroedinger} therefore shows that
  $\set{\pi_\cat(a)\,\Omega}{a\in \Weyl{r}}$ is $\norm{\argument}$\=/dense in $\Schwartz(\RR^r)$.
  Consequently, $\domAna(\pi_\cat,\iota)$ is $\norm{\argument}$\=/dense in $\Schwartz(\RR^r)$.
\end{proof}

\section{Central enveloping \texorpdfstring{$^*$\=/algebras}{*-algebras} of nilpotent Lie \texorpdfstring{$^*$\=/algebras}{*-algebras}}
\label{sec:centralEnv}
All tensor products $\otimes$ are tensor products of complex vector spaces.
For the \emph{center} of a $^*$\=/algebra $\A$ we write
$\Zentrum(\A)\coloneqq \set[\big]{a\in \A}{aa'=a'a~\text{for all}~a'\in\A}$.
Clearly, $\Zentrum(\A)$ is a commutative $^*$\=/subalgebra of $\A$.
Now consider two $^*$\=/algebras $\A$ and $\B$. Their tensor product $\A \otimes \B$, with multiplication and $^*$\=/involution
defined as $(a \otimes b)(a' \otimes b') \coloneqq aa' \otimes bb'$ and $(a\otimes b)^* \coloneqq a^* \otimes b^*$
for all $a,a' \in \A$ and $b,b'\in \B$, is again a $^*$\=/algebra.
If $\C$ is a third $^*$\=/algebra and $\Phi \colon \A \to \C$ and $\Psi \colon \B \to \C$ two $^*$\=/algebra morphisms
such that $\kom{\Phi(a)}{\Psi(b)} = 0$ for all $a\in \A$, $b\in \B$, then the linear map
$\A \otimes \B \to \C$,
\begin{equation}
  a \otimes b \mapsto \Phi(a)\,\Psi(b)
\end{equation}
is again a $^*$\=/algebra morphism, which is easy to check.
Note also that the center of the tensor product $^*$\=/algebra is $\Zentrum(\A \otimes \B) = \Zentrum(\A) \otimes \Zentrum(\B)$
(this is well known: simply expand any central element of $\A \otimes \B$ as
$\sum_{i=1}^k a_i \otimes b_i$ with $a_1,\dots, a_k \in \A$ linearly independent and $b_1,\dots, b_k \in \B$,
and consider commutators with $1 \otimes b'$ for arbitrary $b' \in \B$ to show that $b_1,\dots,b_k \in \Zentrum(\B)$;
in a second step, choose another expansion of the form
$\sum_{i=1}^k a_i \otimes b_i$ with $b_1,\dots, b_k \in \Zentrum(\B)$ linearly independent and $a_1,\dots, a_k \in \A$,
and consider commutators with $a' \otimes 1$ for arbitrary $a'\in \A$ to show that $a_1,\dots,a_k \in \Zentrum(\A)$).

We will repeatedly make use of an elementary observation (Lemma~\ref{lemma:tensorWeyl})
regarding tensor products with the Weyl $^*$\=/algebra $\Weyl{1}$.
For simplicity, we write $p \coloneqq p_1$ and $q \coloneqq q_1$ for the generators of $\Weyl{1}$.
It is well known that $\set{p^k q^\ell}{k,\ell \in \NN_0}$ is a basis of $\Weyl{1}$.
Therefore, if $\A$ is a $^*$\=/algebra and $x\in \A \otimes \Weyl{1}$, then
$x$ can be expressed as $x = \sum_{k,\ell = 0}^\infty x_{k,\ell} \otimes p^k q^\ell$ with uniquely determined coefficients
$x_{k,\ell} \in \A$, of which only finitely many are non-zero.

\begin{lemma} \label{lemma:tensorWeyl}
  Let $\A$ be a $^*$\=/algebra and $\mathcal S$ a linear subspace of $\A \otimes \Weyl{1}$ such that
  $(1\otimes b)s,s(1\otimes b) \in \mathcal{S}$ for all $b\in \Weyl{1}$, $s \in \mathcal S$.
  Suppose $\hat s \in \mathcal S$. Then the coefficients $\hat s_{k,\ell} \in \A$ of $\hat s$ fulfill
  $\hat s_{k,\ell} \otimes 1 \in \mathcal S$ for all $k,\ell \in \NN_0$.
\end{lemma}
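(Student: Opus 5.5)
The idea is to use commutators with $1\otimes p$ and $1\otimes q$ as lowering operators that reduce the $p$- and $q$-degrees, and then isolate the individual coefficients. Since $\mathcal S$ is stable under left and right multiplication by $1\otimes b$, it is stable under the maps
\begin{equation*}
  D_p(s) \coloneqq \kom{1\otimes p}{s}
  \quad\quad\text{and}\quad\quad
  D_q(s) \coloneqq \kom{1\otimes q}{s}.
\end{equation*}
The canonical commutation relations $\kom{p}{q} = -\I$ give
\begin{equation*}
  \kom{p}{q^\ell} = -\I\,\ell\, q^{\ell-1}
  \quad\quad\text{and}\quad\quad
  \kom{q}{p^k} = \I\,k\,p^{k-1}.
\end{equation*}
Moreover, $p$ commutes with $p^k$ and $q$ commutes with $q^\ell$. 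Hence, for $x = \sum_{k,\ell} x_{k,\ell}\otimes p^kq^\ell$,
\begin{align*}
  D_p(x) &= -\I \sum_{k,\ell} \ell\, x_{k,\ell} \otimes p^k q^{\ell-1},\\
  D_q(x) &= \I \sum_{k,\ell} k\, x_{k,\ell} \otimes p^{k-1} q^{\ell}.
\end{align*}
Since $D_q$ acts on $p^kq^\ell$ by $\kom{q}{p^k}\,q^\ell$, the ordering of the basis $p^kq^\ell$ is preserved. In particular, $D_p$ and $D_q$ act on the coefficients exactly as differentiation in $q$ and in $p$, respectively, up to nonzero scalars.

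The argument is then an induction on degree. Let $N$ be the maximal $k$ and $M$ the maximal $\ell$ occurring among the nonzero coefficients of $\hat s$. Applying $D_q^{N}D_p^{M}$ to $\hat s$ kills every term except the one with $(k,\ell) = (N,M)$. The result is $\I^{N}(-\I)^{M} N!\,M!\,\hat s_{N,M}\otimes 1 \in \mathcal S$, so $\hat s_{N,M}\otimes 1\in\mathcal S$ after dividing by the nonzero scalar.

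To reach the other coefficients, I will organise the induction as a downward induction in the lexicographic (or total-degree) order. For a given $(k_0,\ell_0)$, applying $D_q^{k_0}D_p^{\ell_0}$ to $\hat s$ gives an element of $\mathcal S$ of the form
\begin{equation*}
  c\,\hat s_{k_0,\ell_0}\otimes 1 + \sum_{(k,\ell)\neq(0,0)} y_{k,\ell}\otimes p^kq^\ell,
\end{equation*}
with $c \neq 0$. Each $y_{k,\ell}$ is a nonzero multiple of $\hat s_{k+k_0,\,\ell+\ell_0}$, whose index is strictly larger in both-coordinate order. By the induction hypothesis all those larger coefficients $a$ satisfy $a\otimes 1\in\mathcal S$. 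Then $(a\otimes 1)(1\otimes p^kq^\ell) = a\otimes p^kq^\ell \in \mathcal S$ by the right-multiplication stability. Subtracting these terms leaves $c\,\hat s_{k_0,\ell_0}\otimes 1\in\mathcal S$.

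To make this airtight, the induction should be over the finite set of index pairs, ordered so that all $(k,\ell)$ with $k\ge k_0$ and $\ell\ge \ell_0$, other than $(k_0,\ell_0)$ itself, are handled before $(k_0,\ell_0)$. For instance, proceed in decreasing order of $k+\ell$. Pairs whose coefficient is zero are trivial.

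The only real obstacle is bookkeeping. I must check that the commutator formulas hold in the stated order $p^kq^\ell$, which they do because $D_p$ only touches the $q$-factor and $D_q$ only touches the $p$-factor. I must also check that every scalar produced is nonzero, which holds since the scalars are products of factorials and powers of $\pm\I$.
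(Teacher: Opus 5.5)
Your proof is correct and follows essentially the same route as the paper: both use the lowering operators $D_q^{k}D_p^{\ell}$ (the paper's $\ad_{1\otimes q}^k\ad_{1\otimes p}^\ell$ up to a scalar) together with a downward induction on $k+\ell$, and both use right multiplication by $1\otimes p^kq^\ell$ to remove the higher-degree coefficients already known to satisfy $a\otimes 1\in\mathcal S$. The only cosmetic difference is the order of steps: the paper first subtracts the higher-degree terms from $\hat s$ and then applies the lowering operator, whereas you apply $D_q^{k_0}D_p^{\ell_0}$ first and then subtract the surviving higher-degree terms.
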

\begin{proof}
  We repeat the standard proof for convenience of the reader:
  For any $b \in \Weyl{1}$, we define the inner derivation $\ad_{1\otimes b} \colon \A \otimes \Weyl{1} \to \A \otimes \Weyl{1}$,
  \begin{equation*}
    x \mapsto \ad_{1\otimes b} x \coloneqq \kom{1 \otimes b}{x}
    .
  \end{equation*}
  Then $\ad_{1\otimes b} s \in \mathcal S$ for all $s\in \mathcal S$ by assumption.
  We also define certain products of such derivations, namely
  $\ad^{k,\ell} \coloneqq \frac{(-\I)^k \I^\ell }{k!\,\ell!} \ad_{1 \otimes q}^k \ad_{1 \otimes p}^\ell$
  for $k,\ell \in \NN_0$; again $\ad^{k,\ell} s \in \mathcal S$ for all $s\in \mathcal S$. Let $a \in \A$. From
  \begin{align*}
    && -\I \ad_{1\otimes q} ( a\otimes p) &=  a\otimes 1, & -\I \ad_{1\otimes q} ( a\otimes q) &= 0,&&\\
    && \I \ad_{1\otimes p} ( a\otimes p) &= 0, & \I \ad_{1\otimes p} ( a\otimes q) &=  a \otimes 1 &&
  \end{align*}
  it follows that $\ad^{k,\ell} ( a \otimes p^m q^n) = 0$ for all $m,n\in \NN_0$ with $m<k$ or $n < \ell$,
  and $\ad^{k,\ell} ( a \otimes p^k q^\ell) = a \otimes 1$.

  Now consider $\hat s \in \mathcal S$.
  If $t\in \NN_0$ is sufficiently large, then $\hat s_{k,\ell} = 0$ for all $k,\ell \in \NN_0$ with $k+\ell > t$,
  and in particular $\hat s_{k,\ell} \otimes 1 \in \mathcal S$.
  Let $t\in \NN_0$ and assume it has already been shown that
  $\hat s_{k,\ell} \otimes 1 \in \mathcal S$ for all $k,\ell \in \NN_0$ with $k+\ell > t$. We show that then also
  $\hat s_{k,\ell} \otimes 1 \in \mathcal S$ for all $k,\ell \in \NN_0$ with $k+\ell = t$: namely,
  \begin{equation*}
    \sum_{\substack{m,n \in \NN_0 \\ m+n \le t}} \hat s_{m,n} \otimes p^m q^n
    =
    \hat s - \sum_{\substack{m,n \in \NN_0 \\ m+n > t}} (\hat s_{m,n} \otimes 1)(1 \otimes p^m q^n) \in \mathcal S
    ,
  \end{equation*}
  and consequently,
  if $k,\ell \in \NN_0$ fulfill $k+\ell = t$, then application of $\ad^{k,\ell}$ shows that $\hat s_{k,\ell} \otimes 1 \in \mathcal S$.
  By induction, $\hat s_{k,\ell} \otimes 1 \in \mathcal S$ for all $k,\ell \in \NN_0$.
\end{proof}

\begin{proposition} \label{proposition:WeylTotallySimple}
  Let $\A$ be a $^*$\=/algebra and $r\in \NN_0$.
  \begin{enumerate}
    \item \label{item:WeylTotallySimple:ideals}
      If $I$ is an ideal of $\A \otimes \Weyl{r}$, then there is an ideal $I_\A$ of $\A$ such that $I = I_\A \otimes \Weyl{r}$.
    \item \label{item:WeylTotallySimple:fingenSubalgebra}
      If $\F$ is a finitely generated $^*$\=/subalgebra of $\A \otimes \Weyl{r}$ such that
      $1 \otimes \Weyl{r} \subseteq \F$, then
      \begin{equation}
        \A_\fin \coloneqq \set{a\in \A}{a \otimes 1 \in \F}
      \end{equation}
      is a finitely generated $^*$\=/subalgebra of $\A$ and $\F = \A_\fin \otimes \Weyl{r}$.
  \end{enumerate}
\end{proposition}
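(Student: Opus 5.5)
Both parts go by induction on $r$, using $\Weyl{r} \cong \Weyl{r-1} \otimes \Weyl{1}$ as $^*$\=/algebras (the generators $p_r,q_r$ commute with $p_j,q_j$ for $j<r$). Hence $\A \otimes \Weyl{r} \cong (\A \otimes \Weyl{r-1}) \otimes \Weyl{1}$, and it suffices to treat $r=1$ with $\A$ replaced by $\A\otimes\Weyl{r-1}$. The case $r=0$ is trivial in both parts.

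\textbf{Part~\ref{item:WeylTotallySimple:ideals}.} Let $r=1$ and let $I$ be an ideal of $\A\otimes\Weyl{1}$. Set $I_\A \coloneqq \set{a\in\A}{a\otimes 1\in I}$. This is an ideal of $\A$, since $(a'\otimes1)(a\otimes1)(a''\otimes1)\in I$. Clearly $I_\A\otimes\Weyl{1}\subseteq I$, because $(a\otimes 1)(1\otimes b)\in I$. Conversely, $\mathcal S\coloneqq I$ satisfies the hypothesis of Lemma~\ref{lemma:tensorWeyl}. So every $\hat s\in I$ has all coefficients $\hat s_{k,\ell}\in I_\A$, and therefore $\hat s=\sum \hat s_{k,\ell}\otimes p^kq^\ell\in I_\A\otimes\Weyl{1}$.

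For the induction step, apply the $r=1$ case to get $I=I'\otimes\Weyl{1}$ with $I'$ an ideal of $\A\otimes\Weyl{r-1}$. By induction $I'=I_\A\otimes\Weyl{r-1}$, hence $I=I_\A\otimes\Weyl{r}$.

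\textbf{Part~\ref{item:WeylTotallySimple:fingenSubalgebra}.} Again let $r=1$. The set $\A_\fin$ is a $^*$\=/subalgebra of $\A$, since $\F$ is one and $a\mapsto a\otimes1$ is a $^*$\=/algebra morphism. The inclusion $\A_\fin\otimes\Weyl{1}\subseteq\F$ follows from $1\otimes\Weyl{1}\subseteq\F$ and closure under products. For the converse, apply Lemma~\ref{lemma:tensorWeyl} with $\mathcal S\coloneqq\F$, which is stable under multiplication by $1\otimes b$ from both sides. This gives that every coefficient $\hat s_{k,\ell}$ of $\hat s\in\F$ lies in $\A_\fin$, so $\F=\A_\fin\otimes\Weyl{1}$.

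The one point needing care is finite generation of $\A_\fin$. Take finitely many generators $s^{(1)},\dots,s^{(N)}$ of $\F$ and let $\C$ be the $^*$\=/subalgebra of $\A$ generated by all of their (finitely many) coefficients $s^{(i)}_{k,\ell}$ and the adjoints of these coefficients. Then $\C\subseteq\A_\fin$ by what was just shown. Each $s^{(i)}$ lies in $\C\otimes\Weyl{1}$, which is a $^*$\=/subalgebra, so $\F\subseteq\C\otimes\Weyl{1}$. Hence $\A_\fin\otimes 1\subseteq\C\otimes\Weyl{1}$. Comparing coefficients in the basis $p^kq^\ell$ yields $\A_\fin\subseteq\C$, so $\A_\fin=\C$ is finitely generated.

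For the induction step, view $\F\subseteq(\A\otimes\Weyl{r-1})\otimes\Weyl{1}$. It contains $1\otimes\Weyl{1}$, so the $r=1$ case gives $\F=\F'\otimes\Weyl{1}$, where $\F'\coloneqq\set{x\in\A\otimes\Weyl{r-1}}{x\otimes1\in\F}$ is finitely generated. Moreover $1\otimes\Weyl{r-1}\subseteq\F'$ because $1\otimes\Weyl{r}\subseteq\F$. By induction, $\F'=\A'_\fin\otimes\Weyl{r-1}$ with $\A'_\fin=\set{a}{a\otimes1\in\F'}$ finitely generated. Since $a\otimes 1\otimes1\in\F$ if and only if $a\otimes1\in\F'$, we have $\A'_\fin=\A_\fin$, and therefore $\F=\A_\fin\otimes\Weyl{r}$.
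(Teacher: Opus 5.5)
Your proof is correct and follows essentially the same route as the paper: reduce to $r=1$ via $\Weyl{r-1}\otimes\Weyl{1}\cong\Weyl{r}$, use Lemma~\ref{lemma:tensorWeyl} in both parts, and get finite generation from the finitely many coefficients of a finite generating set of $\F$ together with uniqueness of coefficients in the basis $p^kq^\ell$. The only differences are cosmetic. You first show $\F=\A_\fin\otimes\Weyl{1}$ and then identify $\A_\fin$ with $\C$, whereas the paper first shows $\F=\B\otimes\Weyl{r}$ and then $\B=\A_\fin$. You also spell out the induction step, which the paper leaves implicit.
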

\begin{proof}
  In both parts it is sufficient to consider the case $r=1$ only,
  the general case then follows by induction over $r \in \NN$ because $\Weyl{r} \otimes \Weyl{1} \cong \Weyl{r+1}$.

  Part~\ref{item:WeylTotallySimple:ideals}: Let $I$ be an ideal of $\A \otimes \Weyl{r}$ and set
  $I_\A \coloneqq \set{a\in \A}{a\otimes 1 \in I}$. Then clearly $I_\A \otimes \Weyl{r} \subseteq I$, and conversely,
  $I_\A \otimes \Weyl{r} \supseteq I$ by Lemma~\ref{lemma:tensorWeyl}.

  Part~\ref{item:WeylTotallySimple:fingenSubalgebra}: Let $\F$ be a $^*$\=/subalgebra of $\A \otimes \Weyl{r}$
  such that $1 \otimes \Weyl{r} \subseteq \F$ and assume that $\F$ is generated (as a $^*$\=/algebra) by
  $g_1,\dots,g_t \in \F$ with $t\in \NN_0$. By Lemma~\ref{lemma:tensorWeyl},
  $g_s = \sum_{k,\ell=0}^\infty g_{s,k,\ell}\otimes p^k q^\ell$ with coefficients $g_{s,k,\ell} \in \A$ such that
  $g_{s,k,\ell} \otimes 1 \in \F$ for all $s\in\{1,\dots,t\}$ and $k,\ell\in\NN_0$.
  Let $\B$ be the $^*$\=/subalgebra of $\A$ that is generated by $\set[\big]{g_{s,k,\ell}}{s\in\{1,\dots,t\};~k,\ell\in\NN_0}$
  and note that $\B$ actually is finitely generated.
  Clearly $\B \otimes \Weyl{r} \subseteq \F$ because $1\otimes \Weyl{r} \subseteq \F$.
  Conversely, $\B \otimes \Weyl{r} \supseteq \F$ because $g_s \in \B \otimes \Weyl{r}$ for all $s\in \{1,\dots,t\}$.
  So $\B \otimes \Weyl{r} = \F$.
  It is then clear that $\B \subseteq \A_\fin$, so it only remains to show that $\B \supseteq \A_\fin$.
  If $a\in \A$ fulfills $a\otimes 1 \in \F$, then $a \otimes 1 \in \B \otimes \Weyl{r}$ so that
  $a \otimes 1 = \sum_{k,\ell = 0}^\infty b_{k,\ell} \otimes p^k q^\ell$ with uniquely determined coefficients
  $b_{k,\ell} \in \B$. In particular, $b_{k,\ell} \neq 0$ only for $k=\ell =0$.
\end{proof}
Part~\ref{item:WeylTotallySimple:ideals} of Proposition~\ref{proposition:WeylTotallySimple} seems to be known, e.g.,
\cite[Lemma~4.5.1]{dixmier:envelopingAlgebras}.

By a \emph{$^*$\=/field extension of $\CC$} we mean a commutative $^*$\=/algebra that is a field,
i.e., a commutative $^*$\=/algebra such that $0\neq 1$ and every nonzero element has a multiplicative inverse.
Let $\KK$ be such a $^*$\=/field extension of $\CC$, then a \emph{$^*$\=/algebra over $\KK$} is a $^*$\=/algebra $\A$
together with a $^*$\=/algebra morphism $\zeta_\A \colon \KK \to \Zentrum(\A)$.
This way, $\A$ is also a vector space over the field $\KK$, and
we will usually suppress $\zeta_\A$ and simply write $z a \coloneqq \zeta_\A(z) a$ for $z\in \KK$, $a\in \A$.
A \emph{$\KK$\=/central $^*$\=/algebra} is a $^*$\=/algebra $\A$ over $\KK$ such that $\Zentrum(\A)$ has dimension $1$
over $\KK$, i.e., $\zeta_\A$ is an isomorphism; in this case we can simply identify $\Zentrum(\A)$ and $\KK$.
If $\A$ is a $^*$\=/algebra over some $^*$\=/field extension $\KK$ of $\CC$, and $\B$ is an arbitrary $^*$\=/algebra,
then $\A \otimes \B$ is again a $^*$\=/algebra over $\KK$, where $\zeta_{\A\otimes \B}(z) \coloneqq \zeta_\A(z) \otimes 1$
for all $z\in \KK$. If $\A$ is $\KK$\=/central and $\B$ is $\CC$\=/central, then $\A \otimes \B$ is $\KK$\=/central because
$\Zentrum(\A\otimes \B) = \Zentrum(\A) \otimes \Zentrum(\B) \cong \KK \otimes \CC \cong \KK$.

Moreover, a Lie $^*$\=/algebra over $\KK$ is a Lie algebra $\lie g$ over the field $\KK$
endowed with a $\KK$-antilinear involution $\argument^* \colon \lie g\to \lie g$ such that
$\kom{x}{y}^* = \kom{y^*}{x^*}$ for all $x,y\in \lie g$. Every $^*$\=/algebra over $\KK$ with the commutator
bracket is a Lie $^*$\=/algebra over $\KK$. We also extend the notion of enveloping $^*$\=/algebras accordingly:
An \emph{enveloping $^*$\=/algebra of a Lie $^*$\=/algebra $\lie g$ over $\KK$} is a tuple $(\A,\iota)$ of a
$^*$\=/algebra $\A$ over $\KK$ and a $\KK$-linear morphism of Lie $^*$\=/algebras $\iota \colon \lie g \to \A$
such that $\A$ is generated (as a $^*$\=/algebra over $\KK$) by $\iota(\lie g)$,
i.e., every element of $\A$ can be expressed as a $\KK$\=/linear combination of arbitrary products
(including the empty product $1$) of $\iota(\lie g)$.
Note that an enveloping $^*$\=/algebra over $\KK$ is not necessarily an enveloping $^*$\=/algebra (over $\CC$),
because the scalars need not be generated (over $\CC$) by the Lie $^*$\=/algebra and $1$.
A \emph{$\KK$-central enveloping $^*$\=/algebra} of $\lie g$ is an enveloping $^*$\=/algebra $(\A,\iota)$ of $\lie g$
such that $\A$ is $\KK$-central.
For example, let $r\in \NN_0$, then $\bigl(\KK \otimes \Weyl{r}, {\id_\KK} \otimes \iotaHeisenberg\bigr)$,
with $\id_\KK$ the identity map,
is a $\KK$-central enveloping $^*$\=/algebra of the Lie $^*$\=/algebra $\KK \otimes \Heisenberg{r}$ over $\KK$.

The main result of this section is mostly an adaptation of a classical theorem for nilpotent Lie algebras
\cite[Thm.~4.7.9]{dixmier:envelopingAlgebras} to the setting of Lie $^*$\=/algebras:

\begin{theorem} \label{theorem:weyl}
  Let $\KK$ be a $^*$\=/field extension of $\CC$,
  $\lie g$ a nilpotent Lie $^*$\=/algebra over $\KK$ such that $\lie g$ has finite dimension over $\KK$,
  and $(\A, \iota)$ a $\KK$-central enveloping $^*$\=/algebra of $\lie g$.
  Then there are $r\in \NN_0$ and a $\KK$-linear $^*$\=/algebra morphism
  $\Lambda \colon \A \to \KK \otimes \Weyl{r}$ such that $\Lambda$ is an isomorphism
  and $\Lambda\bigl(\iota(\lie g)\bigr) \subseteq \KK \otimes \WeylNil_r$.
\end{theorem}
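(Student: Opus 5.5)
We prove the statement by induction on $\dim_\KK \lie g$, following the proof of \cite[Thm.~4.7.9]{dixmier:envelopingAlgebras} but keeping track of the $^*$\=/involution and of the form of $\Lambda(\iota(\lie g))$. We may replace $\lie g$ by its image $\iota(\lie g)$, a nilpotent Lie $^*$\=/subalgebra over $\KK$ of $\A$. If $\iota(\lie g)$ is central, then $\A = \KK$ because $\A$ is $\KK$\=/central and generated by $\iota(\lie g)$. In that case $r=0$ and $\Lambda = \id_\KK$ work, since $\KK \otimes \WeylNil_0 = \KK$.

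Otherwise, the Lie algebra $\lie h \coloneqq \iota(\lie g)$ is nilpotent and not contained in $\KK$. Nilpotency allows us to choose the following elements:
\begin{itemize}
\item a hermitian element $y \in \lie h \setminus \KK$ with $\kom{y}{\lie h} \subseteq \KK$, taken from the second term of the upper central series modulo $\KK \cap \lie h$. This space is $^*$\=/stable, so the real or imaginary part of a suitable element works.
\item a hermitian $x\in\lie h$ with $c \coloneqq \kom{x}{y} \neq 0$. Since $c\in\KK$ is central, $c^* = \kom{x}{y}^* = \kom{y}{x} = -c$.
\end{itemize}
Rescaling by $\I c^{-1}$ (note that $c^{-1}$ exists in $\KK$, and the rescaled element stays hermitian because $(\I c^{-1})^* = \I c^{-1}$ by $c^*=-c$), we obtain hermitian $p \coloneqq \I c^{-1} x$ and $q\coloneqq y$ with $\kom{p}{q} = -\I$. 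These generate a copy of $\Weyl{1}$ inside $\A$.

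Let $\A' \coloneqq \set{a\in\A}{\kom{q}{a}=0}$ be the centralizer of $q$, a $^*$\=/subalgebra over $\KK$. The standard argument (the Weyl algebra splitting, as in \cite[Lemma~4.7.8]{dixmier:envelopingAlgebras}) shows that multiplication gives an isomorphism $\A' \otimes \Weyl{1} \cong \A$. This uses the following inputs:
\begin{itemize}
\item $\ad_q$ is locally nilpotent on $\A$, since $\kom{q}{\lie h}\subseteq \KK$.
\item For every $a\in \A$ the element $\sum_k \frac{(-\I)^k}{k!}\,p^k\,\ad_q^k(a)$, suitably signed, lies in $\A'$.
\end{itemize}
Then $\A'$ is $\KK$\=/central, because $\Zentrum(\A'\otimes\Weyl{1}) = \Zentrum(\A')\otimes\CC$. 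Moreover $\A'$ is generated over $\KK$ by $\lie h' \coloneqq \set{z - \I\,\kom{q}{z}\,p}{z\in\lie h,\ \kom{z}{q}\ldots}$, more precisely by the projections $z' \coloneqq z + \I\,\kom{q}{z}\,p$ of $z\in\lie h$. These lie in $\A'$ because $\kom{q}{z}\in\KK$. The map $z\mapsto z'$ is a $^*$\=/preserving Lie morphism from the ideal $\set{z\in\lie h}{\kom{q}{z}=0}$, and $\lie h'$ is a nilpotent Lie $^*$\=/algebra over $\KK$ of dimension at most $\dim\lie h - 1$. Since $y \mapsto q \otimes 1$ and the elements with nonzero bracket are absorbed into $p$, the dimension in fact drops by $2$ modulo scalars.

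By the induction hypothesis, there is an isomorphism $\Lambda'\colon\A'\to\KK\otimes\Weyl{r-1}$ with $\Lambda'(\lie h')\subseteq\KK\otimes\WeylNil_{r-1}$. We then define $\Lambda$ on $\A\cong\A'\otimes\Weyl{1}$ by $\Lambda' \otimes \id$, with the new pair $p,q$ taken as $p_1,q_1$ and the old indices shifted up by one.

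The crux, and the step I expect to be the main obstacle, is to verify that $\Lambda(\lie h) \subseteq \KK\otimes\WeylNil_r$. Each $z\in\lie h$ decomposes as $z = z'' + \alpha p + \beta q$-type terms, where the component along $p_1$ comes only from $\kom{q}{z}\in\KK$, a constant. This is compatible with $\WeylNil_r$, where $p_1$ may carry coefficients in $\genSAlg{q_2,\dots,q_r}$. The remaining parts are images of $\lie h'$, which lie in $\KK\otimes\WeylNil$ in the shifted variables $p_2,\dots,q_r$, except for possible dependence on $q_1$. To control this, I would pick $p$ first in the ordering, i.e.\ take $y$ as deep as possible in the central series, so that $q=q_1$ can appear only in the $a_0$ coefficient. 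If the naive ordering fails, I would reverse the roles and make the newly split-off pair the last index $r$ rather than the first. Then $\WeylNil_r$ allows $q_r$ in every coefficient $a_j$ with $j<r$, and $p_r$ appears only with scalar coefficient $\kom{q}{z}\in\KK$. That matches the triangular form of Definition~\ref{definition:WeylNil} exactly. So I would use the induction with the new pair placed at index $r$.
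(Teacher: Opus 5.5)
Your overall architecture matches the paper's, and it is sound. You choose a hermitian pair $x,y$ with $\kom{y}{\lie h}\subseteq\KK$, split $\A\cong\A'\otimes\Weyl{1}$, induct, and place the new pair at the \emph{last} index $r$. That placement is exactly the paper's $\Theta$. However, there is a genuine gap in the splitting step.

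\textbf{The centralizer of $q$ is the wrong subalgebra.} You take $\A'$ to be the centralizer of $q$ alone. Then $q\in\A'$, and $q$ is central in $\A'$ while $q\notin\KK$. So $\A'$ is not $\KK$-central, and the induction hypothesis cannot be applied to it. Moreover, $\A'$ does not commute with $p$. Hence $a\otimes b\mapsto a\,b$ is not even multiplicative on $\A'\otimes\Weyl{1}$, and it kills the nonzero element $q\otimes 1-1\otimes q$.

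Your generators have the same defect. The map $z\mapsto z'=z+\I\kom{q}{z}p$ sends $p$ to $0$ and is the identity on $\lie h_0\coloneqq\set{z\in\lie h}{\kom{q}{z}=0}$. So $\lie h'=\lie h_0$, which still contains $q$ and whose elements need not commute with $p$. (The map is also not a Lie morphism on all of $\lie h$: $\kom{p'}{q'}=0\neq -\I=\kom{p}{q}'$.) Your projection only removes the dependence on $p$. Nothing removes the dependence on $q$, so the claimed drop ``by $2$ modulo scalars'' does not occur.

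\textbf{The missing ingredient} is the second, exponential-type projection of Lemma~\ref{lemma:weyldecomp} (Dixmier's formula). First fix a sign slip: rescaling by $\I c^{-1}$ gives $\kom{p}{q}=+\I$, so use $-\I c^{-1}$ to get $\kom{p}{q}=-\I$. Then, for $u\in\lie h_0$, set
\[
\iota'(u)\coloneqq\sum_{m}\frac{(-\I)^m}{m!}\,q^m\ad_p^m(u).
\]
\begin{itemize}
\item By Jacobi and $\kom{q}{p}\in\KK$, the space $\lie h_0$ is $\ad_p$-stable. Hence $\iota'(u)$ commutes with $q$.
\item A telescoping computation shows that $\iota'(u)$ commutes with $p$.
\item $\iota'$ is a Lie $^*$-morphism with $\iota'(q)=0$.
\end{itemize}
Now take $\A'$ to be the $\KK$-algebra generated by $\iota'(\lie h_0)$. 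Injectivity of $\A'\otimes\Weyl{1}\to\A$ then follows from Proposition~\ref{proposition:WeylTotallySimple}. Surjectivity follows from $\lie h=\lie h_0\oplus\KK p$ together with the inversion formula
\[
u=\sum_n\frac{\I^n}{n!}\,\iota'(\ad_p^n u)\,q^n \qquad (u\in\lie h_0).
\]

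This inversion formula is also exactly what your final step needs but leaves open. It shows that every element of $\lie h$ equals $\lambda p+\sum_n a_nq^n$ with $\lambda\in\KK$ and $a_n\in\iota'(\lie h_0)$. That is the very Lie algebra to which the induction hypothesis is applied, so $\Lambda'(a_n)\in\KK\otimes\WeylNil_{r-1}$. Placing the new pair at index $r$ then yields $\Lambda(\lie h)\subseteq\KK\otimes\WeylNil_r$. Without this formula, your description ``$z=z''+\alpha p+\beta q$-type terms'' gives no control over how $q_r$ enters the coefficients.
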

The proof will occupy the rest of this section, throughout which we keep the notation and assumptions
of Theorem~\ref{theorem:weyl}.

First consider the case that $\A$ is commutative:
Then $\A = \Zentrum(\A) = \KK$ and we can choose $r\coloneqq 0$ so that $\Weyl{0} = \Zentrum(\Weyl{0}) = \CC = \WeylNil_0$
and $\Lambda \colon \A \to \KK \otimes \Weyl{0}$, $a \mapsto a \otimes 1$.
It is clear that this way, $\Lambda$ is an isomorphism and
$\Lambda\bigl(\iota(\lie g)\bigr) \subseteq \KK \otimes \WeylNil_0$.

Otherwise, i.e., if $\A$ is not commutative, the proof goes by induction over the dimension
of $\lie g$. For sufficiently small dimensions, namely, if $\dim_\KK (\lie g) \le 2$, the nilpotent Lie $^*$\=/algebra $\lie g$
must be abelian, which implies that $\A$ is commutative; in these cases we already know that Theorem~\ref{theorem:weyl}
holds. We make the induction assumption that Theorem~\ref{theorem:weyl} holds for all Lie $^*$\=/algebras
$\lie g'$ over $\KK$ with $\dim_\KK (\lie g') < \dim_\KK (\lie g)$. This means, if $(\A', \iota')$ is a $\KK$-central enveloping
$^*$\=/algebra of $\lie g'$, then there are $r'\in \NN_0$ and a $\KK$-linear $^*$\=/algebra morphism
$\Lambda' \colon \A' \to \KK \otimes \Weyl{r'}$ such that
$\Lambda'$ is an isomorphism and $\Lambda'\bigl(\iota'(\lie g')\bigr) \subseteq \KK \otimes \WeylNil_{r'}$.
We keep this assumption and the assumption that $\A$ is not commutative until the proof of Theorem~\ref{theorem:weyl}
is complete, i.e., until the end of this section. All the following definitions and intermediary results will be relevant
only in this section for the proof or Theorem~\ref{theorem:weyl}.

\begin{lemma}[Kirillov] \label{lemma:Kirillov}
  There are $x,y\in \lie g_\hermitian$ such that $\iota\bigl(\kom{x}{y}\bigr) = -\I$
  and $\lie g$ decomposes as $\lie g = \lie g' \oplus \KK x$ with
  $\lie g' \coloneqq \set[\big]{u \in \lie g}{\iota\bigl(\kom{u}{y}\bigr) = 0}$.
  Moreover, $\lie g'$ is a $\KK$-linear subspace and a Lie $^*$\=/subalgebra of $\lie g$,
  so $\lie g'$ is a Lie $^*$\=/algebra over $\KK$ itself. Finally, $\ad_x u \in \lie g'$ for all $u \in \lie g'$.
\end{lemma}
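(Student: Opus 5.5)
The plan is to choose $y$ from a suitable term of the ascending central series, so that all brackets with $y$ land in the scalars $\KK$. Then $\lie g'$ is the kernel of a $\KK$\=/linear functional, and everything else follows from the Jacobi identity.

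\emph{Choosing $y$.} Let $C_0 \coloneqq \{0\}$ and $C_{k+1} \coloneqq \set{u\in\lie g}{\kom{u}{\lie g}\subseteq C_k}$ be the ascending central series of $\lie g$.
\begin{itemize}
  \item Each $C_k$ is a $\KK$\=/linear subspace. By induction on $k$, using $\kom{u}{v}^* = \kom{v^*}{u^*}$, each $C_k$ is also stable under the $^*$\=/involution.
  \item Nilpotency gives $C_m = \lie g$ for some $m$.
  \item Since $\A$ is generated over $\KK$ by $\iota(\lie g)$, every element of $\iota(C_1)$ commutes with all of $\A$. So $\iota(C_1) \subseteq \Zentrum(\A) = \KK$.
  \item If $\iota(\lie g) \subseteq \KK$, then $\A$ would be commutative, which is excluded.
\end{itemize}
Hence there is a largest $k$ with $\iota(C_k) \subseteq \KK$, and $k<m$. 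Pick $y_0 \in C_{k+1}$ with $\iota(y_0) \notin \KK$. Write $y_0 = y_r + \I y_i$ with $y_r = \frac12(y_0+y_0^*)$ and $y_i = \frac1{2\I}(y_0-y_0^*)$; both are hermitian and lie in $C_{k+1}$ by $^*$\=/stability. At least one of $\iota(y_r), \iota(y_i)$ is not in $\KK$, and we call it $y$. So $y \in (C_{k+1})_\hermitian$ and $\iota(y) \notin \KK = \Zentrum(\A)$.

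\emph{The functional.} Define $f \colon \lie g \to \A$ by $f(u) \coloneqq \iota\bigl(\kom{u}{y}\bigr)$.
\begin{itemize}
  \item Since $\kom{u}{y} \in C_k$, $f$ takes values in $\KK$, and $f$ is $\KK$\=/linear.
  \item $f \neq 0$: otherwise $\iota(y)$ would commute with the generators $\iota(\lie g)$, hence lie in $\Zentrum(\A)=\KK$.
  \item From $\kom{u^*}{y} = \kom{y}{u}^* = -\kom{u}{y}^*$ we get $f(u^*) = -f(u)^*$.
\end{itemize}
Choose $u$ with $\lambda \coloneqq f(u) \neq 0$ and set $v \coloneqq -\I\lambda^{-1}u$, so that $f(v) = -\I$. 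Then $f(v^*) = -(-\I)^* = -\I$, so $x \coloneqq \frac12(v+v^*)$ is hermitian with $\iota\bigl(\kom{x}{y}\bigr) = f(x) = -\I$. Now $\lie g' = \ker f$ is a $\KK$\=/subspace of codimension one not containing $x$, hence $\lie g = \lie g' \oplus \KK x$. It is $^*$\=/stable because $f(u^*) = -f(u)^*$.

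\emph{Bracket properties.} The key computation is that $f$ vanishes on all of $\kom{\lie g}{\lie g}$. For $u,w\in\lie g$, Jacobi gives
\[
  f\bigl(\kom{u}{w}\bigr) = \kom[\big]{\iota(u)}{\iota(\kom{w}{y})} - \kom[\big]{\iota(w)}{\iota(\kom{u}{y})}.
\]
Both terms vanish because $\iota(\kom{w}{y})$ and $\iota(\kom{u}{y})$ lie in $\KK$, which is central. This single identity gives two conclusions at once: $\lie g'$ is closed under the bracket, hence a Lie $^*$\=/subalgebra over $\KK$, and $\ad_x u \in \lie g'$ for every $u$ (in particular for $u \in \lie g'$).

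The only genuinely delicate step is choosing $y$ so that $f$ is scalar-valued and nonzero simultaneously. Taking $y$ merely in the second centre of $\lie g$ can fail, since $\iota$ need not be injective. This is why the filtration is pulled back via $\iota$ against $\KK$, as above.
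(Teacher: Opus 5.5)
Your proof is correct. The choice of $y$ is the paper's idea in different form. The paper starts from some $w$ with $\iota(w)\notin\Zentrum(\A)$ and brackets it down a minimal number of times until further brackets become central under $\iota$. It then handles the $^*$\=/structure by noting that $\iota(\kom{u}{\tilde y^*})$ is central as well. You instead climb the ascending central series, prove its terms are $^*$\=/stable, and stop at the last term whose image is central. Similarly, your $x=\frac12(v+v^*)$, obtained via $f(u^*)=-f(u)^*$, is the paper's $\RE(\tilde x)$, where the paper splits $-\I$ into hermitian and antihermitian parts in $\KK$.

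The genuine difference is in the last step. The paper calls the Lie subalgebra property clear. It then proves $\ad_x \lie g'\subseteq\lie g'$ from the general fact that a codimension-one subalgebra of a nilpotent Lie algebra is an ideal: if $\kom{u}{x}-x\in\lie g'$, then $\ad_u^n x\neq 0$ for all $n$. This uses nilpotency a second time. Your single Jacobi computation shows that $f$ vanishes on $\kom{\lie g}{\lie g}$ because $f$ takes central values. That gives closure under the bracket and the stronger inclusion $\kom{\lie g}{\lie g}\subseteq\lie g'$, so $\ad_x$ maps all of $\lie g$ into $\lie g'$, without using nilpotency again. In exchange, the paper's argument is a reusable structural lemma that does not depend on how $\lie g'$ was defined.
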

\begin{proof}
  This is a variant of Kirillov's Lemma for nilpotent Lie algebras with one-dimensional center,
  adapted to the setting of Lie $^*$\=/algebras. We write $\RE, \IM \colon \lie g \to \lie g_\hermitian$ for
  the hermitian and antihermitian part, i.e., $\RE(u) \coloneqq \frac{1}{2}(u+u^*)$ and
  $\IM(u) \coloneqq \frac{1}{2\I}(u-u^*)$ for all $u\in\lie g$; then $u = \RE(u)+\I \IM(u)$.

  Assume $w\in \lie g$ fulfills $\iota(w) \notin \Zentrum(\A)$. By nilpotency of $\lie g$ there is $n\in \NN$
  such that $\ad_{v_n} \cdots \ad_{v_1} w = 0$
  for all $v_1,\dots,v_n \in \lie g$, so certainly $\iota\bigl( \ad_{v_n} \cdots \ad_{v_1} w \bigr) \in \Zentrum(\A)$
  for all $v_1,\dots,v_n \in \lie g$. Hence there also is a minimal such $n\in \NN$, so
  there are $v_1,\dots,v_{n-1} \in \lie g$ such that $\iota\bigl( \ad_{v_{n-1}} \cdots \ad_{v_1} w \bigr) \notin \Zentrum(\A)$
  and $\iota\bigl( \ad_{u} \ad_{v_{n-1}} \cdots \ad_{v_1} w \bigr) \in \Zentrum(\A)$ for all $u\in \lie g$.
  Set $\tilde y \coloneqq \ad_{v_{n-1}} \cdots \ad_{v_1} w \in \lie g$. As $\iota(\tilde y) \notin \Zentrum(\A)$
  there is $y \in \bigl\{ \RE(\tilde y), \IM(\tilde y) \bigr\} \subseteq \lie g_\hermitian$ such that $\iota(y) \notin \Zentrum(\A)$.
  From $\iota\bigl(\kom{u}{\tilde y}\bigr) \in \Zentrum(\A)$ for all $u\in \lie g$
  it follows that also $\iota\bigl(\kom{u}{\tilde y^*}\bigr) = -\iota\bigl(\kom{u^*}{\tilde y}\bigr)^* \in \Zentrum(\A)$
  for all $u\in \lie g$, and therefore $\iota\bigl( \kom{u}{y} \bigr) \in \Zentrum(\A)$
  for all $u\in \lie g$.

  There also is $\tilde x \in \lie g$ such that $\iota\bigl( \kom{\tilde x}{y} \bigr) \neq 0$, because otherwise
  $\kom[\big]{\iota(u)}{\iota(y)} = \iota\bigl( \kom{u}{y} \bigr) = 0$
  for all $u\in \lie g$ would imply the contradiction $\iota(y) \in \Zentrum(\A)$.
  As $\Zentrum(\A) = \KK$, we can rescale $\tilde x$ and assume that $\iota\bigl( \kom{\tilde x}{y} \bigr) = -\I$.
  Note that ${\kom{\tilde x}{y}} = {\kom{ \RE(\tilde x)}{y}} + \I \kom{ \IM(\tilde x)}{y}$ with
  ${\kom{ \RE(\tilde x)}{y}}, {\kom{ \IM(\tilde x)}{y}} \in \I \lie g_\hermitian$.
  So  $\iota\bigl( \kom{\tilde x}{y} \bigr) = -\I$ means that $\iota\bigl( \kom{ \RE(\tilde x)}{y} \bigr) = -\I$
  and $\iota\bigl( \kom{ \IM(\tilde x)}{y} \bigr) = 0$. Set $x \coloneqq \RE(\tilde x) \in \lie g_\hermitian$,
  then $\iota(\kom{x}{y}) = -\I$ as claimed.

  It is clear that $\lie g' = \set[\big]{u \in \lie g}{\iota\bigl(\kom{u}{y}\bigr) = 0}$ is a $\KK$-linear subspace
  of $\lie g$ and a Lie $^*$\=/subalgebra, so $\lie g'$ is a Lie $^*$\=/algebra over $\KK$ itself.
  The codimension of $\lie g'$ in $\lie g$ is $1$ because the $\KK$-linear map
  $\lie g \ni u \mapsto \iota\bigl(\kom{u}{y}\bigr) \in \A$ has image $\Zentrum(\A) = \KK$.
  As $x \notin \lie g'$ it follows that $\lie g = \lie g' \oplus \KK x$.

  It only remains to show that $\ad_x u \in \lie g'$ for all $u \in \lie g'$.
  This is a general and well-known consequence for Lie subalgebras of codimension $1$ of nilpotent Lie algebras:
  For any $u \in \lie g'$ there are unique elements $\lambda_u \in \KK$ and $u' \in \lie g'$ such that
  ${\kom{u}{x}} = \lambda_u x + u'$. If $\lambda_u \neq 0$ for some $u \in \lie g'$, then by rescaling $u$ we can
  assume that ${\kom{u}{x}} - x = u' \in \lie g'$. But by induction over $n$ it would then follow that
  $\ad_u^n x - x = {\kom[\big]{u}{\ad_u^{n-1} x -x}} + {\kom{u}{x}} - x \in \lie g'$ for all $n\in \NN$,
  in particular $\ad_u^n x \neq 0$ for all $n\in \NN$, contradicting nilpotency of $\lie g$.
\end{proof}

The crucial formula for $\iota'$ in the following \eqref{eq:iotaPrimeDef} is taken from
\cite[Lemma~4.7.8]{dixmier:envelopingAlgebras}:

\begin{lemma} \label{lemma:weyldecomp}
  Let $x,y\in \lie g_\hermitian$ and $\lie g' \subseteq \lie g$ be like in Lemma~\ref{lemma:Kirillov}.
  Then $\iota' \colon \lie g' \to \A$,
  \begin{equation}
    \label{eq:iotaPrimeDef}
    u
    \mapsto
    \iota'(u)
    \coloneqq
    \sum_{m=0}^\infty \frac{(-\I)^m}{m!} \iota(y)^m  \iota\bigl( \ad_x^m u\bigr)
  \end{equation}
  is a $\KK$\=/linear Lie $^*$\=/algebra morphism such that
  \begin{equation}
    \label{eq:weyldecomp:commuting}
    \kom[\big]{\iota'(u)}{\iota(x)} = \kom[\big]{\iota'(u)}{\iota(y)} = 0
    \quad\quad\text{for all }u\in\lie g'.
  \end{equation}
\end{lemma}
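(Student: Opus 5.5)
Throughout, write $X \coloneqq \iota(x)$ and $Y \coloneqq \iota(y)$, both hermitian, with $\kom{X}{Y} = \iota(\kom{x}{y}) = -\I$ central. The plan is to recognise \eqref{eq:iotaPrimeDef} as the restriction to $\iota(\lie g')$ of a ``twisted exponential'' $\theta(a) \coloneqq \sum_{m} \frac{(-\I)^m}{m!} Y^m \ad_X^m(a)$. This uses $\iota(\ad_x^m u) = \ad_X^m \iota(u)$, since $\iota$ is a Lie morphism. I would then derive all required properties from two elementary facts about $\ad_X$ and $Y$.

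\emph{Preliminaries.} The sum in \eqref{eq:iotaPrimeDef} is finite because $\ad_x$ is nilpotent on $\lie g$, so $\iota'$ is well defined and clearly $\KK$\=/linear. By Lemma~\ref{lemma:Kirillov}, $\ad_x$ maps $\lie g'$ into $\lie g'$. Hence every $\iota(\ad_x^m u)$ with $u\in\lie g'$ commutes with $Y$, by definition of $\lie g'$. Let $\C \coloneqq \set{a\in\A}{\kom{a}{Y}=0}$, a subalgebra of $\A$. It is stable under $\ad_X$ by the Jacobi identity, since $\kom{Y}{\kom{X}{a}} = \kom{\kom{Y}{X}}{a} + \kom{X}{\kom{Y}{a}} = 0$, using that $\kom{Y}{X}$ is a scalar. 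Moreover, $\ad_X$ is locally nilpotent on the subalgebra generated by $\iota(\lie g')$, because $\ad_X$ is a derivation that is nilpotent on the generators.

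\emph{Commutation relations \eqref{eq:weyldecomp:commuting}.} That $\iota'(u)$ commutes with $Y$ is immediate, since each summand $Y^m\iota(\ad_x^m u)$ lies in $\C$. For $X$, I would use $\kom{X}{Y^m} = -\I m Y^{m-1}$ and $\kom{X}{\iota(\ad_x^m u)} = \iota(\ad_x^{m+1}u)$. Then $\kom{X}{\iota'(u)}$ becomes a sum in which the coefficient of $Y^{m-1}\iota(\ad_x^m u)$ is
\[
\frac{(-\I)^{m}(-\I)}{(m-1)!}+\frac{(-\I)^{m-1}}{(m-1)!} = \frac{(-\I)^{m-1}}{(m-1)!}\bigl((-\I)^2+1\bigr)=0,
\]
so the sum telescopes to zero.

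\emph{$^*$\=/compatibility.} First, $(\ad_x v)^* = \kom{x}{v}^* = \kom{v^*}{x} = -\ad_x(v^*)$, hence $(\ad_x^m u)^* = (-1)^m \ad_x^m(u^*)$. Taking adjoints in \eqref{eq:iotaPrimeDef} gives $\iota'(u)^* = \sum_m \frac{\I^m}{m!}\,\iota((\ad_x^m u)^*)\,Y^m$. Moving $Y^m$ to the left, which is allowed since these factors commute with $Y$, and using $\I^m(-1)^m = (-\I)^m$, this equals $\iota'(u^*)$.

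\emph{Lie morphism (main step).} This is the only non-trivial step. I would prove that $\theta$ is multiplicative on the $\ad_X$\=/stable subalgebra $\C_0 \subseteq \C$ generated by $\iota(\lie g')$, where $\ad_X$ is locally nilpotent so that $\theta$ is defined. For $a,b\in\C_0$ the Leibniz rule gives $\ad_X^m(ab) = \sum_{k}\binom{m}{k}\ad_X^k(a)\ad_X^{m-k}(b)$. Since $Y$ commutes with every $\ad_X^k(a) \in \C$, the powers $Y^m = Y^kY^{m-k}$ can be distributed, and the Cauchy product formula yields $\theta(ab)=\theta(a)\theta(b)$. Consequently $\theta$ preserves commutators. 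Since $\iota' = \theta\circ\iota|_{\lie g'}$ and $\iota|_{\lie g'}$ is a Lie morphism, we get $\iota'(\kom{u}{v}) = \theta(\kom{\iota(u)}{\iota(v)}) = \kom{\iota'(u)}{\iota'(v)}$. The point to be careful about is that $Y$ genuinely commutes with all iterated $\ad_X$\=/images, which is exactly why one restricts to $\C$ rather than all of $\A$.
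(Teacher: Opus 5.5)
Your proof is correct and takes essentially the same route as the paper. The commutation relations and the $^*$-compatibility are checked by the same telescoping and adjoint computations. For the Lie-morphism property, the paper does the same Cauchy-product manipulation directly on the brackets $\iota\bigl(\kom{\ad_x^k u}{\ad_x^\ell w}\bigr)$, using that $\iota(y)$ commutes with every $\iota(\ad_x^j u)$; you instead phrase it as multiplicativity of the twisted exponential $\theta$ on the subalgebra generated by $\iota(\lie g')$. That costs you one extra check, which you handle correctly: $\ad_{\iota(x)}$ must be locally nilpotent on that subalgebra.
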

\begin{proof}
  This is straightforward to check, we give the details for convenience of the reader.
  We start with \eqref{eq:weyldecomp:commuting}, so let $u\in\lie g'$.
  From $\ad_x^m u \in \lie g'$ for all $m\in \NN_0$ it follows that
  \begin{equation*}
    \kom[\big]{\iota\bigl( \ad_x^m u\bigr)}{\iota(y)}
    =
    \iota\bigl( \kom{\ad_x^m u}{y} \bigr)
    =
    0
    \quad\quad\text{for all }m\in\NN_0,
  \end{equation*}
  and therefore $\kom[\big]{\iota'(u)}{\iota(y)} = 0$. Moreover,
  from $\kom[\big]{\iota(y)}{\iota(x)} = \iota\bigl(\kom{y}{x}\bigr) = \I$ it follows that
  $\kom[\big]{\iota(y)^m}{\iota(x)} = \I m \,\iota(y)^{m-1}$ for all $m\in \NN$, and of course
  $\kom[\big]{\iota(y)^m}{\iota(x)} = 0$ in the special case $m=0$. Therefore
  \begin{align*}
    \kom[\big]{\iota'(u)}{\iota(x)}
    &=
    \sum_{m=0}^\infty \frac{(-\I)^m}{m!} \kom[\big]{\iota(y)^m  \iota\bigl( \ad_x^m u\bigr)}{\iota(x)}
    \\
    &=
    \sum_{m=1}^\infty \frac{(-\I)^{m-1}}{(m-1)!} \iota(y)^{m-1} \iota\bigl( \ad_x^m u\bigr)
    -
    \sum_{m=0}^\infty \frac{(-\I)^m}{m!} \iota(y)^m \iota\bigl(\ad_x^{m+1} u\bigr)
    \\
    &=
    0
    .
  \end{align*}
  It is clear that $\iota'$ is $\KK$\=/linear, so it only
  remains to show that $\iota'$ is compatible with the Lie bracket and the $^*$\=/involution:
  for all $u,w\in \lie g'$, the identities
  \begin{align*}
    \iota'\bigl(\kom{u}{w}\bigr)
    &=
    \sum_{m=0}^\infty \frac{(-\I)^m}{m!}
    \iota(y)^m \iota\bigl( \ad_x^m \kom{u}{w} \bigr)
    \\
    &=
    \sum_{k,\ell=0}^\infty \frac{(-\I)^{k+\ell}}{k!\,\ell!}
    \iota(y)^{k+\ell} \iota\bigl( \kom{\ad_x^k u}{\ad_x^\ell w}\bigr)
    \\
    &=
    \sum_{k,\ell=0}^\infty \frac{(-\I)^{k+\ell}}{k!\,\ell!}
    {\kom[\big]{
      \iota(y)^k \iota\bigl( \ad_x^k u \bigr)
    }{
      \iota(y)^\ell \iota\bigl( \ad_x^\ell w\bigr)
    }}
    \\
    &=
    \kom[\big]{\iota'(u)}{\iota'(w)}
  \end{align*}
  and
  \begin{equation*}
    \iota'(u^*)
    =
    \sum_{m=0}^\infty \frac{(-\I)^m}{m!} \iota(y)^m \iota\bigl( \ad_x^m u^*\bigr)
    =
    \sum_{m=0}^\infty \frac{(-\I)^m}{m!} (-1)^m \iota(y)^m \iota\bigl( \ad_x^m u\bigr)^*
    =
    \iota'(u)^*
  \end{equation*}
  hold.
\end{proof}
Note that $\iota'$ is not injective, e.g., $\iota'(y) = 0$. We introduce some more notation.

\begin{definition}
  We write $\A'$ for the $^*$\=/subalgebra over $\KK$ of $\A$ that is generated by $\iota'(\lie g')$,
  i.e., $\A'$ is the set of $\KK$\=/linear combinations of arbitrary products (including the empty product $1$)
  of elements of $\iota'(\lie g')$.
\end{definition}
Note that $(\A',\iota')$ is an enveloping $^*$\=/algebra of the Lie $^*$\=/algebra $\lie g'$ over $\KK$.
Yet before we can apply the induction hypothesis to $(\A',\iota')$, we have to show that $\A'$ is $\KK$\=/central.

Recall that the elements $x,y\in \lie g_\hermitian$ from Lemma~\ref{lemma:Kirillov} fulfill the canoncial
commutation relations $\kom{\iota(x)}{\iota(y)} = \iota\bigl(\kom{x}{y}\bigr) = -\I$.

\begin{definition}
  For simplicity, we write $p\coloneqq p_1 \in \Weyl{1}$ and $q \coloneqq q_1 \in \Weyl{1}$ for the
  generators of $\Weyl{1}$. Let $\Xi \colon \Weyl{1} \to \A$ be the $^*$\=/algebra morphism such that
  $\Xi(p) = \iota(x)$ and $\Xi(q) = \iota(y)$. Then we define the $\CC$\=/linear map
  $\Psi \colon \A' \otimes \Weyl{1} \to \A$,
  \begin{equation}
    a \otimes b \mapsto \Psi(a \otimes b) \coloneqq a\,\Xi(b)
    .
  \end{equation}
\end{definition}

\begin{proposition} \label{proposition:Psi}
  The $\CC$\=/linear map $\Psi \colon \A' \otimes \Weyl{1} \to \A$ has the following properties:
  \begin{enumerate}
    \item \label{item:Psi:Klinmorph} $\Psi$ is a $\KK$\=/linear $^*$\=/algebra morphism.
    \item \label{item:Psi:iso} $\Psi$ is an isomorphism.
    \item \label{item:Psi:ongen} $\Psi^{-1}\bigl(\iota(\lie g) \bigr) \subseteq \KK \otimes p + \sum_{n=0}^\infty \iota'(\lie g') \otimes q^n$.
  \end{enumerate}
\end{proposition}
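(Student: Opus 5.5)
The plan is to treat the three parts in order. Everything rests on the commutation relations \eqref{eq:weyldecomp:commuting} from Lemma~\ref{lemma:weyldecomp}, the inversion of formula \eqref{eq:iotaPrimeDef}, and part~\ref{item:WeylTotallySimple:ideals} of Proposition~\ref{proposition:WeylTotallySimple}.

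\textbf{Part~\ref{item:Psi:Klinmorph}.} By \eqref{eq:weyldecomp:commuting}, every generator $\iota'(u)$ of $\A'$ commutes with $\iota(x)=\Xi(p)$ and with $\iota(y)=\Xi(q)$. Scalars in $\KK$ are central, so they commute with everything. Hence $\kom{a}{\Xi(b)}=0$ for all $a\in\A'$ and $b\in\Weyl{1}$. The inclusion $\A'\hookrightarrow\A$ and $\Xi$ are then two $^*$\=/algebra morphisms with commuting ranges, and the general remark on tensor products at the start of this section shows that $\Psi$ is a $^*$\=/algebra morphism. For $\KK$\=/linearity, recall that $\A'\otimes\Weyl{1}$ is a $^*$\=/algebra over $\KK$ via $z\mapsto z\otimes 1$; here $z\in\A'$ because $\A'$ is a $^*$\=/subalgebra over $\KK$. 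Therefore $\Psi\bigl((z\otimes1)(a\otimes b)\bigr)=za\,\Xi(b)=z\,\Psi(a\otimes b)$.

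\textbf{Part~\ref{item:Psi:iso}.} For surjectivity, I will invert \eqref{eq:iotaPrimeDef}. Since $\ad_x$ maps $\lie g'$ into itself and is nilpotent, all sums below are finite. For every $u\in\lie g'$ I claim
\begin{equation*}
  \iota(u)=\sum_{k=0}^\infty \frac{\I^k}{k!}\,\iota(y)^k\,\iota'\bigl(\ad_x^k u\bigr).
\end{equation*}
To verify this, substitute \eqref{eq:iotaPrimeDef} and collect terms by $N=k+m$. This gives $\sum_N \frac{1}{N!}(\I-\I)^N\iota(y)^N\iota(\ad_x^N u)$, in which only the $N=0$ term $\iota(u)$ survives. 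Consequently $\iota(u)=\Psi\bigl(\sum_k \frac{\I^k}{k!}\iota'(\ad_x^k u)\otimes q^k\bigr)$. Together with $\iota(x)=\Psi(1\otimes p)$, $\lie g=\lie g'\oplus\KK x$ and $\KK$\=/linearity, this shows that the image of $\Psi$ contains $\iota(\lie g)$. The image is a $\KK$\=/subalgebra and $\iota(\lie g)$ generates $\A$ over $\KK$, so $\Psi$ is surjective.

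For injectivity, $\ker\Psi$ is an ideal of $\A'\otimes\Weyl{1}$. By part~\ref{item:WeylTotallySimple:ideals} of Proposition~\ref{proposition:WeylTotallySimple}, it has the form $I_{\A'}\otimes\Weyl{1}$ with $I_{\A'}=\set{a\in\A'}{a\otimes1\in\ker\Psi}$. Since $\Psi(a\otimes 1)=a$, this set is $\{0\}$, so $\ker\Psi=\{0\}$.

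\textbf{Part~\ref{item:Psi:ongen}.} This follows directly from the formulas just obtained. Write a general element of $\lie g$ as $u+\lambda x$ with $u\in\lie g'$ and $\lambda\in\KK$. Then
\begin{equation*}
  \Psi^{-1}\bigl(\iota(u+\lambda x)\bigr)=\lambda\otimes p+\sum_k \frac{\I^k}{k!}\,\iota'(\ad_x^k u)\otimes q^k .
\end{equation*}
The first term lies in $\KK\otimes p$. Each summand of the second term lies in $\iota'(\lie g')\otimes q^k$, because $\ad_x^k u\in\lie g'$ by Lemma~\ref{lemma:Kirillov} and the complex scalar $\frac{\I^k}{k!}$ can be absorbed into $\iota'$.

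I expect the main point needing care to be the inversion identity in part~\ref{item:Psi:iso}. The step needs three things: $\iota(y)$ commutes with each $\iota'(\ad_x^k u)$, which holds by \eqref{eq:weyldecomp:commuting}; $\ad_x$ preserves $\lie g'$; and the double sum is reindexed correctly. The remaining steps are bookkeeping.
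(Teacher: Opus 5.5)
Your proposal is correct and follows the paper's proof essentially step by step. The shared ingredients are commuting ranges plus the general tensor-product remark for part~1, part~\ref{item:WeylTotallySimple:ideals} of Proposition~\ref{proposition:WeylTotallySimple} for injectivity, and Dixmier's inverse formula for surjectivity and part~3, verified by the same binomial reindexing. The only cosmetic difference is that you first prove $\iota(u)=\sum_k \frac{\I^k}{k!}\iota(y)^k\iota'(\ad_x^k u)$ in $\A$ and then rewrite it as $\Psi$ of an element. The paper instead packages this as a $\KK$\=/linear map $\phi\colon \lie g\to\A'\otimes\Weyl{1}$ and checks $\Psi\circ\phi=\iota$.
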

\begin{proof}
  First we check that $\Psi$ is a $\KK$\=/linear $^*$\=/algebra morphism.
  By definition, $\Psi\colon \A' \otimes \Weyl{1} \to \A$ is the $\CC$\=/linear map corresponding to the $\CC$\=/bilinear map
  $\A' \times \Weyl{1} \ni (a,b) \mapsto a \,\Xi(b) \in \A$ by the universal property of the tensor product.
  However, as $\Psi$ is the identity on the first
  factor, it is also $\KK$\=/linear, because $\Psi\bigl(z(a\otimes b)\bigr) = \Psi(za\otimes b) = za \, \Xi(b)
  = z \Psi(a\otimes b)$ for all $z\in \KK$ and $a \in \A'$, $b\in \Weyl{1}$.
  As the $^*$\=/subalgebras $\A'$ and $\Xi(\Weyl{1})$ of $\A$ are generated by $\iota'(\lie g')$ and $\{\iota(x),\iota(y)\}$,
  respectively, it follows from \eqref{eq:weyldecomp:commuting} that $\kom{a}{\Xi(b)} = 0$ for all $a \in \A'$, $b\in \Weyl{1}$.
  This guarantees that $\Psi$ is a $^*$\=/algebra morphism, which completes the proof of part~\ref{item:Psi:Klinmorph}.

  Next we prove parts~\ref{item:Psi:iso} and \ref{item:Psi:ongen}. Write $I \coloneqq \ker(\Psi)$ for the kernel of $\Psi$.
  By part~\ref{item:WeylTotallySimple:ideals} of Proposition~\ref{proposition:WeylTotallySimple},
  there is an ideal $I_{\A'}$ of $\A'$ such that $\ker(\Psi) = I_{\A'} \otimes \Weyl{1}$.
  But for any $a\in I_{\A'}$ it then follows that $a = a\,\Xi(1) = \Psi(a\otimes 1) = 0$, so $I_{\A'} = \{0\}$
  and consequently $\ker(\Psi) = \{0\}$, which means that $\Psi$ is injective.

  Now recall that $\lie g = \lie g' \oplus \KK x$ and that $\ad_x$ restricts to a derivation of $\lie g'$
  (see Lemma~\ref{lemma:Kirillov}).
  In order to prove surjectivity of $\Psi$ and part~\ref{item:Psi:ongen}, we construct the
  $\KK$\=/linear map $\phi \colon \lie g \to \A' \otimes \Weyl{1}$,
  \begin{equation*}
    \begin{split}
      x &\mapsto \phi(x) \coloneqq 1 \otimes p \\
      u &\mapsto \phi(u) \coloneqq \sum_{n=0}^\infty \frac{\I^n}{n!} \iota'\bigl(\ad_x^n u\bigr) \otimes q^n
        \quad\quad\text{for }u \in \lie g'.
    \end{split}
  \end{equation*}
  This formula for $\phi$ is again taken from \cite[Lemma~4.7.8]{dixmier:envelopingAlgebras}.
  The proof is complete once we have shown that $\Psi \circ \phi = \iota$, because this, first,
  implies that the generating subset $\iota(\lie g)$ of $\A$ is contained in the image of $\Psi$,
  so $\Psi$ must be surjective and therefore an isomorphism. And second, it then follows that $\phi = \Psi^{-1}\circ \iota$,
  so $\Psi^{-1}\bigl(\iota(\lie g) \bigr) = \phi(\lie g)$, which clearly is a subset of
  $\KK \otimes p + \sum_{n=0}^\infty \iota'(\lie g') \otimes q^n$.

  As $\Psi$, $\phi$, and $\iota$ are $\KK$\=/linear maps, it is sufficient to check the identity $\Psi \circ \phi = \iota$
  on $x$ and on $\lie g'$. Clearly $\Psi \bigl(\phi(x)\bigr) = \Psi (1\otimes p) = \Xi(p) = \iota(x)$.
  Next let $u\in \lie g'$, then
  \begin{align*}
    \Psi \bigl(\phi(u)\bigr)
    &=
    \sum_{n=0}^\infty \frac{\I^n}{n!} \Psi \bigl( \iota'\bigl(\ad_x^n u\bigr) \otimes q^n \bigr)
    \\
    &=
    \sum_{n=0}^\infty \frac{\I^n}{n!} \iota'\bigl(\ad_x^n u\bigr) \,\iota(y)^n
    \\
    &=
    \sum_{n,m=0}^\infty \frac{\I^n\,(-\I)^m}{n!\,m!} \iota(y)^{n+m}\, \iota\bigl( \ad_x^{n+m} u\bigr)
    \\
    &=
    \sum_{k=0}^\infty \sum_{m=0}^k \binom{k}{m} \frac{\I^{k-m} (-\I)^m}{k!} \iota(y)^k \,\iota\bigl(\ad_x^k u\bigr)
    \\
    &=
    \iota(u) + \sum_{k=1}^\infty \frac{0^k}{k!} \iota( y)^k \, \iota\bigl( \ad_x^k u\bigr)
    \\
    &=
    \iota(u)
    ,
  \end{align*}
  where we use the substitution $k \coloneqq n+m$.
\end{proof}

\begin{corollary} \label{corollary:Psisim}
  The enveloping $^*$\=/algebra $(\A',\iota')$ of $\lie g'$ is $\KK$\=/central.
\end{corollary}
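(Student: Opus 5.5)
We transport the question through the isomorphism $\Psi \colon \A' \otimes \Weyl{1} \to \A$ of Proposition~\ref{proposition:Psi}. Since $\Psi$ is a $\KK$\=/linear $^*$\=/algebra isomorphism, it maps centers onto centers: $\Psi\bigl(\Zentrum(\A' \otimes \Weyl{1})\bigr) = \Zentrum(\A) = \KK$. By the fact recalled at the beginning of this section, $\Zentrum(\A' \otimes \Weyl{1}) = \Zentrum(\A') \otimes \Zentrum(\Weyl{1})$. It is well known that $\Zentrum(\Weyl{1}) = \CC$; for instance, this follows from Lemma~\ref{lemma:tensorWeyl} or by taking commutators of $\sum_{k,\ell} c_{k,\ell} p^k q^\ell$ with $p$ and $q$. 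Hence $\Zentrum(\A' \otimes \Weyl{1}) = \Zentrum(\A') \otimes 1$, and consequently $\Psi\bigl(\Zentrum(\A')\otimes 1\bigr) = \Zentrum(\A')$, viewed inside $\A$ since $\Psi(a \otimes 1) = a$. We conclude that $\Zentrum(\A') = \KK$ as subsets of $\A$.

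It remains to confirm that this is exactly $\KK$\=/centrality of $\A'$ in the sense of the definition, that is, that the structure map $\zeta_{\A'}\colon \KK \to \Zentrum(\A')$ is an isomorphism. The $\KK$\=/structure on $\A'$ comes from the inclusion $\A' \subseteq \A$: $\A'$ consists of $\KK$\=/linear combinations of products, so it contains $\KK \cdot 1$, and $\zeta_{\A'}$ is the restriction of $\zeta_\A$. Since $\zeta_\A$ is injective and $\KK \subseteq \Zentrum(\A')$ trivially, the equality $\Zentrum(\A') = \KK$ says precisely that $\zeta_{\A'}$ is surjective. Therefore $\zeta_{\A'}$ is an isomorphism.

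Alternatively, one can argue directly, avoiding the tensor-center formula. Take $z \in \Zentrum(\A')$. Then $z$ commutes with $\A'$ by assumption, and it commutes with $\Xi(\Weyl{1})$ by \eqref{eq:weyldecomp:commuting}. Since $\A$ is generated by $\A'$ and $\Xi(\Weyl{1})$ by surjectivity of $\Psi$, it follows that $z \in \Zentrum(\A) = \KK$. This short argument is the one I would write. The only point requiring care is that $\A = \Psi(\A'\otimes\Weyl{1}) = \A'\,\Xi(\Weyl{1})$, which is exactly part~\ref{item:Psi:iso} of Proposition~\ref{proposition:Psi}; no step is a genuine obstacle.
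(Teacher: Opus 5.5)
Your proposal is correct, and you give two arguments. Your first paragraph is the paper's own argument: transport the center through the isomorphism $\Psi$, then use $\Zentrum(\A'\otimes\Weyl{1})=\Zentrum(\A')\otimes\Zentrum(\Weyl{1})$ and $\Zentrum(\Weyl{1})=\CC$. Noting $\Psi(a\otimes 1)=a$ even sharpens the paper's abstract $\KK$\=/linear isomorphism $\Zentrum(\A')\cong\KK$ into an equality of subsets of $\A$, which makes the statement about $\zeta_{\A'}$ immediate. (A small quibble: $\Zentrum(\Weyl{1})=\CC$ does not follow from the statement of Lemma~\ref{lemma:tensorWeyl} as such, only from its proof idea of applying $\ad_p$ and $\ad_q$ to a central element; your commutator argument is the right justification.)

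The direct argument you say you would actually write is a genuinely different and leaner route. An element of $\Zentrum(\A')$ commutes with $\A'$, and by \eqref{eq:weyldecomp:commuting} it commutes with $\Xi(\Weyl{1})$. By surjectivity of $\Psi$, $\A$ is spanned by the products $a\,\Xi(b)$ with $a\in\A'$ and $b\in\Weyl{1}$. Hence $\Zentrum(\A')\subseteq\Zentrum(\A)=\KK$. The reverse inclusion holds because $\A'$ contains $\KK\cdot 1$.

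This needs only the surjectivity half of Proposition~\ref{proposition:Psi} plus the commutation relations. It does not use injectivity of $\Psi$, the tensor-product formula for centers, or the center of the Weyl algebra. The paper's version instead reuses the full isomorphism $\A\cong\A'\otimes\Weyl{1}$, which is needed anyway in the proof of Theorem~\ref{theorem:weyl}, so it costs nothing extra there. For this corollary alone, though, your direct argument is more economical.
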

\begin{proof}
  Clearly $\Zentrum(\A') \otimes \Zentrum(\Weyl{1}) = \Zentrum(\A') \otimes \CC \cong \Zentrum(\A')$.
  Moreover, as $\Psi \colon \A' \otimes \Weyl{1} \to \A$ is a $\KK$\=/linear $^*$\=/algebra morphism and an isomorphism
  by Proposition~\ref{proposition:Psi},
  $\Zentrum(\A') \otimes \Zentrum(\Weyl{1}) = \Zentrum(\A' \otimes \Weyl{1}) \cong \Zentrum(\A) = \KK$.
  Therefore $\Zentrum(\A') \cong \KK$, and in particular, $\Zentrum(\A')$ has dimension $1$ over $\KK$.
\end{proof}

\begin{proof}[of Theorem~\ref{theorem:weyl}]
  Combining Corollary~\ref{corollary:Psisim} and the induction hypothesis shows that
  there are $r'\in \NN_0$ and a $\KK$\=/linear $^*$\=/algebra morphism
  $\Lambda' \colon \A' \to \KK \otimes \Weyl{r'}$
  such that $\Lambda'$ is an isomorphism and $\Lambda' \bigl(\iota'(\lie g')\bigr) \subseteq \KK \otimes \WeylNil_{r'}$.

  Set $r\coloneqq r'+1$. First we define the $^*$\=/algebra morphism
  $\Theta \colon \Weyl{r} \to \Weyl{r'} \otimes \Weyl{1}$
  \begin{align*}
    &&
    p_j &\mapsto \Theta(p_j) \coloneqq p_j \otimes 1
    &&\text{and}&
    q_j &\mapsto \Theta(q_j) \coloneqq q_j \otimes 1
    &&\text{for }j\in\{1,\dots,r'\}
  \shortintertext{and}
    &&
    p_r &\mapsto \Theta(p_r) \coloneqq 1 \otimes p
    &&\text{and}&
    q_r &\mapsto \Theta(q_r) \coloneqq 1 \otimes q
    .
    &&
  \end{align*}
  Clearly, $\Theta$ is an isomorphism.
  By Proposition~\ref{proposition:Psi}, $\Psi \colon \A' \otimes \Weyl{1} \to \A$ also is a $\KK$\=/linear $^*$\=/algebra
  morphism and an isomorphism. We define the composition
  \begin{equation*}
    \Lambda
    \coloneqq
    \bigl({\id_\KK} \otimes \Theta^{-1} \bigr) \circ \bigl( \Lambda' \otimes {\id_{\Weyl{1}}} \bigr) \circ \Psi^{-1}
    \colon
    \A \to \KK \otimes \Weyl{r}
    ,
  \end{equation*}
  i.e., $\Lambda$ makes the diagram
  \begin{equation}
    \begin{tikzcd}[sep = large, column sep=huge]
      \A \arrow[r, "\Psi^{-1}"] \arrow[d,"\Lambda",swap] & \A' \otimes \Weyl{1} \arrow[d, "\Lambda' \otimes {\id_{\Weyl{1}}}"]\\
      \KK \otimes \Weyl{r} & \KK \otimes \Weyl{r'} \otimes \Weyl{1} \arrow[l, "\quad{\id_\KK} \otimes {\Theta^{-1}}", swap]
    \end{tikzcd}
  \end{equation}
  commute. By construction, $\Lambda$ is a $\KK$\=/linear $^*$\=/algebra morphism and an isomorphism.

  Part~\ref{item:Psi:ongen} of Proposition~\ref{proposition:Psi} shows that
  $\Psi^{-1}\bigl(\iota(\lie g) \bigr) \subseteq \KK \otimes p + \sum_{n=0}^\infty \iota'(\lie g') \otimes q^n$.
  As $\Lambda'$ is a $\KK$\=/linear $^*$\=/algebra morphism and
  $\Lambda'\bigl(\iota'(\lie g')\bigr) \subseteq \KK \otimes \WeylNil_{r'}$, it follows that
  \begin{equation*}
    \bigl( \Lambda' \otimes {\id_{\Weyl{1}}} \bigr)\Bigl( \Psi^{-1}\bigl(\iota(\lie g)\bigr)\Bigr)
    \subseteq
    \KK \otimes 1\otimes p + \sum_{n=0}^\infty \KK \otimes \WeylNil_{r'} \otimes q^n
    .
  \end{equation*}
  Finally, $\Theta^{-1}(1\otimes p) = p_r \in \WeylNil_r$ and
  $\Theta^{-1}(\WeylNil_{r'} \otimes q^n) = \set[\big]{aq_r^n}{a \in \WeylNil_{r-1}} \subseteq \WeylNil_r$ for all $n\in \NN_0$,
  which follows immediately from Definition~\ref{definition:WeylNil}.
  Therefore $\Lambda\bigl(\iota(\lie g) \bigr) \subseteq \KK \otimes \WeylNil_r$.
\end{proof}

\section{Real prime ideals} \label{sec:realprime}
Consider any $^*$\=/algebra $\A$. For $I, J \subseteq \A$ we write
\begin{equation}
  I \cdot J \coloneqq \set[\big]{ab}{a\in I, b\in J}
  \quad\quad\text{and}\quad\quad
  (I:J) \coloneqq \set[\big]{a\in \A}{ a J \subseteq I}
  .
\end{equation}
Clearly $(I:J)\cdot J \subseteq I$.
If $I$ and $J$ are ideals of $\A$, then $(I:J)$ is again an ideal of $\A$, called the \emph{ideal quotient} of $I$ over $J$.
However, in our convention, $I\cdot J$ is, in general, not closed under addition, hence not an ideal.
Recall that an ideal $I$ of $\A$ is called \emph{proper} if $I \neq \A$, and
$I$ is called \emph{prime} if $I$ is proper and the following holds:
Whenever two ideals $J$ and $K$ of $\A$ fulfill $I \subseteq J$, $I \subseteq K$, and $J \cdot K \subseteq I$,
then $I=J$ or $I=K$.
Equivalently, a proper ideal $I$ of $\A$ is prime if and only if the following holds:
Whenever two elements $a,c\in \A$ fulfill $abc \in I$ for all $b\in \A$, then $a\in I$ or $c\in I$.

\begin{lemma} \label{lemma:realIdealDecomposition}
  Let $\A$ be a $^*$\=/algebra and $I, J$ two ideals of $\A$. If $I$ is real, then $(I:J)$ is also real.
\end{lemma}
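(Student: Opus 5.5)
We need to show: if $\sum_{j=1}^k a_j^* a_j \in (I:J)$, then every $a_j \in (I:J)$, i.e.\ $a_j b \in I$ for all $b \in J$. The natural move is to conjugate the sum of squares by an element of $J$ so that the realness of $I$ can be applied directly.

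Fix $b \in J$. Since $(I:J)$ is an ideal, $\sum_j a_j^* a_j \in (I:J)$ gives $\bigl(\sum_j a_j^* a_j\bigr) b \in I$, because $(I:J)\cdot J \subseteq I$. Multiplying from the left by $b^*$ and using that $I$ is a two-sided ideal, we get
\begin{equation*}
  \sum_{j=1}^k (a_j b)^* (a_j b) = b^* \Bigl(\sum_{j=1}^k a_j^* a_j\Bigr) b \in I.
\end{equation*}
As $I$ is real, it follows that $a_j b \in I$ for all $j \in \{1,\dots,k\}$.

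Since $b \in J$ was arbitrary, $a_j J \subseteq I$, so $a_j \in (I:J)$ for all $j$. Hence $(I:J)$ is real.

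There is no real obstacle here. The only point needing care is that the membership $\bigl(\sum_j a_j^* a_j\bigr) b \in I$ comes directly from the definition of $(I:J)$; only the left multiplication by $b^*$ uses that $I$ is an ideal. We never need $J$ to be $^*$\=/stable, since we multiply by $b^*$ only on the outside of an element already known to lie in $I$.
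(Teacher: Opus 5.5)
Your proof is correct and is essentially the paper's own argument: for $c \in J$, multiply $\bigl(\sum_j a_j^* a_j\bigr) c \in I$ on the left by $c^*$ to get $\sum_j (a_j c)^*(a_j c) \in I$, then use that $I$ is real. Your remark that $J$ need not be $^*$\=/stable is right (and the phrase ``since $(I:J)$ is an ideal'' is harmless but unnecessary, since you only use the definition of $(I:J)$).
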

\begin{proof}
  Assume $I$ is real and consider $k\in \NN$ and $a_1,\dots,a_k \in \A$ such that $\sum_{j=1}^k a_j^* a_j \in (I:J)$.
  Then $\sum_{j=1}^k a_j^* a_j c \in I$ for all $c\in J$, and therefore
  $\sum_{j=1}^k (a_j c)^* (a_j c)  = c^* \sum_{j=1}^k a_j^* a_j c \in I$ for all $c\in J$.
  It follows that $a_j c \in I$ for all $j\in \{1,\dots,k\}$ and $c\in J$, so $a_1, \dots, a_k \in (I:J)$.
\end{proof}
It is well-known that every semiprime ideal of a Noetherian ring is the intersection of finitely many prime ideals
(possibly the empty intersection, which is the whole ring by definition).
This intersection theorem can be adapted to $^*$\=/algebras in the following way:

\begin{theorem} \label{theorem:realIdealDecomposition}
  Let $\A$ be a Noetherian $^*$\=/algebra. Then every real ideal of $\A$ is the intersection of
  finitely many real prime ideals of $\A$ (possibly the empty intersection which by definition equals $\A$).
\end{theorem}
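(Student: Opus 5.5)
We argue by Noetherian induction: the standard device for showing that every element of a family of ideals in a Noetherian ring has some property. Suppose the statement fails. Then the set of real ideals of $\A$ that are \emph{not} finite intersections of real prime ideals is nonempty. Since $\A$ is Noetherian, this set has a maximal element $I$. We derive a contradiction by showing that $I$ is itself such a finite intersection.

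First, $I \neq \A$, because $\A$ is the empty intersection. Next, $I$ cannot be prime, since otherwise it would be a one-element intersection of real prime ideals. So there are ideals $J,K$ of $\A$ with $I \subseteq J$, $I \subseteq K$, $J\cdot K \subseteq I$, and $I \neq J$, $I \neq K$; hence $I \subsetneq J$ and $I \subsetneq K$. We now pass to ideal quotients. Set $J' \coloneqq (I:K)$ and $K' \coloneqq (I:J')$. By Lemma~\ref{lemma:realIdealDecomposition} both are real ideals, since $I$ is real. Moreover:
\begin{itemize}
\item $J \subseteq J'$ because $J\cdot K\subseteq I$, so $I\subsetneq J'$;
\item $K \subseteq K'$ because $J'\cdot K\subseteq I$ shows $K\subseteq (I:J')$, so $I\subsetneq K'$;
\item $I\subseteq J'$ and $I\subseteq K'$ because $I$ is an ideal.
\end{itemize}
By maximality of $I$, both $J'$ and $K'$ are finite intersections of real prime ideals.

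The key step, and the only one needing the reality assumption beyond Lemma~\ref{lemma:realIdealDecomposition}, is to show $I = J'\cap K'$. The inclusion $\subseteq$ is clear. Conversely, let $a \in J'\cap K'$. Then $a^*a \in J'\cdot K' \subseteq I$: indeed $a^* \in J'$, because $J'$ is real and hence $^*$\=/stable, and $K' = (I:J')$ gives $J'\cdot K'\subseteq I$. Since $I$ is real, $a \in I$.

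Two points need checking here. First, real ideals are $^*$\=/stable. For $b$ in a real ideal $L$, we have $(b^*)^*b^* \in L$, since $b^* b\,b^*$ lies in the two-sided ideal $L$; thus $b b^* \in L$, and reality gives $b^*\in L$. Second, the order of factors: $a^*\in J'$ and $a\in K'$, so $a^*a\in J'\cdot K'$, which is exactly the product direction controlled by the definition $K'=(I:J')$, where elements of $K'$ multiplied on the left by $J'$ land in $I$. The factor order in the paper's definition $(I:J)=\{a : aJ\subseteq I\}$ is a right-multiplication condition, and one must match it with the order in $a^*a$. If needed, one swaps the roles and uses $(I:K')$-type quotients instead. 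I expect this bookkeeping of left versus right to be the main (minor) obstacle.

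With $I = J'\cap K'$ established, $I$ is an intersection of the two finite families of real prime ideals representing $J'$ and $K'$. This contradicts the choice of $I$ and proves the theorem.
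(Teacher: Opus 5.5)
Your strategy is the paper's: a maximal counterexample via Noetherianity, ideal quotients that are real by Lemma~\ref{lemma:realIdealDecomposition}, and $I = J'\cap K'$ via reality of $I$. However, the left/right bookkeeping you flagged is actually wrong in two places, and one of them cannot be fixed by relabeling.

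With the paper's convention, $K' = (I:J') = \set{a\in\A}{aJ'\subseteq I}$. So what you know is $K'\cdot J'\subseteq I$, with elements of $K'$ on the \emph{left}. Consequently $K\subseteq K'$ requires $K\cdot J'\subseteq I$. The definition $J'=(I:K)$ only gives $J'\cdot K\subseteq I$, so your justification of $K\subseteq K'$ fails. Swapping the roles of the one-sided quotients does not help: a right quotient $(I:Y)$ contains $K$ only if you know $KY\subseteq I$, and about $K$ you only know $JK\subseteq I$.

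The missing step is the $^*$\=/argument that is the heart of the paper's proof. Since $I$ and $J'$ are real, hence $^*$\=/stable, taking adjoints in $J'K\subseteq I$ gives $K^*J'\subseteq I$, i.e.\ $K^*\subseteq (I:J') = K'$. Then $^*$\=/stability of the real ideal $K'$ yields $K\subseteq K'$. So, contrary to your remark, reality beyond Lemma~\ref{lemma:realIdealDecomposition} is needed here as well, not only in the step $I=J'\cap K'$.

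In the final step there is a second slip. Taking $a^*\in J'$ and $a\in K'$ puts $a^*a$ into $J'\cdot K'$, which is not the product controlled by $K'=(I:J')$. Use instead $a^*\in K'$ (as $K'$ is $^*$\=/stable) and $a\in J'$. Then $a^*a\in K'\cdot J'\subseteq I$, and reality of $I$ gives $a\in I$.

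With these two repairs your variant is correct. It is even slightly more economical than the paper's choice $\overline K=(I:J^*)$, $\overline J=(I:\overline K)$, where the $^*$\=/trick is needed for both $J\subseteq\overline J$ and $K\subseteq\overline K$; in yours $J\subseteq J'$ is immediate.

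A minor point: in showing real ideals are $^*$\=/stable, $bb^*\in L$ holds simply because $L$ is a right ideal, so the detour through $b^*bb^*$ is unnecessary.
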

\begin{proof}
  Let $E$ be the set of all real ideals of $\A$ that are not the intersection of finitely many real prime ideals of $\A$
  (where the empty intersection qualifies as an intersection of finitely many real prime ideals).
  We want to show that $E$ is empty, so assume to the contrary that $E$ is nonempty.
  As $\A$ is Noetherian, it follows that there is a maximal (with respect to $\subseteq$) element $I$ of $E$.
  In particular, $I$ is a real proper ideal of $\A$ (and therefore also a $^*$\=/ideal), and $I$ is not prime,
  which means there are two ideals $J$ and $K$ of $\A$ such that $I \subsetneq J$, $I \subsetneq K$, and $J \cdot K \subseteq I$.
  Recall that $J^* = \set{a^*}{a\in J}$ is also an ideal and
  construct the ideal quotients $\overline{K} \coloneqq (I:J^*)$ and $\overline{J} \coloneqq (I:\overline{K})$,
  then clearly $\overline J \cdot \overline K = (I:\overline{K}) \cdot \overline K \subseteq I$.
  By Lemma~\ref{lemma:realIdealDecomposition}, both $\overline{J}$ and $\overline{K}$ are real ideals of $\A$,
  and in particular, $^*$\=/ideals. Now consider any $a \in \overline J \cap \overline K$, then
  $a^* a \in \overline J^* \cdot \overline K = \overline J \cdot \overline K \subseteq I$,
  and therefore $a\in I$ because $I$ is real. So $\overline J \cap \overline K \subseteq I$.
  Next we show that $I \subsetneq \overline J$ and $I \subsetneq \overline K$:
  If $j\in J$ and $k\in \overline K$, then $k^* \in \overline K^* = \overline K = (I:J^*)$, and therefore
  $jk = (k^* j^*)^* \in I^* = I$; this shows that $J \subseteq \overline{J}$, and therefore $I \subsetneq \overline J$.
  Similarly, if $k \in K$ and $j\in J$, then $k^* j^* = (jk)^* \in I^* = I$; this shows that $K^* \subseteq \overline{K}$,
  hence $K \subseteq \overline{K}^* = \overline K$, and therefore $I \subsetneq \overline K$.
  To summarize, both $\overline J$ and $\overline K$ are real ideals of $\A$ such that
  $I \subsetneq \overline J$, $I \subsetneq \overline K$, and $\overline J \cap \overline K \subseteq I$.
  This first of all implies that $\overline J \cap \overline K = I$ and that $\overline J \neq \A$ and $\overline K \neq \A$.
  By maximality of $I$ also $\overline J \notin E$ and $\overline K \notin E$ hold,
  and as we already know that both $\overline J$ and $\overline K$ are real ideals of $\A$,
  this means that both $\overline J$ and $\overline K$ are intersections of finitely many real prime ideals of $\A$.
  Therefore their intersection $I$ also is an intersection of finitely many real prime ideals of $\A$,
  in contradiction to $I \in E$.
\end{proof}

\begin{corollary} \label{corollary:realIdealDecomposition}
  Let $\lie g$ be a nilpotent finite-dimensional Lie $^*$\=/algebra and $(\A,\iota)$ an enveloping $^*$\=/algebra of $\lie g$.
  Then every real ideal of $\A$ is the intersection of finitely many real prime ideals of $\A$
  (possibly the empty intersection which by definition equals $\A$).
\end{corollary}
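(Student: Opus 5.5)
This follows by combining Theorem~\ref{theorem:realIdealDecomposition} with the Noetherianity recorded in Remark~\ref{remark:envelopingquotient}. The only thing to check is that $\A$ is a Noetherian $^*$\=/algebra; we do this first and then apply the theorem.

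For the Noetherian property, recall from Remark~\ref{remark:envelopingquotient} that the universal enveloping algebra $\Univ(\lie g)$ carries a $^*$\=/involution extending that of $\lie g$. By the universal property, $\iota$ extends to a $^*$\=/algebra morphism $\Phi\colon \Univ(\lie g)\to\A$. This morphism is surjective because $\A$ is generated by $\iota(\lie g)$. Hence $\A\cong\Univ(\lie g)/\ker(\Phi)$ as algebras. Since $\lie g$ is finite-dimensional, $\Univ(\lie g)$ is Noetherian by \cite[Cor.~7.4]{mcconnell.robson:noncommutativeNoetherianRings}. Quotients of Noetherian rings are Noetherian, because ideals of the quotient correspond to ideals containing the kernel. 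Therefore $\A$ is Noetherian. Nilpotency of $\lie g$ plays no role at this step; it is only part of the standing hypotheses.

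One point deserves a check: the notion of ``Noetherian'' used in the proof of Theorem~\ref{theorem:realIdealDecomposition}. That proof uses only the existence of a maximal element in any nonempty set of (two-sided) ideals. This is the ascending chain condition on two-sided ideals, and it is implied by left or right Noetherianity. So any of the usual meanings suffices, and the McConnell--Robson result provides it.

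With $\A$ known to be Noetherian, Theorem~\ref{theorem:realIdealDecomposition} applies directly. It shows that every real ideal of $\A$ is the intersection of finitely many real prime ideals, with the empty intersection equal to $\A$. There is no real obstacle here; the only care needed is the identification of the Noetherian hypothesis just described.
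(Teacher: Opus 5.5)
Your proposal is correct and is essentially the paper's argument. The paper likewise notes that $\A$ is Noetherian by Remark~\ref{remark:envelopingquotient}, as a quotient of $\Univ(\lie g)$, and then applies Theorem~\ref{theorem:realIdealDecomposition}. Your additional check that only the ascending chain condition on two-sided ideals is needed is accurate.
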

\begin{proof}
  By Remark~\ref{remark:envelopingquotient}, $\A$ is Noetherian, so Theorem~\ref{theorem:realIdealDecomposition} applies.
\end{proof}

Corollary~\ref{corollary:realIdealDecomposition} allows us to reduce the real Nullstellensatz to the case of prime enveloping
$^*$\=/algebras, which can be localized at nonzero central elements:

Consider a prime $^*$\=/algebra $\P$, i.e., $\P$ is a $^*$\=/algebra such that $\{0\}$ is a prime ideal of $\P$.
Then $\P$ does not have central zero divisors
(if $ac = 0$ with $a\in \P$ and $c\in \Zentrum(\P)$, then $abc = acb = 0$ for all $b\in \P$,
so $a=0$ or $c=0$ because $\{0\}$ is a prime ideal of $\P$). In particular,
$\Zentrum(\P) \setminus \{0\}$ is closed under multiplication.
The \emph{localization of $\P$ at $\Zentrum(\P) \setminus \{0\}$} is the $^*$\=/algebra $\P_{\totloc}$
of equivalence classes of $\P \times (\Zentrum(\P) \setminus \{0\})$
with respect to the equivalence relation $\sim$ that is defined as
\begin{equation}
  (a,d) \sim (a',d') \quad\quad\Longleftrightarrow\quad\quad ad' = a'd
  \quad\quad\text{for all }a\in \P \text{ and }d \in \Zentrum(\P) \setminus \{0\}.
\end{equation}
In order to check transitivity of $\sim$ one can use that $\P$ does not have central zero divisors.
We write $\frac{a}{d}$ for the $\sim$\,-equivalence class of $(a,d) \in \P \times (\Zentrum(\P) \setminus \{0\})$
and define addition, multiplication, and $^*$\=/involution of $\P_{\totloc}$ as usual:
\begin{equation}
  \frac{a}{d} + \frac{b}{e} \coloneqq \frac{ae+bd}{de},
  \quad\
  \frac{a}{d} \cdot \frac{b}{e} \coloneqq \frac{ab}{de},
  \quad\text{and}\quad
  \biggl(\frac{a}{d} \biggr)^* \coloneqq \frac{a^*}{d^*}
\end{equation}
for all $\frac{a}{d},\frac{b}{e} \in \P_{\totloc}$.
It is straightforward to check that these three operations are welldefined. Note that
two elements $a\in \P$, $d\in \Zentrum(\P) \setminus \{0\}$ fulfill $\frac{a}{d} = 0$ (i.e., $\frac{a}{d} = \frac01$)
if and only if $a = 0$. The unit of $\P_\totloc$ is of course the element $\frac{1}{1}$.

\begin{proposition} \label{proposition:localization}
  Let $\P$ be a prime $^*$\=/algebra.
  Then the center of its localization at $\Zentrum(\P) \setminus \{0\}$ is
  \begin{equation}
    \label{eq:localization:zentrum}
    \Zentrum(\P_{\totloc}) = \set[\bigg]{\frac{a}{d}}{a\in\Zentrum(\P), d \in \Zentrum(\P) \setminus \{0\} },
  \end{equation}
  which is a $^*$\=/field extension of $\CC$.
  If in addition $\P$ is formally real, then $\Zentrum(\P_\totloc)$ is formally real.
\end{proposition}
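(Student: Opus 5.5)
The plan is to prove the two inclusions in \eqref{eq:localization:zentrum} directly, then check the field axioms, and finally derive formal reality from that of $\P$.

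For the inclusion ``$\supseteq$'', take $a\in\Zentrum(\P)$ and $d\in\Zentrum(\P)\setminus\{0\}$. For any $\frac{b}{e}\in\P_\totloc$ we have
\[
\frac{a}{d}\cdot\frac{b}{e}=\frac{ab}{de}=\frac{ba}{ed}=\frac{b}{e}\cdot\frac{a}{d},
\]
because $a$ commutes with $b$ and all denominators are central. So $\frac{a}{d}$ is central.

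For the inclusion ``$\subseteq$'', let $\frac{a}{d}$ be central in $\P_\totloc$. For every $b\in\P$, the element $\frac{b}{1}$ commutes with $\frac{a}{d}$, which gives $\frac{ab}{d}=\frac{ba}{d}$. Since $\frac{c}{d}=0$ forces $c=0$, we get $ab-ba=0$, so $a\in\Zentrum(\P)$.

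Next I show that $\Zentrum(\P_\totloc)$ is a $^*$-field extension of $\CC$. It is a commutative $^*$-subalgebra, and the map $\lambda\mapsto\frac{\lambda}{1}$ embeds $\CC$ into it. We have $0\neq 1$: otherwise $\frac{1}{1}=\frac{0}{1}$ would give $1=0$ in $\P$, but $\{0\}$ is prime and hence proper. If $\frac{a}{d}\neq0$ with $a$ central, then $a\neq0$, so $a\in\Zentrum(\P)\setminus\{0\}$. Hence $\frac{d}{a}$ is a well-defined element of the center, and $\frac{a}{d}\cdot\frac{d}{a}=\frac{ad}{da}=\frac{1}{1}$.

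For formal reality, suppose $\P$ is formally real and $\sum_{j=1}^k \bigl(\frac{a_j}{d_j}\bigr)^*\frac{a_j}{d_j}=0$ with $a_j\in\Zentrum(\P)$. I clear denominators by setting $D\coloneqq\prod_{l}d_l\in\Zentrum(\P)\setminus\{0\}$ and $c_j\coloneqq a_j\prod_{l\neq j}d_l$, so that $\frac{a_j}{d_j}=\frac{c_j}{D}$. The sum then equals $\frac{\sum_j c_j^*c_j}{D^*D}$. Here $D^*D\neq0$: by formal reality of $\P$, $D^*D=0$ would force $D=0$. Since the sum is zero, $\sum_j c_j^*c_j=0$ in $\P$. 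Formal reality of $\P$ then gives $c_j=0$ for every $j$, hence $\frac{a_j}{d_j}=0$.

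The main obstacle is mostly bookkeeping: checking that products of nonzero central elements stay nonzero, so that the common denominator $D$ and $D^*D$ lie in $\Zentrum(\P)\setminus\{0\}$. This follows from the absence of central zero divisors, noted before the statement, together with the fact that $d^*$ is central and nonzero whenever $d$ is.
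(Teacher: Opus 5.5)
Your proposal is correct and follows essentially the same route as the paper: you check both inclusions directly (using that $\frac{c}{d}=0$ forces $c=0$ for the nontrivial one), give inverses as $\frac{d}{a}$, and get formal reality by clearing denominators and applying formal reality of $\P$. The only difference is cosmetic: you conclude $\frac{a_j}{d_j}=\frac{c_j}{D}=0$ directly, whereas the paper, from $b_j=0$, cancels the factors $d_i$ using the absence of central zero divisors.
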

\begin{proof}
  It is clear that $\frac{a}{d} \in \Zentrum(\P_{\totloc})$ for all $a\in\Zentrum(\P)$, $d \in \Zentrum(\P) \setminus \{0\}$.
  Conversely, assume that $a\in \P$, $d\in \Zentrum(\P) \setminus \{0\}$ fulfill $\frac{a}{d} \in \Zentrum(\P_{\totloc})$.
  Then for all $b\in \P$ the identity $\frac{ab-ba}{d} = \frac{a}{d}\frac{b}{1} - \frac{b}{1} \frac{a}{d} = 0$ holds,
  and therefore $ab-ba = 0$. This shows that $a\in\Zentrum(\P)$, which then completes the proof of \eqref{eq:localization:zentrum}.
  It follows from \eqref{eq:localization:zentrum} that every nonzero element $\frac ad \in \Zentrum(\P_{\totloc})$ has a
  multiplicative inverse $\frac da\in \Zentrum(\P_{\totloc})$, and clearly $\frac{0}{1} \neq \frac{1}{1}$ because
  $\P$ is prime and therefore $\P \neq \{0\}$ (recall that the prime ideal $\{0\}$ of $\P$ is a proper ideal by definition).

  Now assume that $\P$ is formally real and let $k\in \NN$ and $a_1,\dots,a_k \in \Zentrum(\P)$, $d_1,\dots, d_k \in \Zentrum(\P) \setminus \{0\}$
  such that $\sum_{j=1}^k \bigl(\frac{a_j}{d_j}\bigr)^{\!*\,} \frac{a_j}{d_j} = 0$. By multiplication with $\prod_{j=1}^k d_j^* d_j$
  it follows that $\sum_{j=1}^k b_j^* b_j = 0$ where
  $b_j \coloneqq a_j \prod_{i \in \{1,\dots,k\}\setminus\{j\}} d_i \in \Zentrum(\P)$,
  so $b_1 = \dots = b_k = 0$ because $\P$ is formally real. As $\P$ does not have central zero divisors
  and $d_1,\dots,d_k \in \Zentrum(\P) \setminus \{0\}$, it follows that
  $a_1 = \dots = a_k = 0$. This shows that $\Zentrum(\P_{\totloc})$ is formally real.
\end{proof}
Consider a prime $^*$\=/algebra $\P$ and $z \in \Zentrum(\P)_\hermitian \setminus \{0\}$.
Then it is easy to check that
\begin{equation}
  \P_{z\mloc} \coloneqq \set[\bigg]{\frac{a}{z^k}}{a \in \P, k \in \NN_0}
\end{equation}
is a $^*$\=/subalgebra of $\P_\totloc$.

\begin{proposition} \label{proposition:zlocalization}
  Let $\P$ be a prime $^*$\=/algebra and let $k\in \NN_0$ and $f_1,\dots, f_k \in \P_\totloc$.
  Then there are $z \in \Zentrum(\P)_\hermitian \setminus \{0\}$ and $a_1,\dots,a_k \in \P$ such that
  \begin{equation}
    f_j = \frac{a_j}{z} \in \P_{z\mloc}
    \quad\quad\text{for all }j\in\{1,\dots,k\}
    .
  \end{equation}
\end{proposition}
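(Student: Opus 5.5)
The plan is to find a common denominator that is hermitian and nonzero. First write each $f_j$ as $f_j = \frac{b_j}{d_j}$ with $b_j \in \P$ and $d_j \in \Zentrum(\P)\setminus\{0\}$, which is possible by the definition of $\P_\totloc$. If $k=0$ there is nothing to show except finding some admissible $z$, and $z \coloneqq 1$ works, since $1 = 1^*$ is central and nonzero because $\P \neq \{0\}$.

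For $k \ge 1$ define
\begin{equation*}
  z \coloneqq \prod_{i=1}^k d_i^* d_i .
\end{equation*}
The $d_i$ lie in the commutative $^*$\=/subalgebra $\Zentrum(\P)$, so $d_i^*$ is central and the product does not depend on the order of the factors. It follows that $z^* = z$, so $z \in \Zentrum(\P)_\hermitian$. Next, $z \neq 0$: each $d_i^*$ is nonzero, since $^*$ is an involution, and $\P$ has no central zero divisors (shown before Proposition~\ref{proposition:localization}). Hence a product of nonzero central elements stays nonzero, by induction on the number of factors.

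Now set
\begin{equation*}
  a_j \coloneqq b_j\, d_j^* \prod_{i \neq j} d_i^* d_i \in \P .
\end{equation*}
Then $a_j d_j = b_j z$, because all the $d$-factors are central. By the definition of $\sim$ this means $\frac{a_j}{z} = \frac{b_j}{d_j} = f_j$. Since $\frac{a_j}{z} = \frac{a_j}{z^1}$, it also lies in $\P_{z\mloc}$, which is what is required.

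The only step needing care is the nonvanishing and hermiticity of $z$. Hermiticity is why the denominator uses $d_i^* d_i$ rather than simply $d_i$. Nonvanishing follows from primeness via the absence of central zero divisors. Everything else is routine bookkeeping with the equivalence relation.
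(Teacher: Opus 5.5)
Your proof is correct and follows essentially the same approach as the paper. The paper also takes the common hermitian denominator $z = \prod_{j} d_j^* d_j$ and the numerators $a_j = d_j^* b_j \prod_{i\neq j} d_i^* d_i$. Your separate treatment of $k=0$ is just the empty-product case of the same formula.
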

\begin{proof}
  There are $b_j \in \P$ and $d_j \in \Zentrum(\P)\setminus \{0\}$ such that $f_j = \frac{b_j}{d_j}$
  for all $j\in \{1,\dots,k\}$. Set $z \coloneqq \prod_{j=1}^k d_j^*d_j$, then $z \in \Zentrum(\P)_\hermitian\setminus \{0\}$
  because $\P$ does not have central zero divisors. For $j\in\{1,\dots,k\}$, set
  $a_j \coloneqq d_j^* b_j \prod_{i\in\{1,\dots,k\}\setminus\{j\}} d_i^*d_i$, then
  \smash{$\frac{a_j}{z} = \frac{b_j}{d_j} = f_j$}.
\end{proof}

\section{Proof of the main theorem} \label{sec:proof}
We can now combine the results of Sections~\ref{sec:weyl}, \ref{sec:centralEnv}, and \ref{sec:realprime}
and prove the real Nullstellensatz for finite-dimensional nilpotent Lie $^*$\=/algebras.
The following Lemma~\ref{lemma:mainLemma} contains the main technical steps of the proof:

\begin{lemma} \label{lemma:mainLemma}
  Let $\lie g$ be a nilpotent finite-dimensional Lie $^*$\=/algebra, $(\P,\kappa)$ an enveloping $^*$\=/algebra of $\lie g$,
  and $\hat a \in \P$. Assume that the $^*$\=/algebra $\P$ is prime and formally real and that $\hat a \neq 0$.
  Then there are $r\in \NN_0$ and a surjective $^*$\=/algebra morphism
  $\Phi \colon \P \to \Weyl{r}$ such that $\Phi(\hat a) \neq 0$ and $\Phi\bigl( \kappa(\lie g) \bigr) \subseteq \WeylNil_r$.
\end{lemma}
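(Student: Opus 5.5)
Localize $\P$ at its center, identify the result with $\KK\otimes\Weyl{r}$ via Theorem~\ref{theorem:weyl}, and then use a $^*$\=/character of a finitely generated piece of $\KK$ to cut down to $\Weyl{r}$.

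\textbf{Step 1: localization.} Form $\P_\totloc$. By Proposition~\ref{proposition:localization}, $\KK\coloneqq\Zentrum(\P_\totloc)$ is a formally real $^*$\=/field extension of $\CC$. The canonical map $\P\to\P_\totloc$, $a\mapsto\frac a1$, is an injective $^*$\=/algebra morphism. Its image generates $\P_\totloc$ over $\KK$, so $\P_\totloc$ together with $\frac{1}{1}\kappa(\argument)$, extended $\KK$\=/linearly to the Lie $^*$\=/algebra $\KK\otimes\lie g$ over $\KK$, is a $\KK$\=/central enveloping $^*$\=/algebra. This Lie $^*$\=/algebra is nilpotent and finite-dimensional over $\KK$.

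\textbf{Step 2: structure theorem.} Theorem~\ref{theorem:weyl} gives $r\in\NN_0$ and a $\KK$\=/linear $^*$\=/isomorphism $\Lambda\colon\P_\totloc\to\KK\otimes\Weyl{r}$ with $\Lambda(\kappa(\lie g)/1)\subseteq\KK\otimes\WeylNil_r$.

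\textbf{Step 3: a finitely generated commutative piece.} Let $\F\coloneqq\Lambda(\P/1)$ and let $\F'$ be the $^*$\=/subalgebra generated by $\F$ and $1\otimes\Weyl{r}$. Since $\P$ is generated by the finite-dimensional space $\kappa(\lie g)$, $\F'$ is finitely generated; we add the preimages of the finitely many elements $1\otimes p_j$, $1\otimes q_j$. Each of these preimages lies in $\P_\totloc$ with some central denominator. By Proposition~\ref{proposition:zlocalization} we can choose one hermitian central $z\neq0$ with $\Lambda^{-1}(1\otimes p_j),\Lambda^{-1}(1\otimes q_j)\in\P_{z\mloc}$. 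We then set $\F'\coloneqq\Lambda(\P_{z\mloc})$, which is finitely generated by $\Lambda(\kappa(\lie g)/1)$, $\Lambda(1/z)$ and contains $1\otimes\Weyl{r}$.

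Proposition~\ref{proposition:WeylTotallySimple}, part~\ref{item:WeylTotallySimple:fingenSubalgebra}, gives $\F'=\KK_\fin\otimes\Weyl{r}$ with $\KK_\fin\subseteq\KK$ a finitely generated commutative $^*$\=/subalgebra. It is formally real as a subalgebra of $\KK$. Write $\Lambda(\hat a/1)=\sum_{k,\ell}c_{k,\ell}\otimes p^kq^\ell$ in multi-index notation, with $c_{k,\ell}\in\KK_\fin$ not all zero, since $\hat a\ne0$ and $\Lambda$ is injective. Also $\Lambda(z)=z'\otimes1$ with $z'\in\KK_\fin$ nonzero. By Theorem~\ref{theorem:NullstellensatzCommutative}, applied to the nonzero element $z'c_{k,\ell}$ (or simply to $c_{k,\ell}$, together with the invertibility of $z'$ in $\KK_\fin$ since $1/z\in\P_{z\mloc}$), we get a $^*$\=/character $\varphi$ of $\KK_\fin$ with $\varphi(c_{k,\ell})\ne0$ for some $(k,\ell)$.

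\textbf{Step 4: definition of $\Phi$.} Set $\Phi\coloneqq(\varphi\otimes\id_{\Weyl{r}})\circ\Lambda\circ(a\mapsto\frac a1)$. This is a $^*$\=/morphism $\P\to\Weyl{r}$. It satisfies $\Phi(\hat a)\ne0$ because $\{p^kq^\ell\}$ is a basis of $\Weyl{r}$.

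To see $\Phi(\kappa(\lie g))\subseteq\WeylNil_r$, note that $\Lambda(\kappa(\lie g)/1)\subseteq\F'\cap(\KK\otimes\WeylNil_r)=\KK_\fin\otimes\WeylNil_r$. This uses the fact that $\WeylNil_r$ is spanned by basis monomials, so the coefficient condition can be checked monomialwise, and $\varphi\otimes\id$ maps $\KK_\fin\otimes\WeylNil_r$ into $\WeylNil_r$.

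\textbf{Main obstacle: surjectivity.} $\Phi$ is surjective onto the image of $\P$, but $\Weyl{r}$'s generators $1\otimes p_j$ come from $\P_{z\mloc}$, not from $\P$. The natural remedy is to choose $\varphi$ with $\varphi(z')\ne0$ and extend $\varphi$ to $\P_{z\mloc}$; indeed $z'$ is invertible in $\KK_\fin$, so $\varphi(z')\ne0$ automatically.

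Then $\Phi$ extends to $\P_{z\mloc}$ and hits all of $\Weyl{r}$. For $\Phi$ on $\P$ itself, note that $\P_{z\mloc}$ is generated by $\P$ and $1/z$, which is central. Its image $\varphi(1/z')$ is a nonzero scalar, so $\Phi(\P)$ equals the image of $\P_{z\mloc}$, namely all of $\Weyl{r}$. Making this bookkeeping with denominators precise is where care is needed.
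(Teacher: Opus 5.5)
Your proposal is correct and follows the paper's proof essentially step for step. You localize at the center, apply Theorem~\ref{theorem:weyl}, choose a single hermitian central denominator $z$ via Proposition~\ref{proposition:zlocalization} so that $1\otimes\Weyl{r}\subseteq\Lambda(\P_{z\mloc})=\KK_\fin\otimes\Weyl{r}$, take a $^*$\=/character of $\KK_\fin$ from Theorem~\ref{theorem:NullstellensatzCommutative}, and get surjectivity from $\varphi(\frac{z}{1})\neq0$. The deviations are only cosmetic: you detect $\Phi(\hat a)\neq0$ through a nonzero coefficient of $\Lambda(\frac{\hat a}{1})$ in the monomial basis $p^kq^\ell$ rather than via part~\ref{item:WeylTotallySimple:ideals} of Proposition~\ref{proposition:WeylTotallySimple}, and you obtain $\Lambda(\frac{\kappa(\lie g)}{1})\subseteq\KK_\fin\otimes\WeylNil_r$ from $(\KK_\fin\otimes\Weyl{r})\cap(\KK\otimes\WeylNil_r)=\KK_\fin\otimes\WeylNil_r$ rather than by also putting the coefficients $f_{m,n}$ over the denominator $z$.
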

\begin{proof}
  The following commutative diagram gives an overview over the most important $^*$\=/algebra morphisms that will occur in the
  construction of $\Phi$ (hooked arrows ``$\hookrightarrow$'' denote the canonical inclusion of a $^*$\=/subalgebra):
  \begin{equation*}
    \begin{tikzcd}[sep = large, column sep=huge]
      \P \arrow[r, "a\mapsto \frac{a}{1}"] \arrow[d,"\Phi"]
      & \P_{z\mloc} \arrow[r, hook] \arrow[d, "\Lambda\at{\P_{z\mloc}}"] & \P_{\totloc}  \arrow[d,"\Lambda", swap] \\
      \Weyl{r} & \KK_\fin \otimes \Weyl{r} \arrow[l, "\widehat\varphi", swap] \arrow[r, hook] & \KK \otimes \Weyl{r}
    \end{tikzcd}
  \end{equation*}
  Proposition~\ref{proposition:localization} shows that the center $\Zentrum(\P_\totloc)$ of the localization $\P_\totloc$
  of $\P$ is a formally real $^*$\=/field extension of $\CC$;
  we write $\KK \coloneqq \Zentrum(\P_\totloc)$. So $\P_\totloc$ is $\KK$\=/central.
  We then define the $\CC$\=/linear map $\iota \colon \KK \otimes \lie g \to \P_\totloc$,
  \begin{equation*}
    c \otimes x \mapsto \iota(c\otimes x) \coloneqq c\,\frac{\kappa(x)}{1}
    .
  \end{equation*}
  Clearly, $\iota$ is a $\KK$-linear Lie $^*$\=/algebra morphism
  and $\iota(\KK\otimes \lie g)$ generates $\P_\totloc$ as a $^*$\=/algebra, so $(\P_\totloc,\iota)$ is a $\KK$-central
  enveloping $^*$\=/algebra of $\KK\otimes \lie g$. By Theorem~\ref{theorem:weyl}, there are $r\in \NN_0$
  and a $\KK$\=/linear $^*$\=/algebra morphism $\Lambda \colon \P_\totloc \to \KK \otimes \Weyl{r}$
  such that $\Lambda$ is an isomorphism and $\Lambda\bigl(\iota(\KK\otimes \lie g)\bigr) \subseteq \KK \otimes \WeylNil_r$.
  In particular, $\Lambda\bigl(\frac{\kappa(x)}{1} \bigr) = \Lambda\bigl(\iota(1\otimes x)\bigr) \in \KK\otimes \WeylNil_r$
  for all $x\in\lie g$.

  Like in Section~\ref{sec:weyl}, we write $p_1,\dots,p_r,q_1,\dots,q_r$ for the canoncial generators of $\Weyl{r}$.
  Moreover, let $d \coloneqq \dim_\CC(\lie g)$ and let $b_1,\dots,b_d$ be a basis of $\lie g$.
  Then we can expand $\Lambda\bigl( \frac{\kappa(b_m)}{1} \bigr) \in \KK \otimes \WeylNil_r$ as
  \begin{equation*}
    \Lambda\biggl( \frac{\kappa(b_m)}{1} \biggr) = \sum_{n=1}^\ell f_{m,n} \otimes w_{m,n}
    \quad\quad\text{for all }m\in\{1,\dots,d\}
  \end{equation*}
  with suitable $\ell \in \NN$ and $f_{m,n} \in \KK$, $w_{m,n} \in \WeylNil_r$ for $m \in \{1,\dots,d\}$, $n\in \{1,\dots,\ell\}$.
  By Proposition~\ref{proposition:zlocalization}, there are $z \in \Zentrum(\P)_\hermitian \setminus \{0\}$ and
  $p_1', \dots, p_r', q_1', \dots, q_r' \in \P$ and $g_{m,n} \in \P$ for $m \in \{1,\dots,d\}$, $n\in \{1,\dots,\ell\}$ such that
  \begin{equation*}
    \tag{\theoremMainInternalTag}
    \label{eq:internal:pqPrimeDef}
    \Lambda^{-1}(1\otimes p_j) = \frac{p'_j}{z} \in \P_{z\mloc}
    \quad\quad\text{and}\quad\quad
    \Lambda^{-1}(1\otimes q_j) = \frac{q'_j}{z} \in \P_{z\mloc}
    \quad\quad\text{for all $j\in\{1,\dots,r\}$}
  \end{equation*}
  and
  \begin{equation*}
    f_{m,n} = \frac{g_{m,n}}{z} \in \P_{z\mloc}
    \quad\quad\text{for all $m \in \{1,\dots,d\}$ and $n\in \{1,\dots,\ell\}$.}
  \end{equation*}
  The $^*$\=/subalgebra $\P_{z\mloc}$ of $\P_\totloc$ is generated by
  $\frac{\kappa(b_1)}{1}, \dots, \frac{\kappa(b_d)}{1}, \frac{1}{z}$.
  Therefore, $\Lambda( \P_{z\mloc} )$ is a finitely generated $^*$\=/subalgebra of $\KK \otimes \Weyl{r}$.
  By construction of $z$ we have $1\otimes p_j = \Lambda\bigl(\frac{p'_j}{z}\bigr) \in \Lambda(\P_{z\mloc})$
  and $1\otimes q_j = \Lambda\bigl(\frac{q'_j}{z}\bigr) \in \Lambda(\P_{z\mloc})$
  for all $j\in\{1,\dots,r\}$, so $1 \otimes \Weyl{r} \subseteq \Lambda(\P_{z\mloc})$.
  Part~\ref{item:WeylTotallySimple:fingenSubalgebra} of Proposition~\ref{proposition:WeylTotallySimple} now shows that
  $\KK_\fin \coloneqq \set[\big]{c\in \KK}{c \otimes 1 \in \Lambda( \P_{z\mloc} )}$
  is a finitely generated $^*$\=/subalgebra of $\KK$ and that $\Lambda( \P_{z\mloc} ) = \KK_\fin \otimes \Weyl{r}$.
  As $\Lambda$ is a $\KK$\=/linear $^*$\=/algebra morphism,
  \begin{equation*}
    \Lambda(c)
    =
    \Lambda(c\cdot \mathbf{1}_{\P_\totloc})
    =
    c\,\Lambda(\mathbf{1}_{\P_\totloc})
    =
    c\,\bigl(\mathbf{1}_{\KK} \otimes \mathbf{1}_{\Weyl{r}})
    =
    c \otimes \mathbf{1}_{\Weyl{r}}
    \quad\quad\text{for all }c\in \Zentrum(\P_\totloc) = \KK,
  \end{equation*}
  where $\mathbf{1}_{\P_\totloc}$, $\mathbf{1}_{\KK}$, and $\mathbf{1}_{\Weyl{r}}$ are the units of the $^*$\=/algebras
  $\P_\totloc$, $\KK$, and $\Weyl{r}$, respectively. As $\Lambda$ is an isomorphism, it follows that an element
  $c\in \KK$ fulfills $c \otimes 1 \in \Lambda( \P_{z\mloc} )$ if and only if $c\in \P_{z\mloc}$. Therefore $\KK_\fin = \KK \cap \P_{z\mloc}$.
  In particular, $f_{m,n} \in \KK_\fin$ for all $m \in \{1,\dots,d\}$ and $n\in \{1,\dots,\ell\}$, so
  \begin{equation*}
    \Lambda\biggl( \frac{\kappa(b_m)}{1} \biggr) = \sum_{n=1}^\ell f_{m,n} \otimes w_{m,n} \in \KK_\fin \otimes \WeylNil_r
    \quad\quad\text{for all $m\in\{1,\dots,d\}$,}
  \end{equation*}
  and consequently $\Lambda\bigl( \frac{\kappa(x)}{1} \bigr) \in \KK_\fin \otimes \WeylNil_r$ for all $x\in\lie g$.

  As $\KK$ is formally real,
  its finitely generated $^*$\=/subalgebra $\KK_\fin$ is also formally real. The commutative real Nullstellensatz
  in the form of Theorem~\ref{theorem:NullstellensatzCommutative} now shows that
  $\bigcap_{\varphi \in \starchar(\KK_\fin)} \ker(\varphi) = \{0\}$, where $\starchar(\KK_\fin)$ is the set of $^*$\=/algebra morphisms
  from $\KK_\fin$ to $\CC$. For every such $\varphi \in \starchar(\KK_\fin)$,
  we define the $\CC$\=/linear map $\widehat\varphi \colon \KK_\fin \otimes \Weyl{r} \to \Weyl{r}$,
  \begin{equation*}
    c \otimes a \mapsto \widehat\varphi(c\otimes a) \coloneqq \varphi(c)\, a
    .
  \end{equation*}
  Then $\widehat\varphi$ is a $^*$\=/algebra morphism because $\varphi(c)\,a = a \,\varphi(c)$ for all $c\in \KK_\fin$, $a\in \Weyl{r}$.

  Consider the $^*$\=/ideal $I \coloneqq \bigcap_{\varphi\in\starchar(\KK_\fin)} \ker(\widehat\varphi)$ of $\KK_\fin \otimes \Weyl{r}$.
  By part~\ref{item:WeylTotallySimple:ideals} of Proposition~\ref{proposition:WeylTotallySimple},
  there is an ideal $I_{\KK_\fin}$ of $\KK_\fin$ such that $I = I_{\KK_\fin} \otimes \Weyl{r}$.
  However, for all $c\in I_{\KK_\fin}$ and all $\varphi\in\starchar(\KK_\fin)$,
  the identity $\varphi(c) = \widehat\varphi(c\otimes 1) = 0$ holds, and therefore
  $I_{\KK_\fin} \subseteq \bigcap_{\varphi \in \starchar(\KK_\fin)} \ker(\varphi) = \{0\}$. Con\-sequently
  $\bigcap_{\varphi\in\starchar(\KK_\fin)} \ker(\widehat\varphi) = I = I_{\KK_\fin} \otimes \Weyl{r} = \{0\}$.
  Note that $\Lambda\bigl( \frac{\hat a}{1} \bigr) \neq 0$ because $\Lambda$ is an isomorphism and $\hat a\neq0$.
  Therefore, there is $\varphi \in \starchar(\KK_\fin)$ such that $\widehat\varphi\bigl( \Lambda\bigl( \frac{\hat a}{1} \bigr) \bigr) \neq 0$.
  We choose one such $^*$\=/character $\varphi$ and use it to define the $^*$\=/algebra morphism
  $\Phi \colon \P \to \Weyl{r}$,
  \begin{equation*}
    a \mapsto \Phi(a) \coloneqq \widehat\varphi\biggl( \Lambda\biggl( \frac{a}{1} \biggr)\biggr)
    .
  \end{equation*}
  This way it is guaranteed that $\Phi(\hat a) \neq 0$.
  From $\Lambda\bigl( \frac{\kappa(x)}{1} \bigr) \in \KK_\fin \otimes \WeylNil_r$ for all $x\in\lie g$ it follows that also
  $\Phi\bigl(\kappa(x)\bigr) \in \widehat\varphi\bigl( \KK_\fin \otimes \WeylNil_r \bigr) \subseteq \WeylNil_r$.

  It only remains to check that $\Phi$ is surjective:
  For all $j\in \{1,\dots,r\}$, the identities
  \begin{equation*}
    \Lambda\biggl(\frac{p'_j}{1}\biggr) = \frac{z}{1}\,\Lambda\biggl(\frac{p'_j}{z}\biggr) = \frac{z}{1} \otimes p_j
    \quad\quad\text{and}\quad\quad
    \Lambda\biggl(\frac{q'_j}{1}\biggr) = \frac{z}{1}\,\Lambda\biggl(\frac{q'_j}{z}\biggr) = \frac{z}{1} \otimes q_j
  \end{equation*}
  hold by $\KK$\=/linearity of $\Lambda$, with $p_1', \dots, p_r', q_1', \dots, q_r' \in \P$ from \eqref{eq:internal:pqPrimeDef}.
  Note that $\frac{z}{1},\frac{1}{z} \in \KK \cap \P_{z\mloc} = \KK_\fin$,
  and $\varphi\bigl(\frac{z}{1}\bigr)\, \varphi\bigl(\frac{1}{z}\bigr) = \varphi\bigl(\frac{1}{1}\bigr) = 1$.
  Therefore $\varphi\bigl(\frac{z}{1}\bigr) \in \CC \setminus \{0\}$, and
  \begin{equation*}
    \Phi(p'_j) = \widehat\varphi\biggl(\Lambda\biggl( \frac{p'_j}{1} \biggr) \biggr) = \varphi\biggl( \frac{z}{1} \biggr)\,p_j
    \quad\quad\text{and}\quad\quad
    \Phi(q'_j) = \widehat\varphi\biggl(\Lambda\biggl( \frac{q'_j}{1} \biggr) \biggr) = \varphi\biggl( \frac{z}{1} \biggr)\,q_j
    .
  \end{equation*}
  It follows that the image of $\Phi$ contains all $p_j,q_j$ for $j\in\{1,\dots,r\}$, and therefore equals whole $\Weyl{r}$.
\end{proof}

The proof of our main Theorem~\ref{theorem:main} is a straightforward combination of the above Lemma~\ref{lemma:mainLemma}
with some other previous results:

\begin{proof}[of Theorem~\ref{theorem:main}]
  Let $\lie g$ be a nilpotent finite-dimensional Lie $^*$\=/algebra, $(\A,\iota)$ an enveloping $^*$\=/algebra of $\lie g$,
  and $I$ a real ideal of $\A$.
  We have to show that there is a set $S$ of irreducible integrable $^*$\=/representations of
  $(\A,\iota)$ such that $I = \bigcap_{(\Dom,\pi) \in S} \ker(\pi)$.
  If $I=\A$, then we can choose $S\coloneqq \emptyset$.
  So consider the other case, $I\neq \A$, and let $\hat a \in \A \setminus I$.
  The proof is complete once we have constructed an irreducible integrable $^*$\=/representation
  $( \Dom, \pi )$ of $(\A,\iota)$ such that $I \subseteq \ker(\pi)$ and $\hat a \notin \ker(\pi)$.

  By Corollary~\ref{corollary:realIdealDecomposition}, $I$ is the intersection of finitely many real prime ideals of $\A$.
  In particular, there is a real prime ideal $J$ of $\A$ such that $I \subseteq J$ and $\hat a \notin J$.
  As $J$ automatically is a $^*$\=/ideal, the quotient $\P \coloneqq \A / J$ is a prime $^*$\=/algebra.
  We write $[\argument] \colon \A \to \P$ for the canonical projection onto the quotient (which is a $^*$\=/algebra morphism).
  Clearly $[\hat a] \neq [0]$ because $\hat a \notin J$.

  Define $\kappa \colon \lie g \to \P$, $x \mapsto \kappa(x) \coloneqq [\iota(x)]$.
  Then $\kappa$ is a Lie $^*$\=/algebra morphism and $\P$ is generated as a $^*$\=/algebra by $\kappa(\lie g) = [\iota(\lie g)]$,
  i.e., $(\P,\kappa)$ is an enveloping $^*$\=/algebra of $\lie g$.
  By Lemma~\ref{lemma:mainLemma}, there are $r\in \NN_0$ and a surjective $^*$\=/algebra morphism
  $\Phi \colon \P \to \Weyl{r}$ such that $[\hat a] \notin \ker(\Phi)$
  and $\Phi\bigl( \kappa(\lie g)\bigr) \subseteq \WeylNil_r$.

  Let $\bigl( \Schwartz(\RR^r), \pi_\cat\bigr)$ be the Schrödinger $^*$\=/representation of $\Weyl{r}$ and define
  the $^*$\=/algebra morphism $\pi_\Phi \colon \A \to \Adbar\bigl(\Schwartz(\RR^r)\bigr)$,
  \begin{equation*}
    a \mapsto \pi_\Phi(a) \coloneqq \pi_\cat\bigl( \Phi([a]) \bigr),
  \end{equation*}
  then $\bigl( \Schwartz(\RR^r), \pi_\Phi\bigr)$ is a $^*$\=/representation of $\A$
  and $I \subseteq \ker(\pi_\Phi)$ because $I \subseteq J = \ker([\argument])$.
  As $[\hat a] \notin \ker(\Phi)$ and as $\pi_\cat \colon \Weyl{r} \to \Adbar\bigl(\Schwartz(\RR^r)\bigr)$
  is injective by part~\ref{item:Schroedinger:faithful} of Proposition~\ref{proposition:Schroedinger},
  $\hat a \notin \ker(\pi_\Phi)$.

  It only remains to show integrability and irreducibility of $\bigl( \Schwartz(\RR^r), \pi_\Phi\bigr)$.
  Let $\iota' \coloneqq \Phi \circ \kappa \colon \lie g \to \Weyl{r}$,
  then $\Weyl{r}$ is generated as a $^*$\=/algebra by $\iota'(\lie g)$ because $\P$ is generated by $\kappa(\lie g)$
  and because $\Phi \colon \P \to \Weyl{r}$ is surjective. So $(\Weyl{r},\iota')$ is also an enveloping $^*$\=/algebra of $\lie g$.
  Moreover, $\iota'(\lie g) = \Phi\bigl( \kappa(\lie g) \bigr) \subseteq \WeylNil_r$ by construction of $\Phi$,
  so $\bigl( \Schwartz(\RR^r), \pi_\cat\bigr)$ is an irreducible integrable $^*$\=/representation of $(\Weyl{r},\iota')$
  by Theorem~\ref{theorem:integrable}.
  Finally, note that $\pi_\cat \circ \iota' = \pi_\cat \circ \Phi \circ \kappa = \pi_\Phi \circ \iota$ because $\kappa = [\argument] \circ \iota$.
  Therefore, Proposition~\ref{proposition:irredintstability} applies to the $^*$\=/representations
  $\bigl( \Schwartz(\RR^r), \pi_\cat\bigr)$ and $\bigl( \Schwartz(\RR^r), \pi_\Phi\bigr)$
  of the enveloping $^*$\=/algebras $(\Weyl{r},\iota')$ and $( \A, \iota )$, respectively,
  and shows that $\bigl( \Schwartz(\RR^r), \pi_\Phi\bigr)$ is an irreducible integrable $^*$\=/representation of
  $( \A, \iota )$.
\end{proof}

\begin{remark}
  Let us revisit the above proof of Theorem~\ref{theorem:main}:
  For any $\hat a \in \A \setminus I$ we construct, by passing to a prime quotient and then applying $\Phi$,
  a surjective $^*$\=/algebra morphism $\Phi' \coloneqq \Phi \circ [\argument] \colon \A \to \Weyl{r}$
  such that $I \subseteq \ker(\Phi')$ and $\hat a \notin \ker(\Phi')$. This shows that $I$ is the intersection of
  kernels of surjective $^*$\=/algebra morphisms into some Weyl $^*$\=/algebra $\Weyl{r}$ (with $r\in \NN_0$ not fixed).
  As the Weyl $^*$\=/algebra is simple (see, e.g., Proposition~\ref{proposition:WeylTotallySimple}) and formally real
  (it admits a faithful $^*$\=/representation, e.g., the Schrödinger $^*$\=/representation),
  the kernel of any such surjective $^*$\=/algebra morphism $\A \to \Weyl{r}$ is a real maximal ideal of $\A$.
  This already shows that $I$ is the intersection of all real maximal ideals that contain $I$, and in particular,
  of all real primitive ideals that contain $I$.
  In the second step we compose with the Schrödinger $^*$\=/representation and show that the resulting
  $^*$\=/representation of $\A$ is integrable and irreducible by applying Theorem~\ref{theorem:integrable}.
  Alternatively, one could also apply classification results for representations of nilpotent Lie algebras
  to show that every real primitive ideal of $\A$ (and even every primitive $^*$\=/ideal of $\A$) is the kernel of an
  irreducible integrable $^*$\=/representation; this follows from
  \cite[Lemma~27 and Thm.~12]{dixmier:representationsIrreductiblesDesAlgebresDeLieNilpotentes}.
  However, the technical details involved are cumbersome.
\end{remark}

\section{The real universal enveloping algebra} \label{sec:realuniv}
In this last section we consider the \emph{real} universal enveloping algebra $\Univ(\lie g)$
of a \emph{real} Lie algebra $\lie g$, with the canonical Lie algebra morphism
$\iotaUniv{\lie g} \colon \lie g \to \Univ(\lie g)$.
Recall that $\Univ(\lie g)$ is a Hopf algebra, in particular, its \emph{antipode} or \emph{principal antiautomorphism}
$S \colon \Univ(\lie g) \to \Univ(\lie g)$
is the algebra antihomomorphism (linear map such that $S(1) = 1$ and $S(ab) = S(b)\,S(a)$ for all $a,b\in \Univ(\lie g)$)
that is uniquely determined by $S(\iotaUniv{\lie g}(x)) \coloneqq -\iotaUniv{\lie g}(x)$ for all $x\in \lie g$;
clearly $S\circ S = \id_{\Univ(\lie g)}$, i.e., $S$ is an involution.

For any real vector space $V$, its \emph{complexification} is the direct sum $V_\CC \coloneqq V \oplus \I V$.
We will treat $V$ as a real linear subspace of the complex vector space $V_\CC$ via the inclusion $V \ni v \mapsto v+\I {\,\cdot\,} 0 \in V_\CC$.
Every $\RR$\=/linear map $L \colon V \to W$ into a complex vector space $W$ has a unique $\CC$\=/linear extension
$L_\CC \colon V_\CC \to W$, namely, $L_\CC(v+\I v') \coloneqq L(v) + \I L(v')$ for $v,v'\in V$. Similarly, every $\RR$\=/bilinear
map $B \colon V\times V \to W$ also has an analogous unique $\CC$\=/bilinear extension $B_\CC \colon V_\CC \times V_\CC \to W$.
In particular, Lie brackets and algebra multiplications can be extended $\CC$\=/bilinearly to the complexifications.
The complexification comes with a natural \emph{conjugation map} $\cc{\argument} \colon V_\CC \to V_\CC$, defined
as $\cc{v+\I v'} \coloneqq v-\I v'$ for all $v,v'\in V$. Clearly, $\cc{\argument}$ is an antilinear involution of $V_\CC$.

On the complexification $\lie g_\CC$ of $\lie g$ we define a $^*$\=/involution as $(x+\I y)^* \coloneqq -(\cc{x+\I y}) = -x + \I y$
for $x,y\in \lie g$, then $\lie g_\CC$ is a Lie $^*$\=/algebra and contains $\lie g$ as a real Lie subalgebra.
Similarly, on $\Univ(\lie g)_\CC$ we define a $^*$\=/involution as
$(a + \I b)^* \coloneqq \cc{S(a) + \I S(b)} = S(a) - \I S(b)$ for $a,b\in \Univ(\lie g)$,
then $\Univ(\lie g)_\CC$ is a $^*$\=/algebra and contains $\Univ(\lie g)$ as a real subalgebra.
By $\CC$\=/linear extension of $\iotaUniv{\lie g} \colon \lie g \to \Univ(\lie g) \subseteq \Univ(\lie g)_\CC$ we obtain
the Lie $^*$\=/algebra morphism $\iotaUnivC{\lie g} \colon \lie g_\CC \to \Univ(\lie g)_\CC$.
Note that our definitions of the $^*$\=/involutions on $\lie g_\CC$ and $\Univ(\lie g)_\CC$ ensure that
indeed $\iotaUnivC{\lie g}\bigl((x+\I y)^*\bigr) = \iotaUnivC{\lie g}(x+\I y)^*$ for all $x,y\in \lie g$.
Clearly, $\Univ(\lie g)_\CC$ is generated as a $^*$\=/algebra by $\iotaUnivC{\lie g}(\lie g_\CC)$, so
$\bigl(\Univ(\lie g)_\CC, \iotaUnivC{\lie g}\bigr)$ is an enveloping $^*$\=/algebra of $\lie g_\CC$.

By a \emph{$^*$\=/representation of $\bigl(\Univ(\lie g),\iotaUniv{\lie g}\bigr)$}
we mean a tuple $(\Dom,\pi)$ of a pre-Hilbert space $\Dom$ and an algebra morphism $\pi \colon \Univ(\lie g) \to \Adbar(\Dom)$
such that $\skal{\phi}{\pi(u)\,\psi} = \skal{\pi\bigl(S(u)\bigr)\,\phi}{\psi}$ for all
$\phi,\psi \in \Dom$. This means that the $\CC$\=/linear extension $\pi_\CC \colon \Univ(\lie g)_\CC \to \Adbar(\Dom)$
is a $^*$\=/algebra morphism, i.e., $(\Dom,\pi_\CC)$ is a $^*$\=/representation of $\bigl(\Univ(\lie g)_\CC, \iotaUnivC{\lie g} \bigr)$.
Such a $^*$\=/representation $(\Dom,\pi)$ of $\bigl(\Univ(\lie g),\iotaUniv{\lie g}\bigr)$
is called \emph{integrable} (and \emph{irreducible}) if $(\Dom, \pi_\CC)$ is an integrable (and irreducible)
$^*$\=/representation of $\bigl(\Univ(\lie g)_\CC, \iotaUnivC{\lie g} \bigr)$.
Every $^*$\=/representation $(\Dom,\hat\pi)$ of $\bigl(\Univ(\lie g)_\CC, \iotaUnivC{\lie g} \bigr)$
restricts to a $^*$\=/representation $(\Dom,\hat\pi\at{\Univ(\lie g)})$ of
$\bigl(\Univ(\lie g), \iotaUniv{\lie g} \bigr)$, where $\hat\pi\at{\Univ(\lie g)}(a) \coloneqq \hat\pi(a)$ for all
$a \in \Univ(\lie g) \subseteq \Univ(\lie g)_\CC$.
In this case, the $\CC$\=/linear extension is $(\hat\pi\at{\Univ(\lie g)})_\CC = \hat \pi$,
so $(\Dom,\hat\pi\at{\Univ(\lie g)})$ is integrable (and irreducible) if and only if
$(\Dom,\hat\pi)$ is integrable (and irreducible).

Real ideals on the real associative algebra $\Univ(\lie g)$ with involution $S$ are defined analogously to the
$^*$\=/algebraic case:

\begin{definition} \label{definition:Sreal}
  Let $\lie g$ be a real Lie algebra and $I$ an ideal of $\Univ(\lie g)$.
  We say that $I$ is \emph{real} if the following property holds:
  whenever $k\in \NN$ and $a_1,\dots,a_k \in \Univ(\lie g)$ fulfill $\sum_{j=1}^k S(a_j)\,a_j \in I$,
  then $a_j \in I$ for all $j\in\{1,\dots,k\}$.
\end{definition}

Transfering Theorem~\ref{theorem:main} to real universal enveloping algebras is straightforward.
The key ingredient in the proof are the following parts \ref{item:updown:wedge}, \ref{item:updown:wedgevee}, and \ref{item:updown:vee}:

\begin{lemma} \label{lemma:updown}
  Let $\lie g$ be a real Lie algebra. We introduce the following shorthand notation:
  \begin{itemize}
    \item If $I$ is an ideal of $\Univ(\lie g)$, then we write $I^{\wedge} \coloneqq I + \I I \subseteq \Univ(\lie g)_\CC$.
    \item If $J$ is an ideal of $\Univ(\lie g)_\CC$, then we write $J^{\vee} \coloneqq J \cap \Univ( \lie g) \subseteq \Univ( \lie g)$.
  \end{itemize}
  These two constructions have the following properties:
  \begin{enumerate}
    \item \label{item:updown:wedge}
      If $I$ is a real ideal of $\Univ( \lie g)$, then $I^{\wedge}$ is a real ideal of $\Univ(\lie g)_\CC$.
    \item \label{item:updown:wedgevee}
      For all ideals $I$ of $\Univ( \lie g)$, the identity $(I^\wedge)^\vee = I$ holds.
    \item \label{item:updown:vee}
      If $J$ is a real ideal of $\Univ(\lie g)_\CC$, then $J^{\vee}$ is a real ideal of $\Univ( \lie g)$.
    \item \label{item:updown:veewedge}
      For all ideals $J$ of $\Univ(\lie g)_\CC$, the inclusion $(J^\vee)^\wedge \subseteq J$ holds.
      Moreover, $(J^\vee)^\wedge = J$ if and only if $J$ is stable under conjugation,
      i.e., $J = \cc J$ with $\cc J \coloneqq \set{\cc a}{a\in J}$.
  \end{enumerate}
\end{lemma}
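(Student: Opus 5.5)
The plan is to work directly with the decomposition $\Univ(\lie g)_\CC = \Univ(\lie g) \oplus \I\,\Univ(\lie g)$ as a real vector space. I will use the identities $(a+\I b)^* = S(a) - \I S(b)$ and $\cc{a+\I b} = a - \I b$, together with the observation that conjugation is an antilinear algebra automorphism of $\Univ(\lie g)_\CC$ commuting with the $^*$\=/involution. Before anything else, note that $I^\wedge$ is an ideal of $\Univ(\lie g)_\CC$ (being the complexification of an ideal) and that $J^\vee$ is an ideal of $\Univ(\lie g)$. The order will be: first part~\ref{item:updown:wedgevee} and part~\ref{item:updown:veewedge}, since they are pure linear algebra; then part~\ref{item:updown:vee}; and finally part~\ref{item:updown:wedge}, which I expect to be the only step needing an idea.

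For part~\ref{item:updown:wedgevee}, an element $a+\I b$ with $a,b\in I$ lies in $\Univ(\lie g)$ exactly when $b=0$, so $(I^\wedge)^\vee = I$. For part~\ref{item:updown:veewedge}, $J^\vee \subseteq J$ together with $J$ being complex linear gives $(J^\vee)^\wedge \subseteq J$. Since $(J^\vee)^\wedge$ is stable under conjugation, the forward implication of the equivalence is clear. Conversely, if $\cc J = J$ and $a+\I b \in J$, then $a = \tfrac12\bigl((a+\I b)+\cc{a+\I b}\bigr) \in J \cap \Univ(\lie g)$, and similarly $b \in J^\vee$.

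For part~\ref{item:updown:vee}, take $a_j\in\Univ(\lie g)$ with $\sum_j S(a_j)a_j \in J^\vee \subseteq J$. Because $a_j^* = S(a_j)$ for real elements, this sum equals $\sum_j a_j^*a_j$. Reality of $J$ then gives $a_j \in J\cap\Univ(\lie g)$.

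Part~\ref{item:updown:wedge} is the main obstacle. Take $c_j = a_j+\I b_j$ with $\sum_j c_j^*c_j \in I^\wedge$ and expand
\begin{equation*}
  c_j^*c_j = \bigl(S(a_j)a_j + S(b_j)b_j\bigr) + \I\bigl(S(a_j)b_j - S(b_j)a_j\bigr).
\end{equation*}
An element of $I^\wedge$ has both its real and its imaginary part in $I$, so the real part gives $\sum_j \bigl(S(a_j)a_j + S(b_j)b_j\bigr) \in I$. This is an $S$\=/sum of squares of the $2k$ real elements $a_1,\dots,a_k,b_1,\dots,b_k$. Reality of $I$ therefore yields $a_j, b_j\in I$, hence $c_j \in I^\wedge$. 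The key point is that the cross terms $\I\bigl(S(a_j)b_j - S(b_j)a_j\bigr)$ live entirely in the imaginary part and can be discarded. This is legitimate precisely because $I^\wedge = I\oplus \I I$ respects the real/imaginary splitting, so the hermitian-square condition reduces to a real sum of $2k$ squares.
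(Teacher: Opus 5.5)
Your proposal is correct and follows essentially the same route as the paper's proof. It uses the same real/imaginary expansion of $\sum_j c_j^*c_j$ with the cross terms discarded for part (i), the same direct computations for parts (ii) and (iii), and the same averaging $\tfrac12(u+\cc u)$, $\tfrac1{2\I}(u-\cc u)$ for the converse in part (iv); only the order in which you treat the parts differs.
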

\begin{proof}
  Part~\ref{item:updown:wedge}:
  Let $I$ be a real ideal of $\Univ( \lie g)$, then it is clear that $I^{\wedge}$ is an ideal of $\Univ(\lie g)_\CC$.
  Now we check that $I^{\wedge}$ is real, so let $k\in \NN$ and $u_1, \dots, u_k \in \Univ(\lie g)_\CC$ such that
  $\sum_{j=1}^k u_j^* u_j \in I^{\wedge}$. Then there are uniquely determined $\hat v, \hat w \in \Univ(\lie g)$
  such that $\sum_{j=1}^k u_j^* u_j = \hat v + \I \hat w$, and from $\hat v + \I \hat w \in I^{\wedge} = I + \I I$
  it follows that $\hat v, \hat w \in I$.
  Similarly, for all $j\in\{1,\dots,k\}$ there are $v_j, w_j \in \Univ (\lie g)$ such that $u_j = v_j + \I w_j$. Then
	\begin{equation*}
    \sum_{j=1}^k u_j^* u_j
    =
    \sum_{j=1}^k \bigl( S(v_j) - \I S(w_j) \bigr) (v_j + \I w_j)
    =
    \sum_{j=1}^k \bigl( S(v_j) v_j + S(w_j) w_j \bigr) + \I \sum_{j=1}^k \bigl( S(v_j) w_j - S(w_j) v_j \bigr)
    ,
	\end{equation*}
	and therefore $\sum_{j=1}^k \bigl( S(v_j) v_j + S(w_j) w_j \bigr) = \hat v \in I$.
	As $I$ is real, this implies $v_1,\dots,v_k,w_1,\dots,w_k \in I$, which means that $u_1,\dots,u_k \in I+\I I = I^{\wedge}$.

	Part~\ref{item:updown:wedgevee}:
	Let $I$ be any ideal of $\Univ( \lie g)$, then
	$(I^{\wedge})^{\vee} = \set[\big]{u \in \Univ( \lie g)}{ u + \I{\,\cdot\,}0 \in I + \I I } = I$.

	Part~\ref{item:updown:vee}:
	Let $J$ be a real ideal of $\Univ(\lie g)_\CC$, then it is clear that $J^{\vee}$ is an ideal.
	In order to check that $J^{\vee}$ is real, let
	$k\in \NN$ and $u_1, \dots, u_k \in \Univ(\lie g)$ such that $\sum_{j=1}^k S(u_j) u_j \in J^{\vee}$.
	Note that the $^*$\=/involution of $\Univ(\lie g)_\CC$ and the involution $S$ of $\Univ(\lie g)$
	coincide on the real linear subspace $\Univ(\lie g)$ of $\Univ(\lie g)_\CC$,
	so $\sum_{j=1}^k u_j^* u_j = \sum_{j=1}^k S(u_j) u_j \in J^{\vee} \subseteq J$,
	which implies $u_1, \dots, u_k \in J$ since $J$ is real.
	It follows that $u_1, \dots, u_k \in J \cap \Univ(\lie g) = J^{\vee}$.

	Part~\ref{item:updown:veewedge}:
	Let $J$ be any ideal of $\Univ(\lie g)_\CC$, then $J^{\vee} \subseteq J$ and $\I J^{\vee} \subseteq J$,
	and consequently $(J^{\vee})^{\wedge} = J^{\vee} + \I J^{\vee} \subseteq J$.
	Now assume that $J = (J^{\vee})^{\wedge}$. This means $J = J^{\vee} + \I J^{\vee}$, so $\cc J = J^{\vee} - \I J^{\vee} =
	J^{\vee} + \I J^{\vee} = J$.
	Conversely, assume that $J = \cc J$. It remains to show that in this case $J \subseteq (J^{\vee})^{\wedge}$,
	so let $u \in J$. Then there are uniquely determined $v,w \in \Univ(\lie g)$ such that $u = v+\I w$,
	and the identities $\frac 1 2 (u + \overline u) = v$ and $\frac 1 {2 \I} (u- \overline u) = w$ show that also $v,w \in J$.
	So $v,w \in J\cap \Univ(\lie g) = J^{\vee}$ and $u \in J^{\vee} + \I J^{\vee} = (J^{\vee})^{\wedge}$.
\end{proof}

\begin{theorem} \label{theorem:realUniv}
  Let $\lie g$ be a nilpotent finite-dimensional real Lie algebra and $I$ an ideal of $\Univ(\lie g)$.
  Then the following are equivalent:
  \begin{enumerate}
    \item \label{item:realUniv:real} $I$ is real,
    \item \label{item:realUniv:ker} there is a set $X$ of irreducible integrable $^*$\=/representations of
      $\bigl(\Univ(\lie g), \iotaUniv{\lie g}\bigr)$ such that
      \begin{equation}
        \label{eq:theorem:realUniv}
        I = \bigcap_{(\Dom,\pi)\in X} \ker(\pi),
      \end{equation}
      where, by convention, the empty intersection is $\bigcap_{(\Dom,\pi)\in \emptyset} \ker(\pi) \coloneqq \Univ(\lie g)$.
  \end{enumerate}
\end{theorem}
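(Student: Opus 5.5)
The plan is to reduce both directions to Theorem~\ref{theorem:main}, applied to the complexification $\bigl(\Univ(\lie g)_\CC, \iotaUnivC{\lie g}\bigr)$, using the transfer maps $I\mapsto I^\wedge$ and $J\mapsto J^\vee$ from Lemma~\ref{lemma:updown}. First, $\lie g_\CC$ is a nilpotent finite-dimensional Lie $^*$\=/algebra, because its brackets are the $\CC$\=/bilinear extensions of those of $\lie g$ and so iterated $\ad$'s vanish after the same number of steps. Hence the real Nullstellensatz holds for $\lie g_\CC$, and in particular for the enveloping $^*$\=/algebra $\bigl(\Univ(\lie g)_\CC,\iotaUnivC{\lie g}\bigr)$.

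For \ref{item:realUniv:ker}~$\implies$~\ref{item:realUniv:real}, take a $^*$\=/representation $(\Dom,\pi)$ of $\bigl(\Univ(\lie g),\iotaUniv{\lie g}\bigr)$. Then $\ker(\pi_\CC)$ is a real ideal of $\Univ(\lie g)_\CC$, since $\Adbar(\Dom)$ is formally real. Moreover $\ker(\pi) = \ker(\pi_\CC)\cap\Univ(\lie g) = \ker(\pi_\CC)^\vee$, so $\ker(\pi)$ is real by part~\ref{item:updown:vee} of Lemma~\ref{lemma:updown}. It remains to note that an intersection of real ideals of $\Univ(\lie g)$ is real, and that the empty intersection $\Univ(\lie g)$ is trivially real; both are immediate from Definition~\ref{definition:Sreal}.

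For \ref{item:realUniv:real}~$\implies$~\ref{item:realUniv:ker}, let $I$ be real. By part~\ref{item:updown:wedge} of Lemma~\ref{lemma:updown}, $I^\wedge$ is a real ideal of $\Univ(\lie g)_\CC$. Theorem~\ref{theorem:main} then gives a set $Y$ of irreducible integrable $^*$\=/representations $(\Dom,\hat\pi)$ of $\bigl(\Univ(\lie g)_\CC,\iotaUnivC{\lie g}\bigr)$ with $I^\wedge=\bigcap_{(\Dom,\hat\pi)\in Y}\ker(\hat\pi)$. Set $X\coloneqq\set{(\Dom,\hat\pi\at{\Univ(\lie g)})}{(\Dom,\hat\pi)\in Y}$. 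As noted before Definition~\ref{definition:Sreal}, each restriction is an irreducible integrable $^*$\=/representation of $\bigl(\Univ(\lie g),\iotaUniv{\lie g}\bigr)$, because its $\CC$\=/linear extension is $\hat\pi$ again. Since $\ker(\hat\pi\at{\Univ(\lie g)})=\ker(\hat\pi)\cap\Univ(\lie g)$, taking $\vee$ and using part~\ref{item:updown:wedgevee} of Lemma~\ref{lemma:updown} yields
\[
  \bigcap_{(\Dom,\pi)\in X}\ker(\pi)=\Bigl(\bigcap_{(\Dom,\hat\pi)\in Y}\ker(\hat\pi)\Bigr)\cap\Univ(\lie g)=(I^\wedge)^\vee=I .
\]
The empty case $I=\Univ(\lie g)$ is consistent with this, since then $Y=\emptyset$ is allowed and both conventions give the whole algebra.

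There is no real obstacle; the only points needing care are that restriction and $\CC$\=/linear extension of representations are mutually inverse, which the paper has already observed, and that intersections commute with $\cap\,\Univ(\lie g)$.
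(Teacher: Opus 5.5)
Your proposal is correct and follows the paper's proof essentially step for step. You complexify, apply Theorem~\ref{theorem:main} to the real ideal $I^\wedge$, restrict the resulting representations to $\Univ(\lie g)$, and recover $I=(I^\wedge)^\vee$ via Lemma~\ref{lemma:updown}; for the converse you use $\ker(\pi)=\ker(\pi_\CC)^\vee$ together with part~\ref{item:updown:vee} and the fact that intersections of real ideals are real.
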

\begin{proof}
  First assume that $I$ is real. By part~\ref{item:updown:wedge} of Lemma~\ref{lemma:updown},
  $I^{\wedge}$ is a real ideal of $\Univ(\lie g)_\CC$. Recall that $\bigl(\Univ(\lie g)_\CC, \iotaUnivC{\lie g}\bigr)$
  is an enveloping $^*$\=/algebra of $\lie g_\CC$ and that the real Nullstellensatz holds for $\lie g_\CC$ by Theorem~\ref{theorem:main}
  (clearly, if $\lie g$ is nilpotent, then $\lie g_\CC$ is also nilpotent).
  Therefore, there is a set $Y$ of irreducible integrable $^*$\=/representations of $\bigl(\Univ(\lie g)_\CC, \iotaUnivC{\lie g}\bigr)$
  such that $I^{\wedge} = \bigcap_{(\Dom,\pi)\in Y} \ker(\pi)$.

  Consider any $(\Dom,\pi) \in Y$. By restriction to $\Univ(\lie g) \subseteq \Univ(\lie g)_\CC$
  we obtain an irreducible integrable $^*$\=/representation $\bigl(\Dom,\pi\at{\Univ(\lie g)}\bigr)$
  of $\bigl(\Univ(\lie g), \iotaUniv{\lie g} \bigr)$, and $\ker (\pi\at{\Univ(\lie g)}) = \ker(\pi) \cap \Univ(\lie g)$.
  By part~\ref{item:updown:wedgevee} of Lemma~\ref{lemma:updown},
  \begin{equation*}
    I
    =
    (I^{\wedge})^{\vee}
    =
    \biggl( \bigcap_{(\Dom,\pi)\in Y} \ker(\pi) \biggr) \cap \Univ(\lie g)
    =
    \bigcap_{(\Dom,\pi)\in Y} \bigl( \ker(\pi) \cap \Univ(\lie g) \bigr)
    =
    \bigcap_{(\Dom,\pi)\in Y} \ker \bigl(\pi\at{\Univ(\lie g)}\bigr)
    .
  \end{equation*}

  So set $X \coloneqq \set{(\Dom,\pi\at{\Univ(\lie g)})}{(\Dom,\pi) \in Y}$,
  then $X$ is a set of irreducible integrable $^*$\=/representations of $\bigl( \Univ(\lie g), \iotaUniv{\lie g}\bigr)$
  such that \eqref{eq:theorem:realUniv} holds. This proves the implication \ref{item:realUniv:real}~$\implies$~\ref{item:realUniv:ker}.

  Conversely, assume there is a set $X$ of irreducible integrable $^*$\=/representations of
  $\bigl(\Univ(\lie g), \iotaUniv{\lie g}\bigr)$ such that \eqref{eq:theorem:realUniv} holds.
  For every $^*$\=/representation $(\Dom,\pi)$ of $\bigl(\Univ(\lie g), \iotaUniv{\lie g}\bigr)$,
  the kernel $\ker(\pi)$ is real because $\ker(\pi) = \ker(\pi_\CC) \cap \Univ(\lie g)$
  and because the restriction $\ker(\pi_\CC) \cap \Univ(\lie g) = \ker(\pi_\CC)^{\vee}$ of the real ideal $\ker(\pi_\CC)$ is real
  by part~\ref{item:updown:vee} of Lemma~\ref{lemma:updown}.
  It is easy to check that an arbitrary intersection of real ideals of $\Univ(\lie g)$ is real,
  so \eqref{eq:theorem:realUniv} implies that $I$ is real.
  This proves the implication \ref{item:realUniv:ker}~$\implies$~\ref{item:realUniv:real}.
\end{proof}

Finally, we note that one cannot deduce Theorem~\ref{theorem:main} from Theorem~\ref{theorem:realUniv}
by the above argument, mutatis mutandis, because in part~\ref{item:updown:veewedge} of Lemma~\ref{lemma:updown},
equality holds only for ideals that are stable under conjugation.
This effect can already be observed in the $1$-dimensional case:

\begin{example}
	Let $\lie g \coloneqq \RR$ be the abelian $1$-dimensional Lie algebra, generated by one element $x$.
	Then $\Univ(\lie g) \cong \RR[x]$, and the involution $S$ of $\Univ(\lie g)$ corresponds to the involutive
	(anti-)automorphism of $\RR[x]$ that maps $x$ to $-x$.
	As $\RR$ is abelian, the irreducible integrable $^*$\=/representations of $\Univ(\lie g)$ are $1$\=/dimensional
	(see Section~\ref{sec:integrability}).	Explicitly,
	they are given by evaluation at $\I \lambda$ for some $\lambda \in \RR$, i.e., by $\delta_{\I \lambda} \colon \RR[x] \to \CC$,
	\begin{equation*}
    p \mapsto \delta_{\I \lambda}(p) \coloneqq p(\I \lambda)
    .
	\end{equation*}
  Note that $\cc{p(\I \lambda)} = p(-\I \lambda)$ for all $p \in \RR[x]$,
  and therefore $\ker(\delta_{\I \lambda}) = \ker(\delta_{-\I \lambda})$.
  On the complexifications, however, $\ker\bigl( (\delta_{\I \lambda})_\CC\bigr) \neq \ker\bigl( (\delta_{-\I \lambda})_\CC \bigr)$
  for $\lambda \neq 0$,
  e.g., $(\delta_{\I \lambda})_\CC(x-\I \lambda) = 0$ while $(\delta_{-\I \lambda})_\CC(x-\I \lambda) = -2\I \lambda$.
  This demonstrates how $\RR[x]$ and $\Univ(\lie g)$ have less (real) ideals than their complexifications,
  as indicated by part~\ref{item:updown:veewedge} of Lemma~\ref{lemma:updown}. Therefore,
  the real Nullstellensatz Theorem~\ref{theorem:main} for $\Univ(\lie g)_\CC$ is stronger, i.e., it
  applies to more ideals than the real Nullstellensatz Theorem~\ref{theorem:realUniv} for $\Univ(\lie g)$.
\end{example}

\end{onehalfspace}


\end{document}